\documentclass{article}

\usepackage[text={6.2in, 7.9in},centering]{geometry}
\setlength{\topmargin}{-0.25in}

\usepackage{amsmath,xcolor,cite}
\usepackage{bm,amssymb,mathrsfs,url,units,dsfont}
\usepackage{setspace,bbold}
\usepackage{graphicx,cite,mathtools}
\usepackage{amssymb,mathrsfs,paralist,bm,esint,setspace}
\RequirePackage{amsfonts,amsthm,amsmath,color}
\usepackage{utopia}
\definecolor{ForestGreen}{RGB}{20,140,80}
\RequirePackage[colorlinks,citecolor=ForestGreen,urlcolor=blue,linkcolor=blue]{hyperref}

\usepackage{amssymb}
\usepackage{esint,comment}
\usepackage{mathrsfs}
\usepackage{mathtools}
\usepackage{hyperref}
\usepackage{amsthm}
\usepackage{amsmath}
\usepackage{bm}
\usepackage{enumerate}
\usepackage{tikz-cd,soul}
\usepackage[normalem]{ulem}
\usepackage{bbold}
\usepackage{indentfirst}
\usepackage{titlesec}

\titlelabel{\thetitle.\,\,}

% \startlocaldefs

\numberwithin{equation}{section}
\theoremstyle{plain}

\newtheorem{theorem}{Theorem}[section]
\newtheorem{lemma}[theorem]{Lemma}

\newtheorem{remark}[theorem]{\bf{Remark}}
\newtheorem{assumption}[theorem]{Assumption}

\newtheorem{definition}[theorem]{Definition}

\newtheorem{proposition}[theorem]{Proposition}

\theoremstyle{remark}
\theoremstyle{definition}

\newcommand\1{\mathds{1}}

%mathds
\newcommand\dP{\mathds{P}}
\newcommand\dR{\mathds{R}}
\newcommand\dE{\mathds{E}}
\newcommand\dZ{\mathds{Z}}
\newcommand\dN{\mathds{N}}

% mathbb

\newcommand\bI{{\bf I}}

\newcommand\bM{{\bf M}}

\newcommand\bN{{\bf N}}

\newcommand\bJ{{\bf J}}

%mathbf

\newcommand\fM{\mathfrak{M}}

%mathcal

\newcommand\cF{\mathcal{F}}

\newcommand\cP{\mathcal{P}}

\newcommand\cS{\mathcal{S}}

\newcommand\cN{\mathcal{N}}

% bf

% mathfrak

\newcommand\fA{\mathfrak{A}}

\newcommand\fr{\mathfrak{r}}

\newcommand\fJ{\mathfrak{J}}
\newcommand\fR{\mathfrak{R}}
\newcommand\fd{\mathsf{d}}
\newcommand\fg{\mathfrak{g}}

\newcommand\cbrk{\text{$]$\kern-.15em$]$}}
\newcommand\opar{\text{\,\raise.2ex\hbox{${\scriptstyle
|}$}\kern-.34em$($}}
\newcommand\cpar{\text{$)$\kern-.34em\raise.2ex\hbox{${\scriptstyle |}$}}\,}
\newcommand\supp{\mathrm{supp}}

% \newcommand{\mysection}[1]{\section{#1}}
%mathfrak

%%%%%%%%%%%%%%%%%%%%%%%
%%%%%%%%%%%%%%%%%%%%%%%
%% Elsevier bibliography styles
%%%%%%%%%%%%%%%%%%%%%%%
%% To change the style, put a % in front of the second line of the current style and
%% remove the % from the second line of the style you would like to use.
%%%%%%%%%%%%%%%%%%%%%%%

%% Numbered
%\bibliographystyle{model1-num-names}

%% Numbered without titles
%\bibliographystyle{model1a-num-names}

%% Harvard
%\bibliographystyle{model2-names.bst}\biboptions{authoryear}

%% Vancouver numbered
%\usepackage{numcompress}\bibliographystyle{model3-num-names}

%% Vancouver name/year
%\usepackage{numcompress}\bibliographystyle{model4-names}\biboptions{authoryear}

%% APA style
%\bibliographystyle{model5-names}\biboptions{authoryear}

%% AMA style
%\usepackage{numcompress}\bibliographystyle{model6-num-names}

%% `Elsevier LaTeX' style
% \bibliographystyle{elsarticle-num}
%%%%%%%%%%%%%%%%%%%%%%%

\begin{document}

\title{On the support of solutions to nonlinear stochastic heat equations}

%% Group authors per affiliation:
\author{Beom-Seok Han
% \footnote{The first author was partially supported by the National Research Foundation of Korea (NRF) grant
% funded by the Korea government (MSIT) (No. NRF-2021R1C1C2007792).}
\and Kunwoo Kim  
\and Jaeyun Yi 
}

\date{\today}

\maketitle

\begin{abstract}  
We investigate the strict positivity and the compact support property of solutions to the one-dimensional nonlinear stochastic heat equation: $$\partial_t u(t,x) = \frac{1}{2}\partial^2_x u(t,x) + \sigma(u(t,x))\dot{W}(t,x), \quad (t,x)\in \dR_+\times\dR,$$ with nonnegative and compactly supported initial data $u_0$, where $\dot{W}$ is the space-time white noise and $\sigma:\mathds{R} \to \mathds{R} $ is a continuous function with $\sigma(0)=0$. We prove that (i) if $v/ \sigma(v)$ is sufficiently large near  $v=0$, then the solution $u(t,\cdot)$ is strictly positive for all $t>0$, and (ii) if $v/\sigma(v)$ is sufficiently small near $v= 0$, then the solution $u(t,\cdot)$ has compact support for all $t>0$. These findings extend previous results concerning the strict positivity and the compact support property, which were analyzed only for the case $\sigma(u)\approx u^\gamma$ for $\gamma>0$. Additionally, we establish the uniqueness of a solution and the weak comparison principle in case (i).

\vspace{1cm} 
 
\noindent{\it Keywords:} 
Stochastic heat equation, strict positivity, compact support property\\
	
\noindent{\it \noindent MSC 2020 subject classification:} 60H15, 35R60
\end{abstract}

% \setcounter{tocdepth}{2}
% {
%  \hypersetup{linkcolor=black}
%  \tableofcontents
% }

%%%%%%%%%%%%%%%%%%%%%%%%%%%%%%%%%%%%%%%%%%%%%%%%%%%%%%%%%%%%%%%%%%%%%%%%%%%%%%%%%%%%%
\section{Introduction}\label{sec: introduction}
We consider the one-dimensional stochastic heat equation (SHE) given by

\begin{equation}\label{eq:SHE}
  \begin{aligned}
    \begin{cases}
      \partial_t u(t,x) = \frac{1}{2}\partial^2_x u(t,x) + \sigma(u(t,x))\dot{W}(t,x), \quad(t,x)\in \dR_+ \times \dR,\\
      u(0,x)  = u_0(x), \quad x\in\dR,
    \end{cases}
  \end{aligned}
\end{equation}
where $\dot W$ represents space-time (Gaussian) white noise on $\dR_+\times \dR$, $u_0$ is a sufficiently smooth and compactly supported function, and $\sigma: \dR \to \dR$ is a continuous function with $\sigma(0)=0$. The principal goal  is to explore how the nonlinearity $\sigma(u)$ influences the support of solutions to \eqref{eq:SHE}. Specifically, we are interested in conditions on $\sigma$  under which  solutions to \eqref{eq:SHE}  exhibit either full support (i.e. the solution is strictly positive everywhere) or compact support with probability 1.

When $\sigma(u)$ is proportional to $u^\gamma$ near $u=0$ for $\gamma>0$, there is an extensive literature that  provides a detailed understanding of how the support of the solution is influenced by the value of $\gamma$.  In \cite{iscoe1988supports}, Iscoe was among the first to demonstrate that for  $\gamma=1/2$,  a solution to \eqref{eq:SHE} exhibits compact support for all $t>0$ with probability 1, a property we refer to as the Compact Support Property (CSP). Note that for $\gamma=1/2$, $u$ is considered as the density of super-Brownian motion and  Iscoe  used some properties of super-Brownian motion to show CSP. Following this, Shiga \cite{shiga1994two} extended Iscoe's methodology to establish CSP for \eqref{eq:SHE} with $\gamma \in (0, 1/2)$. On the other hand,  Mueller and Perkins \cite{mueller1992compact} considered the case where $\gamma \in (1/2, 1)$. This particular case is more delicate and interesting  because this case can be considered as a super-Brownian motion with reduced birth-death rates (i.e. the coefficient of the noise $u^\gamma$ is effectively less than $u^{1/2}$ near $u=0$). By creatively  constructing and employing a historical process, they proved that CSP remains valid in this delicate case. Diverging from these approaches, Krylov \cite{krylov1997result} used  a weak solution framework to \eqref{eq:SHE} to prove CSP for $\gamma \in (0, 1)$. In contrast, Mueller \cite{mueller1991support} investigated the case $\gamma \geq 1$, demonstrating that $u(t, x)$ remains strictly positive for all $t>0$ and $x\in \dR$ with probability 1, thereby indicating the absence of CSP in these cases. This background leads us to the following natural  questions: 
\begin{itemize}
\item  What outcomes can be expected  for $\sigma$ satisfying $u \ll \sigma(u) \ll u^\gamma$ when $u$ is near zero?  In particular, under what conditions can $\sigma$ ensure  CSP or guarantee strict positivity of solutions to \eqref{eq:SHE}?
\end{itemize}
Those questions have previously been addressed within the context of deterministic partial differential equations (PDEs).  More precisely, consider   \begin{equation}\label{eq:HE}
  \begin{aligned}
    \begin{cases}
      \partial_tu(t,x) = \frac{1}{2}\partial^2_x u(t,x) - \sigma(u(t,x)), \quad(t,x)\in \dR_+ \times \dR,\\
      u(0,x) = u_0(x), \quad x\in\dR. 
    \end{cases}
  \end{aligned}
\end{equation}
where  $u_0$ is a non-negative and continuous function with compact support, and  $\sigma:\dR \to \dR_+$ is a  non-decreasing and continuous  function  with $\sigma(0)=0$ and $\sigma(u)>0$ for all $u\in \dR\setminus\{0\}$.   In 1974, Kalashnikov \cite[Theorem 7]{kalashnikov1974propagation} proved that CSP holds  for  \eqref{eq:HE} if 
\begin{equation}\label{eq:condition for CSP of deterministic PDE}
  \int_0^1 \frac{du}{\sqrt{u\sigma(u)}} < +\infty. %  \int_0^1 \frac{1}{\sqrt{\int_0^v  \sigma(u)\, du }}<\infty.
\end{equation}  
Furthermore,  in \cite[Theorem 11]{kalashnikov1974propagation}, strict positivity  of the solution to \eqref{eq:HE} is shown to hold    when 
\begin{equation}\label{eq:condition for strict positivity of deterministic PDE}
  \int_0^1 \frac{du}{\sigma(u)} = +\infty,
\end{equation} when $u_0(x)>0$ for all $x\in \dR$. On the other hand, \cite[Remark 2.4]{evans1979instantaneous} shows that if
\begin{equation}\label{eq:evans-condition}
\int_0^1 \frac{du}{\sqrt{u\sigma(u)}} = \infty,
\end{equation}
then there exist initial functions $u_0$ such that the solution $u(t, x)$ to \eqref{eq:HE} with initial condition $u(0, x) = u_0(x)$ is strictly positive for all $x \in \dR$ and all $t\in [0, t_0]$ for some $t_0>0$. Numerous studies have examined CSP and strict positivity in deterministic PDEs, establishing conditions for these properties (see, e.g. \cite{chen1995finite,evans1979instantaneous,galaktionov1994extinction,knerr1979behavior} for related results). However, to the best of our knowledge,  literature on conditions for CSP and strict positivity  for stochastic heat equations other than the case where $\sigma(u) \approx u^\gamma$ is extremely limited. This paper aims to bridge this gap by proposing specific  conditions on $\sigma$ that ensure  CSP or strict positivity. We now detail these conditions below.

\begin{assumption}\label{assumption:basic condition for sigma} 
  We assume that  $\sigma:\dR\to \dR_+$ is a nondecreasing continuous function with $\sigma(u)\1_{u\leq 0} = 0$ for $u\in \dR$. We further assume that there exist constants  $\fd>0$ and $L_\fd>0$  such that
  \begin{itemize}
        \item [(a)] For all $u,v>0$ with $u\leq v\leq \fd$, we have $0<\frac{\sigma(v)}{v}\leq\frac{\sigma(u)}{u}$,
        \item [(b)] $|\sigma(u)| \leq L_{\fd} |u| $ and $|\sigma(u)-\sigma(v) | \leq L_{\fd}|u-v|$ for all $u,v\geq \fd.$\label{item:third condition of sigma}
  \end{itemize} 
\end{assumption}

\begin{remark}
  The assumption that $\sigma(u)\1_{u\leq 0 }=0$ is somewhat technical and can be weakened. Throughout the paper, we only deal with nonnegative solutions to \eqref{eq:SHE} (see Theorem \ref{thm:weak comparison principle} and Theorem \ref{thm:cpt_support}). Therefore, we only need to define $\sigma(u)$ for $u\geq0$ and set $\sigma(0)=0$. 
\end{remark}

We note that Assumption \ref{assumption:basic condition for sigma}  guarantees the existence of a (\emph{mild and weak}) solution.  However, the uniqueness is a tricky problem (see \cite{burdzy2010nonuniqueness, mytnik2011pathwise,mytnik2006pathwise}).  Under some additional condition on $\sigma$, we can obtain the uniqueness of a mild solution and the strict positivity of the solution.

\begin{theorem}[\bf Well-posedness and Strict positivity]\label{thm:strict positivity}
Suppose $\sigma:\dR\to \dR$ satisfies Assumption \ref{assumption:basic condition for sigma} and there exist $\alpha\in(0,\frac{1}{4})$ and $v_1 = v_1(\alpha)>0$ such that for all $v\in (0,v_1)$ 
  \begin{equation}\label{eq:positivity_condition} 
    \frac{v}{\sigma(v)} \geq (-\log v)^{-\alpha}. 
  \end{equation}
  Let $u_0\in C(\dR)$ be bounded, nonnegative and not identically zero. Then, we have the following: 
\begin{itemize}
\item[(i)] There exists a unique mild solution $u$ to \eqref{eq:SHE}, and $u$ has a version which is space-time H\"older continuous of regularity $\left(\frac{1}{2}-, \frac{1}{4}-\right)$.
\item[(ii)] For any $T>0$, 
  \begin{equation*}
    \dP \left( u(t,x) >0 \text{ for all }(t,x)\in(0,T) \times \dR\right) =1.
  \end{equation*}
\end{itemize} 
\end{theorem}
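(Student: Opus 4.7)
Assumption~\ref{assumption:basic condition for sigma}\,(a) at $v=\fd$ gives $\sigma(u)\leq(\sigma(\fd)/\fd)u$ on $[0,\fd]$, while (b) extends linear growth to $u\geq\fd$, so $\sigma$ has global linear growth on $[0,\infty)$. A mild (and weak) solution then exists by a Lipschitz-truncation-plus-tightness argument: approximate $\sigma$ by Lipschitz $\sigma_n$, solve each approximation via Walsh's theorem, and pass to a weak limit following the Gy\"ongy--Krylov method. The H\"older regularity $(\tfrac12-,\tfrac14-)$ follows by Kolmogorov's criterion applied to the mild formulation
\begin{equation*}
u(t,x) = (G_t*u_0)(x) + \int_0^t\!\!\int_\dR G_{t-s}(x-y)\sigma(u(s,y))\,W(ds\,dy),
\end{equation*}
using BDG with the standard heat-kernel increment bounds. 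For pathwise uniqueness, (a) combined with \eqref{eq:positivity_condition} supplies the modulus $|\sigma(u)-\sigma(v)|\leq C|u-v|(\log(1/|u-v|))^\alpha$ on $[0,\fd]^2$, and (b) gives a Lipschitz bound on $[\fd,\infty)^2$; since the effective H\"older exponent of $\sigma$ is $1$ (up to a logarithmic correction), well above the Mytnik--Perkins threshold $3/4$, an SPDE-adapted Yamada--Watanabe scheme closes the uniqueness estimate.

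\textbf{Plan for (ii).} I would adapt Mueller's \cite{mueller1991support} propagation-of-positivity scheme to this non-linear setting. By continuity of $u$ from (i) and a countable union, it suffices to show
\begin{equation*}
\dP\Bigl(\inf_{(t,x)\in[t_0,T]\times[-R,R]}u(t,x)>0\Bigr)=1
\end{equation*}
for each $0<t_0<T$ and $R>0$. The plan is: (1) prove a one-step quantitative lower bound --- conditional on $u(t_0,x_0)=\delta$, the infimum of $u(t_0+h,\cdot)$ over a $\sqrt h$-ball at $x_0$ is at least $\tfrac12\delta$ except on an event of probability at most $F(\delta,h)$, obtained from BDG after bounding $\sigma(u)\leq u(\log(1/u))^\alpha$ via \eqref{eq:positivity_condition}; (2) iterate on a dyadic space--time grid with thresholds $\delta_n=2^{-n}\delta_0$ and suitable time-steps $h_n$; and (3) conclude by Borel--Cantelli --- convergence of the resulting probability series together with summability of the time-steps both demand $\alpha<\tfrac14$, matching the $\tfrac14$-H\"older exponent of $u$ from (i).

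\textbf{Main obstacle.} The delicate step is (1). On the event that $u$ is small, the noise amplitude $\sigma(u)$ is comparable to $u$ up to logarithmic factors, precluding any na\"ive comparison with a linear-noise SHE. I would handle this by a stopping-time localisation: set
\begin{equation*}
\tau:=\inf\{t\geq t_0:\,u(t,y)\leq\tfrac14\delta\text{ for some }|y-x_0|\leq 2\sqrt h\},
\end{equation*}
freeze $\sigma(u)$ at level $\tau$, and control the stopped stochastic integral by BDG together with \eqref{eq:positivity_condition}. The constant $\tfrac14$ in \eqref{eq:positivity_condition} is sharp for this scheme: $\alpha<\tfrac14$ is precisely what balances the logarithmic noise growth against the intrinsic $\tfrac14$-temporal regularity of $u$.
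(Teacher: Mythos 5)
The proposal is an outline, not a full proof, but its direction is broadly compatible with the paper. Here is a comparison of the two routes and of the places where your sketch is not yet tight.

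\textbf{Part (i).} Your log-Lipschitz modulus $|\sigma(u)-\sigma(v)|\lesssim|u-v|\bigl(\log\tfrac{1}{|u-v|}\bigr)^\alpha$ near the origin is correct (it follows from Lemma~\ref{lem:property of sigma} and \eqref{eq:positivity_condition}). However, the paper does not go through Gy\"ongy--Krylov tightness plus Yamada--Watanabe/Mytnik--Perkins. Instead, it truncates $\sigma$ to a Lipschitz $\sigma^{(m)}$ with Lipschitz constant $m\sigma(m^{-1})\leq(\log m)^\alpha$, proves the moments of $u^{(m)}$ are bounded uniformly in $m$ (Lemma~\ref{lem:uniform moment bound for u^m}), and then shows $u^{(m)}$ is Cauchy in $L^2(\Omega)$ by a direct Gr\"onwall iteration (Proposition~\ref{prop:existence of strong mild solution}). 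The crucial point is that the Gr\"onwall constant grows like $\exp\bigl(C(\log m)^{4\alpha}\bigr)$, which is subpolynomial in $m$ precisely when $\alpha<\tfrac14$, and hence can be absorbed against the truncation error $\sigma(m^{-1})\lesssim m^{-1}(\log m)^\alpha$. Uniqueness follows by running the same Gr\"onwall computation on $u-v$. This is substantially more elementary and self-contained than invoking a pathwise-uniqueness machinery designed for H\"older exponents between $\tfrac34$ and $1$. Your route is plausible, but it is heavier machinery than the problem requires, and you would still need the Gr\"onwall-type moment control anyway to justify passage to the limit.

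\textbf{Part (ii).} In spirit both arguments propagate positivity forward, but the iteration schemes are genuinely different. The paper fixes $m$, iterates over $m$ time-slabs of length $T/(2m)$, propagates the lower bound $\eta^{k+1}$ on spatially growing intervals, and then sends $m\to\infty$; the success probability is $(1-\fg(m))^m$ with $\fg(m)\lesssim\exp\bigl(-Cm^{1/2-2\alpha}(\log m)^{3/2}\bigr)$, and $\alpha<\tfrac14$ is exactly what makes $m\fg(m)\to0$. Your dyadic Borel--Cantelli scheme is closer to Mueller's original, from which the paper explicitly distances itself. In a dyadic scheme the time-steps must be summable while the threshold decays geometrically, so the quantitative balance differs from the paper's; you have not actually computed $F(\delta_n,h_n)$, so the claim that the Borel--Cantelli series converges if and only if $\alpha<\tfrac14$ is not established. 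Two further technical points you will need to fix: (a) conditioning on the pointwise value $u(t_0,x_0)=\delta$ is not what the Markov property provides; the evolution depends on the full spatial profile at time $t_0$, and the correct move (as in the paper) is to restart from the profile lower bound $u(t_0,\cdot)\geq\eta^k\mathds{1}_S$ and invoke the weak comparison principle (Theorem~\ref{thm:weak comparison principle}); (b) to keep the one-step estimate's constants uniform as the threshold shrinks, the paper rescales the amplitude by $\eta^k$, replacing $\sigma$ with $\sigma_k(u)=\eta^{-k}\sigma(\eta^k u)$, and uses Assumption~\ref{assumption:basic condition for sigma}(a) to check $\sigma_k$ satisfies the same hypotheses uniformly in $k$. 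Without this rescaling, your function $F(\delta,h)$ degrades as $\delta\to0$ in a way your sketch does not track.
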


To establish Theorem \ref{thm:strict positivity}, we employ the weak comparison principle. This principle plays a crucial role in our proofs and is also of independent interest.

\begin{theorem}[\bf Weak comparison principle]\label{thm:weak comparison principle}
  Let $u^{(1)}$ and $u^{(2)}$ be two solutions to \eqref{eq:SHE} with initial data $u^{(1)}_{0}\in L^\infty(\dR)$ and $u^{(2)}_{0}\in L^\infty(\dR)$ respectively, with $\sigma$ satisfying Assumption~\ref{assumption:basic condition for sigma}. Suppose that \eqref{eq:positivity_condition} holds and $u^{(1)}_{0}(x)\geq u^{(2)}_{0}(x)$ for all $x\in\dR$. Then we have for any $T>0$ 
  \begin{equation*}
    \dP\left(u^{(1)}(t,x)\geq u^{(2)}(t,x) \text{ for all }(t,x) \in (0,T) \times \dR  \right) =1.
  \end{equation*} In particular, the solution $u$ in Theorem \ref{thm:strict positivity} is nonnegative if $u_0 \in L^\infty(\dR)$ is nonnegative.
\end{theorem}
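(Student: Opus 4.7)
My plan is a Yamada--Watanabe-style smoothing argument applied to the mild form of the SHE, in the spirit of Shiga's comparison proof for $\sigma(u)\asymp u^\gamma$, $\gamma\in[\tfrac12,1]$, with the H\"older modulus replaced by the ``logarithmic'' one that \eqref{eq:positivity_condition} supplies. Set $w := u^{(2)}-u^{(1)}$; the goal is to show $\dE\int w^+(t,x)\psi(x)\,dx = 0$ for every nonnegative $\psi\in C_c^\infty(\dR)$ and every $t\in(0,T)$, which then forces $w^+\equiv 0$ a.s.

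First I would extract from Assumption~\ref{assumption:basic condition for sigma}(a) and \eqref{eq:positivity_condition} the pointwise inequality
\begin{equation*}
  0\leq \sigma(b)-\sigma(a)\leq C\,(b-a)\,\bigl(1+\log(1/(b-a))\bigr)^\alpha \qquad \text{for all }0\leq a\leq b\leq\fd_0,
\end{equation*}
for some $\fd_0\in(0,\fd]$ and $C>0$, by case analysis: $a=0$ uses $\sigma(b)\leq b(\log(1/b))^\alpha$; $a\geq b/2>0$ uses $\sigma(b)-\sigma(a)\leq(b-a)\sigma(a)/a$ together with $\log(1/a)\leq \log(1/(b-a))+O(1)$; and $0<a<b/2$ uses $\sigma(b)-\sigma(a)\leq\sigma(b)$ with $b\leq 2(b-a)$. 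For $b>\fd_0$, Assumption~\ref{assumption:basic condition for sigma}(b) supplies a genuine Lipschitz bound. Next I would build Yamada--Watanabe approximants: since $\int_{0^+}\!dx/(x^2(\log(1/x))^{2\alpha})=+\infty$ (evident via $y=\log(1/x)$), pick $0<a_n\downarrow 0$ with $a_0=1$ and $\int_{a_n}^{a_{n-1}}\!dx/(x^2(\log(1/x))^{2\alpha})=n$, a smooth $\rho_n\geq 0$ supported in $(a_n,a_{n-1})$ with $\int\rho_n=1$ and $\rho_n(x)\leq 2/(nx^2(\log(1/x))^{2\alpha})$, and set $\phi_n(x):=\int_0^x(x-y)\rho_n(y)\,dy$ for $x\geq 0$ (and $0$ otherwise), so that $\phi_n\uparrow x^+$, $\phi_n'\in[0,1]$, and $\phi_n''=\rho_n$.

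With $J_\ep$ a standard spatial mollifier I would apply It\^o's formula to $\phi_n(w*J_\ep(t,x))$ for each fixed $x$, integrate against $\psi(x)$, and let $\ep\downarrow 0$. After dropping the nonpositive initial-datum contribution and the mean-zero martingale, and after integrating the heat term by parts twice, one obtains
\begin{equation*}
  \dE\!\int\!\phi_n(w(t,x))\,\psi(x)\,dx \;\leq\; \tfrac12\!\int_0^t\!\dE\!\int\phi_n(w)\,|\psi''|\,dx\,ds + \tfrac12\dE\!\int_0^t\!\!\int\rho_n(w)\,\bigl[\sigma(u^{(2)})-\sigma(u^{(1)})\bigr]^2\psi\,dx\,ds.
\end{equation*}
On $\{\rho_n(w)\neq 0\}$ one has $w\in[a_n,a_{n-1}]\subset(0,\fd_0]$ for $n$ large, so the modulus estimate gives $[\sigma(u^{(2)})-\sigma(u^{(1)})]^2\leq Cw^2(\log(1/w))^{2\alpha}$; combined with the pointwise bound on $\rho_n$ the correction integrand is at most $C/n$, hence that term vanishes as $n\to\infty$. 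Monotone convergence yields $\dE\!\int w^+(t,x)\psi(x)\,dx \leq \tfrac12\int_0^t\!\dE\!\int w^+(s,x)|\psi''(x)|\,dx\,ds$, and a Gronwall argument over an exponentially-weighted family of test functions (whose heat flow is controlled) forces $w^+\equiv 0$ a.s.\ on $(0,T)\times\dR$.

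The principal obstacle is the $\ep\downarrow 0$ limit in the It\^o identity above: since $\sigma(u^{(i)})*J_\ep\neq\sigma(u^{(i)}*J_\ep)$, the quadratic-variation integrand produced by It\^o applied to $\phi_n(w*J_\ep)$ matches the desired $\rho_n(w)[\sigma(u^{(2)})-\sigma(u^{(1)})]^2$ only modulo mollifier-commutator errors. Controlling those errors relies on the standard space-time H\"older continuity of the SHE mild solutions (for $\sigma$ of sublinear growth) together with the log-Lipschitz modulus of Step~1, and verifying that they decay fast enough not to spoil the subsequent $n\to\infty$ cancellation is where the bulk of the technical work must lie.
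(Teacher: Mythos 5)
Your approach is genuinely different from the paper's, and it has a structural gap that I do not think is merely ``the bulk of the technical work.''

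The paper does not argue via Yamada--Watanabe smoothing at all. It truncates $\sigma$ to the globally Lipschitz $\sigma^{(m)}$ defined in \eqref{eq:definition of sigma^m}, invokes the known weak comparison principle for Lipschitz nonlinearities (Lemma \ref{lem:weak comparison for u^m}, quoting Shiga and Chen--Kim) to get $u^{(m,1)}\geq u^{(m,2)}$ a.s.\ for each $m$, shows in Proposition \ref{prop:existence of strong mild solution} that $u^{(m,i)}\to u^{(i)}$ in $L^2(\Omega)$ at each $(t,x)$, and then uses the H\"older continuity of $u^{(1)},u^{(2)}$ from Theorem \ref{thm:strict positivity}(i) to upgrade the pointwise a.s.\ inequality to a simultaneous one. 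The hypothesis \eqref{eq:positivity_condition} with $\alpha<1/4$ enters only through the Cauchy estimate in Proposition \ref{prop:existence of strong mild solution}, where it tames the Mittag-Leffler/exponential blow-up $\exp(c\,m^4\sigma^4(m^{-1}))$ coming from the diverging Lipschitz constants $m\sigma(m^{-1})$ of $\sigma^{(m)}$; the bound $m\sigma(m^{-1})\lesssim(\log m)^\alpha$ with $4\alpha<1$ is exactly what makes $\sum_m m^{-1}\sigma((m-1)^{-1})\exp(cm^4\sigma^4(m^{-1}))$ converge.

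The gap in your proposal is the $\epsilon\downarrow 0$ passage in the It\^o identity, and it is more serious than a mollifier-commutator error. Viewing $w*J_\epsilon(\cdot,x)$ as a semimartingale in time (for fixed $x$), its quadratic variation is
\begin{equation*}
  \int_0^t\!\int_\dR\bigl[\sigma(u^{(2)}(s,y))-\sigma(u^{(1)}(s,y))\bigr]^2 J_\epsilon(x-y)^2\,dy\,ds,
\end{equation*}
and $\int_\dR J_\epsilon(x-y)^2\,dy = \|J\|_{L^2}^2/\epsilon$. The factor $\epsilon^{-1}$ is intrinsic to space-time white noise; it cannot be cancelled by the H\"older-in-space regularity of $u$, which is only of order $\epsilon^{1/2-}$. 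Thus the correction term in your It\^o inequality does not stay bounded as $\epsilon\downarrow 0$ for fixed $n$, nor does taking $n\to\infty$ first help, since $\rho_n$'s pointwise bound degenerates as $a_n\downarrow 0$. This is not a peculiarity of the log-Lipschitz modulus: the same computation fails already for Lipschitz $\sigma$, which is precisely why Shiga and Chen--Kim prove the Lipschitz comparison by discretization/approximation rather than by direct Yamada--Watanabe smoothing, and why for pathwise uniqueness for the SHE with white noise the SODE threshold $1/2$ does not transfer (Mytnik--Perkins need $>3/4$ and use a fundamentally different argument). In short, the condition $\int_{0^+}\rho(x)^{-2}\,dx=\infty$ that drives Yamada--Watanabe for SODEs has no analogous force for space-time white noise, and the route you sketch is blocked at the mollification step, not merely complicated by it.
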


\begin{remark} 
  It is worth noting that Mytnik and Perkins \cite{mytnik2011pathwise} established the pathwise uniqueness when $\sigma(u)$ is H\"older continuous with an exponent greater than 3/4. However, the combination of \eqref{eq:csp_condition} and \eqref{eq:difference of sigma bounded by rho} implies that $\sigma(u)$ is indeed H\"older continuous with an exponent greater than 3/4, thus ensuring that our condition is stronger than that of Mytnik and Perkins. We point out that our method provides the weak comparison principle and strict positivity whereas they are not shown in \cite{mytnik2011pathwise}. 
\end{remark}

\begin{remark}
In Theorems \ref{thm:strict positivity} and \ref{thm:weak comparison principle}, the assumptions on the regularity of initial data can be relaxed considerably. Indeed, one can prove that initial functions are allowed to be any measure $\mu$ on $\dR$ such that $\int_{\dR} \exp(-cx^2)\mu(dx) <\infty$ for all $c>0$ following the ideas in the proof of \cite[Theorem 1.3]{chen2017comparison}.  
\end{remark}

We now introduce another  condition on $\sigma$ that ensures CSP for solutions to \eqref{eq:SHE}. It is important to note that this condition does not guarantee the uniqueness of a weak solution, and indeed, uniqueness is not required for our purposes. We will show that under this condition, any nonnegative weak solution to \eqref{eq:SHE} exhibits  CSP.

\begin{theorem}[\bf Existence and Compact support property]\label{thm:cpt_support}
Suppose $\sigma:\dR \to \dR$ satisfies Assumption \ref{assumption:basic condition for sigma} and there exists $\alpha\in(\frac{5}{2},\infty)$ and $v_2 = v_2(\alpha)>0$ such that for all $v\in(0,v_2)$
\begin{equation}\label{eq:csp_condition}
\frac{v}{\sigma(v)} \leq (-\log v)^{-\alpha}. 
\end{equation}
Let $u_0\in C^{1/2}(\dR)$ be nonnegative and compactly supported. Let $\tau$ be any bounded stopping time. Then, we have the following:
\begin{itemize}
\item[(i)] There exists a stochastically weak solution $u\in C([0,\tau]; C_{tem} (\dR))$ to \eqref{eq:SHE} that is also nonnegative. Additionally, for any weak solution $u\in C([0,\tau]; C_{tem} (\dR))$ to \eqref{eq:SHE} and any $\gamma \in (0,1/4)$, the following holds for every $a>0:$
\begin{equation}
\| \Psi_a u\|_{C^\gamma([0,\tau]\times \dR)} < \infty \quad \text{almost surely,}
\end{equation}
where $\Psi_a(x) = \Psi_a(|x|)= \frac{1}{\cosh(a|x|)}.$
\item[(ii)] For  any nonnegative weak solution $u\in C([0,\tau]; C_{tem} (\dR))$ to \eqref{eq:SHE}, we have that 
\begin{equation}\label{eq:compact support property}
\dP \Big(\text{There exists }R= R(\omega)>0 \text{ such that }  \supp(u(t,\cdot)) \subseteq [-R,R] \Big) =1,
% \text{There exists }R= R(\omega)>0 \text{ such that } u(t,x) =0 \text{ for all }(t,x) \in [0,\tau]\times \dR \text{ with }|x|>R \Big) =1.
\end{equation} where $\supp(u(t,\cdot))$ denotes the support of the solution $u(t,\cdot)$ for each $t\geq0$.
\end{itemize}
\end{theorem}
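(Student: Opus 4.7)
For part (i), the existence and regularity follow from a standard approximation scheme. Introduce globally Lipschitz truncations $\sigma_n$ that coincide with $\sigma$ on $[n^{-1}, n]$ and extend affinely at the endpoints; Assumption \ref{assumption:basic condition for sigma}(b) guarantees the uniform linear growth $|\sigma_n(u)| \le C(1+|u|)$. Walsh's theory then provides unique mild solutions $u_n\ge 0$ (nonnegativity via the classical comparison principle for Lipschitz coefficients). Combining BDG with Gaussian heat-kernel estimates yields moment bounds on $\Psi_a u_n$ uniform in $n$, and via Kolmogorov's criterion these upgrade to weighted $C^\gamma$ estimates of the prescribed exponents. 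This gives tightness of $\{u_n\}$ in $C([0,\tau]; C_{tem}(\dR))$; a Skorokhod representation and a martingale-problem argument identify any subsequential limit $u$ as a nonnegative weak solution. The Hölder estimate for an arbitrary weak solution $u\in C([0,\tau]; C_{tem}(\dR))$ follows by applying the same moment bounds directly to its mild representation.

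For part (ii), the plan is a Lyapunov-functional argument inspired by Kalashnikov's deterministic proof but executed inside the Itô framework. Choose $R>0$ with $\supp u_0 \subset (-R/2, R/2)$ and a smooth nonnegative cutoff $\phi_R$ supported in $\{|x|\ge R\}$. For a concave increasing $F\in C^2([0,\infty))$ with $F(0)=0$ (a natural candidate is $F(u)=u^p$ for some $p\in(0,1)$, possibly dressed with a logarithmic factor), define
\begin{equation*}
V_R(t) = \int_{\dR} F\bigl(u(t,x)\bigr)\,\phi_R(x)\,dx.
\end{equation*}
A formal Itô computation, justified via time-mollification of $u$ and integration by parts in space, gives
\begin{equation*}
V_R(t) = V_R(0) + \frac{1}{2}\int_0^t\!\!\int_{\dR} \Bigl[ F(u)\,\phi_R''(x) + F''(u)\bigl(\sigma(u)^2 - (\partial_x u)^2\bigr)\phi_R(x) \Bigr]dx\,ds + M_t,
\end{equation*}
with $M_t$ a local martingale. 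Since $V_R(0)=0$ by the disjoint-support assumption, it suffices to control the drift. The sign structure is favourable: condition \eqref{eq:csp_condition} gives $\sigma(u)^2 \gtrsim u^2(\log(1/u))^{2\alpha}$ near $u=0$, so $F''(u)\sigma(u)^2\phi_R \leq 0$ is strongly negative and should dominate both $|F(u)\phi_R''|$ and $|F''(u)(\partial_x u)^2|$. A Gronwall-type estimate then forces $\dE V_R(t)=0$ on $[0,\tau]$, and a union bound over rational $R$ combined with the space-time Hölder continuity from part (i) produces \eqref{eq:compact support property}.

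The main obstacle is controlling the $(\partial_x u)^2$ term introduced by integration by parts, since weak solutions possess only $(1/2-)$ spatial regularity and $\partial_x u$ is a priori only a distribution. To handle this, I would first mollify $u$ in space by convolving against $\rho_\ep$, run the Itô computation for $u\ast\rho_\ep$, and pass to the limit $\ep\to 0$ using spatial increment moment bounds. I expect the exponent $\alpha>5/2$ to arise precisely from balancing three ingredients in this limit: (1) the logarithmic gain $(\log(1/u))^{2\alpha}$ supplied by $\sigma$, (2) the heat-kernel cost of the mollification, and (3) the regularity loss from $(\partial_x u)^2$ against the $1/2-$ spatial H\"older regularity. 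An alternative route via comparison with a deterministic sub/super-solution of \eqref{eq:HE} is not directly available here, since Theorem \ref{thm:weak comparison principle} is proved under the opposing condition \eqref{eq:positivity_condition}, so the direct stochastic Itô argument appears unavoidable.
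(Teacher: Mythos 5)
Part (i): your sketch (truncate $\sigma$, get Lipschitz well-posedness and comparison, establish uniform $C^\gamma$ bounds in the $\Psi_a$-weighted norm via BDG and Kolmogorov, pass to a subsequential limit via Skorokhod and a martingale problem) matches the structure of what the paper does; the paper defers the details to \cite[Theorem~2.9]{han2023compact}.

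Part (ii): there is a genuine gap, and it is right at the point you flag as the main obstacle. When you apply It\^o's formula to $V_R(t)=\int F(u)\phi_R\,dx$ with $F$ concave, integration by parts on $\int F'(u)\phi_R\,\partial^2_x u\,dx$ produces the term $-\tfrac12\int F''(u)(\partial_x u)^2\phi_R\,dx$. Since $F''\le 0$, this term is \emph{nonnegative}, i.e.\ it works \emph{against} you, while the It\^o correction $\tfrac12\int F''(u)\sigma(u)^2\phi_R\,dx\le 0$ works for you. Your claim that the $\sigma(u)^2$ term ``should dominate'' $(\partial_x u)^2$ has no basis: for a solution driven by space-time white noise, $u(t,\cdot)$ is only $(1/2^-)$-H\"older, $\partial_x u$ is not a function, and if one mollifies then $\int(\partial_x u_\ep)^2\phi_R\,dx$ diverges as $\ep\to 0$ at the rate $\ep^{-1}$ --- the same rate at which the (mollified) quadratic variation contribution diverges. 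The finite part after cancellation is not sign-definite, and establishing such a renormalized cancellation is a substantial technical project well beyond a Gr\"onwall step; the extra $(\log 1/u)^{2\alpha}$ gain from \eqref{eq:csp_condition} is a logarithmic factor and cannot absorb a power-law divergence. The paper's route via Krylov sidesteps this entirely: it tests the weak formulation \eqref{eq:sol_int_eq_form} with the \emph{linear} choice $\phi(x)=(x-R)_+\psi_n(x)$, so $F(u)=u$, $F''\equiv 0$, and no $(\partial_x u)^2$ term ever appears. The Laplacian of $(x-R)_+$ produces the boundary quantity $u(s,R)$, and one gets the moment bound in Lemma~\ref{prop:estimation of quadratic variation} relating the quadratic variation $\int_0^\tau\int_R^\infty(x-R)^2\sigma(u)^2$ to $\int_0^\tau u(s,R)\,ds$. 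The decay in $R$ then comes not from a temporal Gr\"onwall estimate but from a \emph{spatial} iteration (Lemma~\ref{lem:key estimate for CSP}(ii)) along points $x_k\to y$, where the key new ingredient is \eqref{eq:controlling int of u(s,0)}: partitioning the time interval according to whether $u(s,R)$ is below or above $q^\lambda$, and using the $C^\gamma$ regularity from part~(i) to control the excursions above $q^\lambda$. Condition \eqref{eq:csp_condition} with $\alpha>5/2$ then enters to make the iterated errors summable, rather than to dominate a gradient term. So the exponent $5/2$ has a completely different origin than the heuristic balancing you describe.
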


\begin{remark}
Theorem \ref{thm:strict positivity} shows that the solution is strictly positive whenever $\sigma(u) \lesssim u|\log u|^\alpha$ for all $0\leq u\ll 1$, for $\alpha \in (0, 1/4)$. On the other hand, Theorem \ref{thm:cpt_support} indicates that whenever $\sigma(u) \gtrsim u|\log u|^\alpha$ for some $\alpha>5/2$, any weak solution is compactly supported. For instance, if $\sigma(u)= u|\log u|^{\beta}|\log\log 1/u |^\gamma$ for $0\leq u \ll 1$ and any $\gamma>0$, we have a strictly positive solution if $\beta \in (0, 1/4)$. Conversely, any nonnegative weak solution is compactly supported when $\beta>5/2$. These findings extend the well-established results for $\sigma(u)=u^\gamma$ and demonstrate their consistency with Theorems \ref{thm:strict positivity} and \ref{thm:cpt_support}.
\end{remark}

\begin{remark}
For the deterministic heat equation \eqref{eq:HE}, the condition for CSP is given by \eqref{eq:condition for CSP of deterministic PDE}, which is equivalent to
\begin{equation}\label{eq:HE_cond}
\sum_{k=1}^\infty\sqrt{ \frac{e^{-k}}{\sigma(e^{-k})} } < \infty.
\end{equation}
The following heuristic argument provides an intuitive understanding of this condition: Suppose at time $t_{k-1}$, $u(t_{k-1},x) \approx 1$ for $x \in [-x_{k-1}, x_{k-1}]$. Define $t_k$ as the time at which the solution, affected only by the negative nonlinear drift  $-\sigma$ (ignoring the Laplacian), decreases to half of its previous value of $u(t_{k-1}, x_{k-1})$. Let $x_k$ be the spatial point where this decay occurs. Then, considering only the effect of $\sigma$, $t_{k+1} - t_k$ can be roughly approximated by $e^{-k}/\sigma(e^{-k})$. Due to the heat scaling, the spatial spread $x_{k+1}-x_k$ can be approximated by $\sqrt{e^{-k}/\sigma(e^{-k})}$. Thus, CSP is expected to hold if $\sum_{k=1}^\infty (x_{k+1}-x_k)$ is finite, as indicated by \eqref{eq:HE_cond}. While a rigorous proof of this idea is beyond the scope of this remark, it serves as a useful starting point for understanding conditions for CSP and raises the question of whether there exists a parallel condition for the stochastic heat equation \eqref{eq:SHE}. Although Theorems \ref{thm:strict positivity} and \ref{thm:cpt_support} do not provide a sharp condition for \eqref{eq:SHE}, by extending the heuristic argument into SPDEs, we might predict that $t_{k+1} - t_k \approx \left[ e^{-k}/\sigma(e^{-k})\right]^4$ since $u(\cdot,x)$ for each $x\in\dR$ behaves as a fractional Brownian motion with Hurst index $1/4$ (see \cite[Section 3]{khoshnevisan2014analysis}). This leads to the proposed critical condition for CSP in \eqref{eq:SHE}:
\begin{equation*}%\label{eq:SHE_cond}
\sum_{k=1}^\infty \left[ \frac{e^{-k}}{\sigma(e^{-k})} \right]^2 < \infty.
\end{equation*} 
\end{remark}

\subsection*{Proof ideas}

We now briefly discuss the proofs of Theorems \ref{thm:strict positivity} and \ref{thm:cpt_support}, focusing on strict positivity (Theorem \ref{thm:strict positivity} (ii)) and CSP (Theorem \ref{thm:cpt_support} (ii)). For the proof of Theorem \ref{thm:strict positivity} (ii), we adopt the strategy introduced by Mueller \cite{mueller1991support}, which involves a mild solution (see Definition \ref{def:definition of mild solution}) consisting of two components: the solution to the deterministic heat equation, which is inherently strictly positive, and the stochastic integral part influenced by noise. Mueller's approach relies on the observation that if the noise-induced fluctuations are sufficiently small over a short duration, the solution's strict positivity is maintained. In our analysis, we focus on tightly controlling the influence of noise to ensure strict positivity, which is an improvement over Mueller's original methodology in that we deal with a more general class of nonlinearities $\sigma$ (see Proposition~\ref{prop:key prop for strict positivity}).

On the other hand, for the proof of Theorem \ref{thm:cpt_support} (ii), we follow the strategy by Krylov \cite{krylov1997result}, which introduces a framework based on weak solutions (see Definition \ref{def:definition of weak solution}) to establish CSP for $\sigma(u)=u^\gamma$. Central to this framework is Lemma 2.1 from \cite{krylov1997result}, an analogue of which appears as Lemma \ref{lem:key estimate for CSP} in this paper. Since Krylov's work focused on $\sigma(u)=u^\gamma$, he was able to use Jensen's inequality to obtain Lemma 2.1 in \cite{krylov1997result}. However, given the broader scope of our $\sigma$ beyond $u^\gamma$, applying Jensen's inequality directly proves challenging. To overcome this challenge, our method introduces a novel partitioning of the time interval according to the size of the solution (see Section \ref{subsec:proof of props}). We examine the behavior of the solution in distinct time intervals: those where the solution exhibits relatively low values and those where it is comparably higher. This approach allows us to formulate Lemma \ref{lem:key estimate for CSP} effectively, adapting Krylov's framework to our more general setting.

We conclude the Introduction with an outline of the paper and some notation. Section \ref{sec:preliminaries} provides definitions of mild and weak solutions of \eqref{eq:SHE}. The proofs of Theorems \ref{thm:strict positivity} and \ref{thm:weak comparison principle} are presented in Sections \ref{sec:weak comparison} and \ref{sec:strict positivity}. Finally, Section \ref{sec:proof of compact support property} is dedicated to the proof of Theorem \ref{thm:cpt_support}.

\subsection*{Notation}

We define $C_{tem}(A)$ and $C^\gamma(A)$ for $A\subset \dR$ by $$ C_{tem}(A) := \left\{ u\in C(A):\sup_{x\in A}|u(x)|e^{-a |x|}<\infty\text{ for any } a>0 \right\}, 
$$ where $C(A)$ is the space of continuous functions on $A$ and 
$$
C^\gamma(A) := \left\{ u\in C(A)\,;\:\, \|u\|_{C^\gamma(A)}:=\sup_{x\in A}|u(x)| + \sup_{x\neq y\in A}\frac{|u(x)-u(y)|}{|x-y|^\gamma} <\infty  \right\}.
$$ For simplicity, we may write $C^\gamma$ and $L^p$ for $C^\gamma(\dR)$ and $L^p(\dR)$ respectively. For any subsets $A$ and $B$ of Banach spaces, We denote by $C(A;B)$  the space of continuous functions from $A$ to $B$. We define $\cS := \cS(\dR_+\times \dR)$ by
\[
  \cS(\dR_+\times \dR):= \left\{ u\in C^\infty(\dR_+\times \dR)\,: \, \sup_{x\in\dR_+\times\dR} |x^\alpha (D^\beta u(x))|<\infty \text{ for all } (\alpha,\beta) \in \dN^2 \right\}.
\] Throughout this paper, we write $C=C(a_1, a_2,...,a_k)$ and $C_i=C_i(a_1, a_2,...,a_k)$ for $i,k\in \dN$ as  generic constants which depend only on $a_1, a_2,...,a_k$. The nuemeric value of $C$ can be changed line by line. We write $a\lesssim b$ when $a\leq C\cdot  b$ holds for a universal constant $C>0$. For $a,b\in\dR$, we write $a\wedge b$ and $a\vee b$ for $\min(a,b) $ and $\max(a,b)$ respectively.

\section{Preliminaries}\label{sec:preliminaries}

We present some preliminaries for the SHE \eqref{eq:SHE}. Let $(\Omega , \cF,  \dP) $ be a complete probability space. We denote by $\{ \dot{W}(t,x) : (t,x) \in \dR_+ \times \dR\}$ the space-time white noise. More precisely, $\{W(\phi) : \phi \in\cS(\dR_+\times \dR)\}$ is a mean-zero Gaussian process on $(\Omega , \cF, \dP)$ with the covariance functional 
\begin{equation}
\label{def of covaiance functional}
\begin{aligned}
\dE \left[W(\phi)W(\psi)\right] 
&=\int_0^\infty\int_{\dR} \phi(t,x)\psi(t,x) dx dt \quad \text{for }\phi,\psi\in \cS.
\end{aligned}
\end{equation} Here, we write $W(\phi)$ for
\begin{equation*}
  W(\phi) : = \int_0^\infty\int_{\dR} \phi(t,x) W(dtdx).
 \end{equation*} We now regard $\{ W(t,x) : (t,x) \in \dR_+ \times \dR\}$ as the martingale measure (see \cite{walsh1986introduction,dalang1999extending}) extended from $\{W(\phi) : \phi \in\cS(\dR_+\times \dR)\}$.  Let $\{ \cF_t : t\geq 0\} $ be the filtration of the $\sigma-$fields generated by $\dot{W}$ satisfying the usual conditions, and $\cP$ be the predictable $\sigma-$field related to $\cF_t$. We also set $\cF: = \vee_{t\geq 0 }\cF_t$. We define $\|X \|_{p}$ for a random variable $X:\Omega \to \dR$ and $p>0$ by 
 \[ \|X \|_{p} = (\dE\left[|X|^p \right])^{1/p}. \]
We note that in Theorem \ref{thm:strict positivity}, we focus on the concept of a \emph{mild solution} to  \eqref{eq:SHE}, while in Theorem \ref{thm:cpt_support}, we consider a \emph{weak solution} to \eqref{eq:SHE}. In general, particularly when the function $\sigma$ is well-behaved, mild and weak solutions are shown to be equivalent (see \cite{da2014stochastic,khoshnevisan2014analysis}). For proving strict positivity (Theorem \ref{thm:strict positivity} (ii)), we choose to work with mild solutions because they are more suitable for our approach. On the other hand, our strategy for demonstrating CSP (Theorem \ref{thm:cpt_support} (ii)) is better suited to weak solutions. We will provide definitions of both mild and weak solutions in the following:
 
\begin{definition}\label{def:definition of mild solution} 
  We say that $u$ is a mild solution of \eqref{eq:SHE} if for all $(t,x)\in\dR_+\times \dR$, 
  \begin{equation}\label{eq:definition of mild solution}
    u(t,x) = (G(t,\cdot) *u_0)(x) + \int_0^t \int_{\dR} G(t-s,x-y)\sigma(u(s,y))W(dsdy),
  \end{equation} almost surely, and the following properties hold:
  \begin{itemize}
    \item [(1)] $u(t,x)$ is $\cF_t$-measurable for all $(t,x)\in\dR_+ \times \dR$;
    \item [(2)] $u$ is jointly measurable with respect to $\mathcal{B}(\dR_+\times\dR)\times \cF $, where $\mathcal{B}(\dR_+\times\dR)$ denotes the Borel $\sigma-$field of $\dR_+\times\dR$ ;
    \item [(3)] $\dE[|u(t,x) |^2]<+\infty$ for all $(t,x)\in \dR_+\times \dR$;
    \item [(4)] The function $(t,x) \mapsto u(t,x)$ mapping $\dR_+\times \dR$ into $L^2(\Omega)$ is continuous.
  \end{itemize}
\end{definition} Note that the stochastic integral \eqref{eq:SHE} can be  understood in the sense of Walsh (see \cite{walsh1986introduction,dalang1999extending}) and $G(t,x)$ is the heat kernel on $\dR$, which is given by 
\[  G(t,x) : = \frac{1}{\sqrt{2\pi t}} \exp\left( -\frac{x^2}{2t} \right).\] 

In the sequel, $\tau$ is any given bounded stopping time, i.e., $\tau\leq  T$ for some non-random number $T>0$. 

\begin{definition} 
\label{def:definition of weak solution} We say that $u$ is a weak solution of \eqref{eq:SHE} on $[0, \tau]$ if $u$ is a $C_{tem}(\dR)$-valued function $u = u(t,\cdot)$ defined on $\Omega\times[0,\tau]$ satisfying that if for any $\phi\in \cS$  
\begin{itemize}
\item[(1)]
The process $\int_{\dR} \phi(x)u(t\wedge \tau,x) dx$ is well-defined, 
$\cF_t$-adapted, and continuous;

\item[(2)]
The process 
$$ \int_{\dR} \left| \sigma(u(t \wedge \tau,x))\phi(x) \right|^2 dx $$ 
is well-defined, $\cF_t$-adapted, measurable with respect to $(\omega,t)$, and 
\begin{equation}
\label{eq:quadratic variation of stochastic part}
\begin{gathered}
 \int_0^\tau \int_{\dR}\left| \sigma(u(t,x))\phi(x) \right|^2 dx  dt <\infty
\end{gathered}
\end{equation}
almost surely;

\item[(3)]
The equation
\begin{equation} \label{eq:sol_int_eq_form}
\begin{aligned}
  (u&(t,\cdot),\phi)_{L^2(\dR)}\\ &= (u_0,\phi)_{L^2(\dR)}+\int_0^t(u(s,\cdot), (2^{-1}\partial_x^2\phi)(s,\cdot))_{L^2(\dR)} ds + \int_0^t\int_{\dR}\sigma(u(s,x))\phi(x) W(dsdx)
\end{aligned}
\end{equation} holds for all $t\leq \tau$ almost surely.

\end{itemize}

\end{definition}

\section{Well-posedness  and weak comparison principle}\label{sec:weak comparison}
Throughout this section, we assume that $\sigma$ satisfies Assumption \ref{assumption:basic condition for sigma}. Our goals in this section are to demonstrate the existence and uniqueness of a mild solution, as outlined in Theorem \ref{thm:strict positivity} (i), subject to the condition \eqref{eq:positivity_condition}, and to establish the weak comparison principle presented in Theorem \ref{thm:weak comparison principle}, which is an essential step towards proving strict positivity (see Theorem \ref{thm:strict positivity} (ii)). To lay the groundwork for these proofs, we begin by examining the pseudo-Lipschitz continuity of the function $\sigma$ in Assumption \ref{assumption:basic condition for sigma}, a property that will be frequently used throughout this section.

\begin{lemma}\label{lem:property of sigma}
  For any function $\sigma$ satisfying Assumption \ref{assumption:basic condition for sigma} with the constants $\fd>0$ and $L_\fd>0 $ therein, we have the following property: For any $\delta \in (0, \fd) $, we have 
  \begin{equation}\label{eq:delta-Lipschitz continuity of sigma}
    |\sigma(u) - \sigma(v) | \leq \left( L_\fd \vee \delta^{-1}\sigma(\delta) \right)|u-v| + \sigma (\delta), \quad \text{for all }u,v\in \dR.
  \end{equation} 
  Moreover, we have 
  \begin{equation}\label{eq:difference of sigma bounded by rho}
    |\sigma(u) - \sigma(v) | \leq   L_\fd |u-v| + \sigma(|u-v|), \quad \text{for all }u,v\in \dR,
  \end{equation}
\end{lemma}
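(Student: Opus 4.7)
The plan is to extract two consequences from Assumption~\ref{assumption:basic condition for sigma}(a), which says that $u\mapsto\sigma(u)/u$ is nonincreasing on $(0,\fd]$: (i) a local Lipschitz bound on $[\delta,\fd]$ with constant $\sigma(\delta)/\delta$, and (ii) subadditivity of $\sigma$ on $[0,\fd]$. Both claimed inequalities will then follow from a case analysis based on where $u$ and $v$ sit relative to the thresholds $\delta$ and $\fd$.

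For (i), for $\delta\le u\le v\le \fd$ I would write $\sigma(v)-\sigma(u)=v\cdot\sigma(v)/v-u\cdot\sigma(u)/u$ and apply the chain of inequalities $\sigma(v)/v\le\sigma(u)/u\le\sigma(\delta)/\delta$ to get $\sigma(v)-\sigma(u)\le (v-u)\sigma(\delta)/\delta$. Combined with Assumption~\ref{assumption:basic condition for sigma}(b), this shows that $\sigma$ is $K$-Lipschitz on $[\delta,\infty)$ with $K:=L_\fd\vee \delta^{-1}\sigma(\delta)$; the only mixed sub-case, $u\in[\delta,\fd]$ and $v\ge \fd$, is handled by splitting at $\fd$. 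For (ii), given $0\le a,b$ with $a+b\le\fd$, the inequalities $\sigma(a+b)/(a+b)\le \sigma(a)/a$ and $\sigma(a+b)/(a+b)\le \sigma(b)/b$ give
\[
\sigma(a+b)=a\cdot\frac{\sigma(a+b)}{a+b}+b\cdot\frac{\sigma(a+b)}{a+b}\le a\cdot\frac{\sigma(a)}{a}+b\cdot\frac{\sigma(b)}{b}=\sigma(a)+\sigma(b).
\]

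To deduce \eqref{eq:delta-Lipschitz continuity of sigma}, I may assume $0\le u\le v$ since $\sigma$ vanishes on $(-\infty,0]$. If $v\le\delta$ then $|\sigma(v)-\sigma(u)|\le\sigma(\delta)$; if $u\ge\delta$ then (i) gives $|\sigma(v)-\sigma(u)|\le K|v-u|$; and the remaining case $u<\delta<v$ is handled by inserting $\sigma(\delta)$ and using $v-\delta\le v-u$. To deduce \eqref{eq:difference of sigma bounded by rho}, I again take $0\le u\le v$ and split according to whether $v\le\fd$, $u\ge\fd$, or $u<\fd<v$. In the first case, subadditivity (together with monotonicity, which also takes care of $u<0$ via $\sigma(v)\le\sigma(v-u)$) gives $\sigma(v)-\sigma(u)\le\sigma(v-u)$; in the second, Assumption~\ref{assumption:basic condition for sigma}(b) gives the pure Lipschitz bound; in the third, splitting at $\fd$ and bounding $\sigma(\fd)-\sigma(u)\le\sigma(\fd-u)\le\sigma(v-u)$ by subadditivity and monotonicity, while using $\sigma(v)-\sigma(\fd)\le L_\fd(v-\fd)\le L_\fd(v-u)$, yields the claim. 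The case $v-u\ge\fd$ with $u<\fd<v$ also needs $\sigma(\fd)\le\sigma(v-u)$, which is monotonicity.

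The main (though minor) obstacle is keeping the case analysis clean and checking in each mixed case that an auxiliary inequality such as $v-\delta\le v-u$, $\fd-u\le v-u$, or $\fd\le v-u$ holds, so that the constants line up with the prescribed forms $K|u-v|+\sigma(\delta)$ and $L_\fd|u-v|+\sigma(|u-v|)$ without stray factors. No deep idea is required beyond the observation that nonincreasingness of $\sigma(u)/u$ on $(0,\fd]$ simultaneously encodes a quantitative Lipschitz bound away from $0$ and subadditivity near $0$.
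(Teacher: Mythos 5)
Your proof is correct and takes essentially the same approach as the paper: both argue by case analysis on $u,v$ relative to the thresholds $\delta$ and $\fd$, using the monotonicity of $v\mapsto\sigma(v)/v$ from Assumption \ref{assumption:basic condition for sigma}(a) together with the global Lipschitz bound from (b). The only difference is organizational — you extract the local Lipschitz bound on $[\delta,\fd]$ and the subadditivity on $[0,\fd]$ as explicit helper facts before assembling, whereas the paper invokes the same two inequalities inline within each case.
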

\begin{proof}
  We prove the lemma for the case $u\geq v \geq 0 $ since $\sigma(u)\1_{u\leq 0 } = 0 $ (see Assumption \ref{assumption:basic condition for sigma}). If $u\geq v \geq \fd > \delta$, then (b) of Assumption \ref{assumption:basic condition for sigma} immediately shows that \eqref{eq:delta-Lipschitz continuity of sigma} holds. For the case $u\geq \fd \geq v \geq \delta$, we have
  \begin{equation*}
  \begin{aligned}
        |\sigma(u)-\sigma(v)| &\leq \sigma(u) - \sigma(\fd) + \sigma(\fd) -\sigma (v) \\
        &\leq L_\fd(u-\fd) + \frac{\sigma(\fd)}{\fd} \fd  -  \frac{\sigma(v)}{v} v\\
        &\leq L_\fd(u-\fd) + \frac{\sigma(v)}{v}(d-v)\\
        &\leq \left( L_\fd \vee \frac{\sigma(\delta)}{\delta}\right) (u-v).
  \end{aligned}
   \end{equation*}
In the second line, we have used Assumption \ref{assumption:basic condition for sigma} (b) for the first term. In the third line, we used Assumption \ref{assumption:basic condition for sigma} (a) for the second term. In the last line, we again used Assumption \ref{assumption:basic condition for sigma} (a). For the case $u\geq \fd \geq \delta \geq v$, we similarly obtain that
\begin{equation*}
  \begin{aligned}
        \sigma(u)-\sigma(v) &\leq \sigma(u) - \sigma(\fd) + \sigma(\fd) -\sigma(\delta ) +\sigma(\delta ) -\sigma (v) \\
        &\leq L_\fd (u-\fd) +  \frac{\sigma(\delta)}{\delta}( \fd - \delta)+ \frac{\sigma(\delta)}{\delta }(\delta - v) \\
        &\leq \left( L_\fd \vee \frac{\sigma(\delta)}{\delta}\right) (u-v).
  \end{aligned}
   \end{equation*} 
For the last term in the second line, we have used   
   \begin{equation*}
    \sigma(\delta)- \sigma(v)=\frac{\sigma(\delta)}{\delta }\delta - \frac{\sigma(v)}{v}v \leq \frac{\sigma(\delta)}{\delta }(\delta - v),
   \end{equation*}
due to the monotonicity of $v\mapsto \sigma(v)/v$ in Assumption \ref{assumption:basic condition for sigma} (a). The case $\fd \geq u\geq v \geq \delta$ can be proven similarly, as
\begin{equation*}
\sigma(u)-\sigma(v) = \frac{\sigma(u)}{u}u - \frac{\sigma(v)}{v}v \leq \frac{\sigma(v)}{v}(u-v)\leq \frac{\sigma(\delta)}{\delta}(u-v),
\end{equation*}
since $\sigma(u)/u\leq \sigma(v)/v\leq \sigma(\delta)/\delta$. For the case $\fd \geq u\geq \delta \geq v$, we observe that
\begin{equation*}
\sigma(u)-\sigma(v) \leq \frac{\sigma(u)}{u}u -\frac{\sigma(\delta)}{\delta}\delta+\frac{\sigma(\delta)}{\delta}\delta - \frac{\sigma(v)}{v}v \leq\frac{\sigma(\delta)}{\delta}(u-v),
\end{equation*}
again owing to the monotonicity of $v\mapsto \sigma(v)/v$. In the final case $\delta \geq u\geq v$, we can simply bound
\begin{equation*}
\sigma(u) - \sigma(v) \leq \sigma( u ) \leq \sigma(\delta).
\end{equation*}
To prove \eqref{eq:difference of sigma bounded by rho}, observe that for all $\fd\geq u \geq v$
\begin{equation*}
\frac{(u-v)\sigma(u)}{u}\leq \sigma(u-v), \quad \frac{v\sigma(u)}{u}\leq \sigma(v),
\end{equation*}
which implies that
\[ \sigma(u) -\sigma(v) \leq \sigma(u-v).\]
If $u\geq \fd \geq v$, then we have
\begin{equation*}
\begin{aligned}
\sigma(u) - \sigma(v) &\leq \sigma (u) - \sigma( \fd ) + \sigma(\fd )- \sigma(v)\
\leq L_\fd (u-\fd) + \sigma( \fd - v)\
 \leq L_\fd (u-v) + \sigma(u-v)
\end{aligned}
\end{equation*}
by Assumption \ref{assumption:basic condition for sigma}. The case $u\geq v \geq \fd$ is proven immediately by Assumption \ref{assumption:basic condition for sigma} (b).
\end{proof}

The existence and uniqueness of a mild solution to \eqref{eq:SHE} can be established using a classical fixed point argument or a Picard iteration method when $\sigma$ is globally Lipschitz continuous. However, in our case, the fact that $\sigma(u)/u$ can blow up as $u\to 0$ makes these classical methods inapplicable. Lemma \ref{lem:property of sigma} suggests a potential workaround: by truncating $\sigma(u)$ when $u$ is small, we can make the truncated $\sigma$ globally Lipschitz. More specifically, we consider the following stochastic heat equations: For each integer $m\geq 1$, we consider the following stochastic heat equation: 
\begin{equation}\label{eq:M-truncated SPDE}
  \begin{cases}
      \partial_tu^{(m)}(t,x) = \frac{1}{2}\partial^2_x u^{(m)}(t,x) + \sigma^{(m)}(u^{(m)}(t,x))\dot{W}(t,x), \quad(t,x)\in \dR_+ \times \dR,\\
      u^{(m)}(0,x) = u_0(x), \quad x\in\dR,
    \end{cases}
\end{equation} where 
\begin{equation}\label{eq:definition of sigma^m}
  \sigma^{(m)}(u) : = \sigma (u)\1_{u\geq 1/ m } + m \sigma(m^{-1}) u\1_{0\leq u< 1/m } \quad \text{for all }u\geq 0,
\end{equation} and $\sigma^{(m)}(u)\1_{u\geq 0} =0$. We can follow the same line of the proof of Lemma \ref{lem:property of sigma} to observe that there exists $m_0\geq 1 $ such that for all $m\geq m_0 $, 
\begin{equation*}
  |\sigma^{(m)}(u) -\sigma^{(m)}(v) | \leq (m \sigma(m^{-1})  \vee L_\fd)|u-v|  \quad \text{for all }u,v\in\dR .
\end{equation*} 
Note that the only difference from \eqref{eq:delta-Lipschitz continuity of sigma} occurs when $0\leq u,v\leq 1/m$. However, in this case, we get $|\sigma^{(m)}(u) -\sigma^{(m)}(v) | \leq m \sigma(m^{-1})|u-v|$ from the definition of $\sigma^{(m)}$ in \eqref{eq:definition of sigma^m}. Without loss of generality, throughout the remainder of this section, we assume that for all $m\geq m_0$, 
\begin{equation}\label{eq:lipschitz continuity of sigma^m}
  |\sigma^{(m)}(u) -\sigma^{(m)}(v) | \leq m \sigma(m^{-1})|u-v|  \quad \text{for all }u,v\in\dR.
\end{equation} Indeed, if we assume that $m\sigma(m^{-1})\leq L_\fd$ for all $m\geq 1$, then $\sigma^{(m)}$ is globally Lipschitz continuous for all $m\geq 1$, which would simplify the proof considerably. We also note that \eqref{eq:lipschitz continuity of sigma^m} implies that $\sigma^{(m)}$ is globally Lipschitz continuous for each $m\geq m_0$. Therefore, for each $m\geq m_0$, there exists a unique mild solution to \eqref{eq:M-truncated SPDE}, which is also H\"older continuous in time and space. Moreover, it is known that the weak comparison principle holds for such equations, as demonstrated in the works of  Shiga \cite{shiga1994two} and  Chen and Kim \cite{chen2017comparison}. The weak comparison principle is crucial for establishing the nonnegativity of solutions and comparing solutions with different initial data. The following lemma, which can be found in \cite[Corollary 2.4]{shiga1994two} and \cite[Theorem 1.1]{chen2017comparison}, states the weak comparison principle for the solutions of \eqref{eq:M-truncated SPDE}:

\begin{lemma}\label{lem:weak comparison for u^m}
  Let $u^{(m,1)}$ and $u^{(m,2)}$ be two mild solutions to \eqref{eq:M-truncated SPDE} with initial data $u^{(1)}_0\in L^\infty(\dR)$ and $u^{(2)}_0\in L^\infty(\dR)$ respectively. Suppose that $u^{(1)}_0(x) \geq u^{(2)}_0(x)$ for all $x\in\dR$. Then, we have 
    \begin{equation*}
    \dP\left(u^{(m,1)}(t,x)\geq u^{(m,2)}(t,x) \text{ for all }(t,x) \in \dR_+\times \dR  \right) =1.
  \end{equation*} In particular, the solution $u^{(m)}$ of \eqref{eq:M-truncated SPDE} is nonnegative if $u_0 \in L^\infty(\dR)$ is nonnegative.
\end{lemma}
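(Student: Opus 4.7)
The plan is to reduce Lemma \ref{lem:weak comparison for u^m} to existing weak comparison principles for SPDEs with globally Lipschitz noise coefficients, rather than re-prove it from scratch. First, I would verify that $\sigma^{(m)}$ satisfies the hypotheses of Corollary 2.4 in Shiga \cite{shiga1994two} (or equivalently Theorem 1.1 in Chen-Kim \cite{chen2017comparison}). The estimate \eqref{eq:lipschitz continuity of sigma^m}, combined with the linear-growth bound $|\sigma^{(m)}(u)|\le (m\sigma(m^{-1})\vee L_\fd)|u|$ that follows from the definition \eqref{eq:definition of sigma^m} and Assumption \ref{assumption:basic condition for sigma}(b), shows that for each $m\ge m_0$ the truncated coefficient is globally Lipschitz on $\dR$; and by construction $\sigma^{(m)}(0)=0$ with $\sigma^{(m)}(u)\1_{u\le 0}=0$. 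This places us squarely within the setting of the cited comparison theorems.

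Second, I would apply those comparison theorems directly. Driving both $u^{(m,1)}$ and $u^{(m,2)}$ by the \emph{same} space-time white noise $W$, with bounded ordered initial data $u_0^{(1)}\ge u_0^{(2)}$, and sharing the globally Lipschitz coefficient $\sigma^{(m)}$ with $\sigma^{(m)}(0)=0$, the Shiga/Chen-Kim result gives
\[
\dP\bigl(u^{(m,1)}(t,x)\ge u^{(m,2)}(t,x)\ \text{for all }(t,x)\in\dR_+\times\dR\bigr)=1.
\]
For the nonnegativity statement, I would specialize by taking $u_0^{(2)}\equiv 0$; since $\sigma^{(m)}(0)=0$, pathwise uniqueness for the globally Lipschitz equation forces $u^{(m,2)}\equiv 0$, and the comparison just obtained yields $u^{(m)}=u^{(m,1)}\ge 0$ almost surely for all $(t,x)$ whenever $u_0\ge 0$.

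No genuine obstacle is anticipated here: the substantive analytic content, namely that $\sigma^{(m)}$ is globally Lipschitz, has already been carried out in \eqref{eq:lipschitz continuity of sigma^m} via Lemma \ref{lem:property of sigma}. The only care needed is matching technical conventions between our setup and the cited works (initial data in $L^\infty(\dR)$, Walsh-type stochastic integration, and existence of a jointly measurable version of $u^{(m)}$), but all of these hypotheses are either standard in both references or immediate from the global Lipschitz property of $\sigma^{(m)}$.
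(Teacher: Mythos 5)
Your proposal matches the paper exactly: the paper itself does not give an independent proof of this lemma but simply observes that $\sigma^{(m)}$ is globally Lipschitz (via \eqref{eq:lipschitz continuity of sigma^m}) with $\sigma^{(m)}(0)=0$, and cites \cite[Corollary 2.4]{shiga1994two} and \cite[Theorem 1.1]{chen2017comparison} to conclude. Your verification of the hypotheses and the specialization to $u_0^{(2)}\equiv 0$ for nonnegativity are both correct and are precisely the intended reading.
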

To prove the existence and uniqueness of a mild solution to \eqref{eq:SHE}, we first establish moment estimates for the solutions $u^{(m)}$ to the truncated equation \eqref{eq:M-truncated SPDE}. These estimates play a key role in showing that the sequence $u^{(m)}$ converges as $m \to \infty$. From \eqref{eq:lipschitz continuity of sigma^m}, we have the following  moment estimate for the solution $u^{(m)}$ of \eqref{eq:M-truncated SPDE}: For all $p\geq 2 $ there exists $C=C(\|u_0\|_{L^\infty} , L_\fd)>0$  such that for any $T>0$ 
\begin{equation}\label{eq:pth moment bound for lipschitz sigma}
  \sup_{0\leq t\leq T} \sup_{x\in\dR}\| u^{(m)}(t,x)\|_p^p\leq C^p\exp \left( Cm^4 \sigma^4(m^{-1}) p^3 T\right )
  \end{equation} (see \cite{khoshnevisan2014analysis,chen2017comparison,le2023superlinear} for instance). Observe that the RHS diverges as $m\to \infty$. However, from the definition of $\sigma^{(m)}$ in \ref{eq:definition of sigma^m} and Assumption \ref{assumption:basic condition for sigma}, there exists $C=C(L_\fd)>0$ such that uniformly over all $m\geq 1 $, for all $u\in\dR$
  \begin{equation} \label{eq:uniform bound for sigma m}
    |\sigma^{(m)}(u)| \leq C(1+|u|).
  \end{equation} From this fact, it is quite standard to deduce that the moments of $u^{(m)}$ can be bounded above uniformly in $m\geq 1 $. We present the proof for the convenience of the reader. 
  \begin{lemma}\label{lem:uniform moment bound for u^m} Let $u^{(m)}$ be a mild solution to \eqref{eq:M-truncated SPDE} with initial data $u_0\in L^{\infty}(\dR)$. There exists $C=C(L_{\fd}, \|u_0\|_{L^\infty})>0$ such that for all $p\geq 2$, $m\geq 1 $ and $T>0$,
    \begin{equation*}
      \sup_{0\leq t\leq T}\sup_{x\in \dR} \| u^{(m)}(t,x) \|^p_p \leq C^p \exp (C p^3 T). 
    \end{equation*}
  \end{lemma}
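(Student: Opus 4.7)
The plan is to reproduce the standard Walsh-type $L^p$ estimate for the mild solution, but to crucially rely on the \emph{uniform} sublinear bound \eqref{eq:uniform bound for sigma m} on $\sigma^{(m)}$ rather than the $m$-dependent Lipschitz bound \eqref{eq:lipschitz continuity of sigma^m}. Since \eqref{eq:pth moment bound for lipschitz sigma} already guarantees (for each fixed $m\ge m_0$) that
\[
M_m(T,p) := \sup_{0\le t \le T}\sup_{x\in\dR}\|u^{(m)}(t,x)\|_p < \infty,
\]
we may legitimately apply Burkholder--Davis--Gundy and Minkowski's inequality to the stochastic convolution in the mild formulation
\[
u^{(m)}(t,x) = (G(t,\cdot)*u_0)(x) + \int_0^t\!\!\int_{\dR} G(t-s,x-y)\sigma^{(m)}(u^{(m)}(s,y))\,W(ds\,dy).
\]
The task is then to show that the resulting integral inequality, combined with \eqref{eq:uniform bound for sigma m}, forces $M_m(T,p)$ to obey a bound independent of $m$.

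First, I would use BDG with the sharp constant $C\sqrt{p}$ and then Minkowski to get
\[
\|u^{(m)}(t,x)\|_p \le \|u_0\|_{L^\infty} + C\sqrt{p}\left(\int_0^t\!\!\int_{\dR} G(t-s,x-y)^2 \|\sigma^{(m)}(u^{(m)}(s,y))\|_p^2\,dy\,ds\right)^{1/2}.
\]
Next, invoking \eqref{eq:uniform bound for sigma m} yields $\|\sigma^{(m)}(u^{(m)}(s,y))\|_p^2 \le 2C^2(1+\|u^{(m)}(s,y)\|_p^2)$, and the heat-kernel identity $\int_{\dR} G(t-s,x-y)^2\,dy = [2\sqrt{\pi(t-s)}]^{-1}$ lets me pass to a spatial supremum. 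Setting $g(t) := \sup_{x\in\dR}\|u^{(m)}(t,x)\|_p^2$, and squaring, I obtain
\[
g(t) \le 2\|u_0\|_{L^\infty}^2 + C_0\, p \int_0^t \frac{1+g(s)}{\sqrt{t-s}}\,ds,
\]
with a constant $C_0 = C_0(L_\fd)$ that does not depend on $m$. The \emph{a priori} finiteness from \eqref{eq:pth moment bound for lipschitz sigma} guarantees $g(t)<\infty$, so the integral is well-defined.

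Finally, I would close the argument by applying the singular Gr\"onwall inequality (for kernels of the form $(t-s)^{-1/2}$): iterating the integral inequality, or equivalently observing that the resolvent of $p(t-s)^{-1/2}$ decays like $\exp(C p^2 t)$, yields $g(t) \le C_1 \exp(C_2 p^2 t)$ for constants $C_1, C_2$ depending only on $\|u_0\|_{L^\infty}$ and $L_\fd$. Taking $p/2$-th powers gives $\sup_{0\le t\le T}\sup_x \|u^{(m)}(t,x)\|_p^p \le C^p \exp(C p^3 T)$, as claimed. The main obstacle is purely bookkeeping: one must be careful that the constant absorbed in $\exp(C p^3 T)$ arises from $(C_1)^{p/2}$ (which is of the form $C^p$) and $(e^{C_2 p^2 T})^{p/2} = e^{(C_2/2)p^3 T}$, and that the pre-integral finiteness from \eqref{eq:pth moment bound for lipschitz sigma} is used only qualitatively so that the singular Gr\"onwall step produces the $m$-independent bound.
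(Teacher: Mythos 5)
Your proposal is correct and follows essentially the same route as the paper: Burkholder--Davis--Gundy with the sharp $\sqrt{p}$ constant, the $m$-uniform sublinear bound \eqref{eq:uniform bound for sigma m} in place of the $m$-dependent Lipschitz constant, the heat-kernel $L^2$ identity to pass to the singular kernel $(t-s)^{-1/2}$, and a singular Gr\"onwall argument (the paper phrases the last step via the Mittag--Leffler function $E_{1/2}(Cpt^{1/2})\lesssim\exp(C^2p^2t)$, which is the same resolvent estimate you invoke). The only stylistic difference is that you explicitly flag the qualitative a priori finiteness from \eqref{eq:pth moment bound for lipschitz sigma} as the justification for applying Gr\"onwall, which the paper leaves implicit; that is a sound and welcome precaution.
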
 
\begin{proof} Let $p\geq 2 $ and $T>0$. By the Burkholder-Davis-Gundy (BDG) inequality (see \cite[Theorem B.1]{khoshnevisan2014analysis}), there exists $C=C(L_\fd)>0$ such that for all sufficiently large $m\geq 1 $,
\begin{align*}
  \|u^{(m)}(t,x) \|_p^2 &\leq C\|u_0\|_{L^\infty}^2 +Cp\int_0^t \int_{\dR} G^2(t-s,y-x)\left(1+  \| u^{(m)}(s,y)\|_p^2 \right) dyds\\
    &\leq  C\|u_0\|_{L^\infty}^2 + Cp\int_0^t (t-s)^{-1/2}  \sup_{x\in\dR}\| u^{(m)}(s,x)\|_p^2ds,
\end{align*} where in the second inequality we used that for all $x\in \dR$
\begin{equation}\label{eq:heat kernel bound}
  \int_\dR G^2(t-s,y-x) dy  \lesssim (t-s)^{-1/2}.
\end{equation} Then, we can apply a version of Gronwall's inequality (see \cite[Corollary 2]{ye2007generalized}) to obtian that 
\begin{equation*}
  \|u^{(m)}(t,x) \|_p^2  \leq C\|u_0\|_{L^\infty} E_{\frac{1}{2}}\left( C p t^{\frac{1}{2}}\right),
\end{equation*} where $E_{\frac{1}{2}}$ denotes the Mittag-Leffler function with index $\frac{1}{2}$. Since $E_{\frac{1}{2}}(z) \lesssim \exp(z^2)$ for $z\in \dR$, we have that 
\begin{equation*}
  \|u^{(m)}(t,x) \|_p^2  \leq C\|u_0\|_{L^\infty} \exp\left( C^2 p^2 t\right).
\end{equation*} This completes the proof.
\end{proof}

Next, we aim to show that the sequence of solutions $u^{(m)}$ to the truncated equation \eqref{eq:M-truncated SPDE} converges as $m \to \infty$. We also need to verify that the limit function is indeed a mild solution to the original stochastic heat equation \eqref{eq:SHE}. This step is crucial for proving the existence and uniqueness of a mild solution, as the Lipschitz coefficient of $\sigma^{(m)}$ in \eqref{eq:lipschitz continuity of sigma^m} may blow up as $m \to \infty$, making the passage to the limit a non-trivial task. However, the assumption \eqref{eq:positivity_condition} allows us to control the growth of the nonlinear term, which leads to the convergence of the sequence $u^{(m)}$ and the uniqueness of a mild solution to \eqref{eq:SHE}.

\begin{proposition}\label{prop:existence of strong mild solution} Let $u^{(m)}$ be a mild solution to \eqref{eq:M-truncated SPDE} with initial data $u_0\in L^{\infty}(\dR)$. Suppose that $\sigma:\dR\to\dR$ satisfies \eqref{eq:positivity_condition}.  Then, for any $T>0$, there exists a unique mild solution $u\in C([0, T]\times \dR ; L^2(\Omega))$ to \eqref{eq:SHE} such that 
  \begin{equation*}
    \lim_{m\to \infty} \sup_{t\leq T}\sup_{x\in \dR} \|u^{(m)}(t,x) -u(t,x) \|_2=0.
  \end{equation*} 
\end{proposition}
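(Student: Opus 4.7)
The strategy is to prove that the sequence $\{u^{(m)}\}_{m \geq m_0}$ of mild solutions to the truncated equations \eqref{eq:M-truncated SPDE} is Cauchy in $C([0,T]\times\dR;L^2(\Omega))$, identify its limit $u$ as a mild solution to \eqref{eq:SHE}, and then re-run the same estimate on any two candidate mild solutions to obtain uniqueness. The essential analytic input is a fractional Osgood-type Gronwall inequality whose Osgood modulus is furnished by the positivity condition \eqref{eq:positivity_condition}.

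First I would fix $m > n \geq m_0$ and set $w := u^{(m)} - u^{(n)}$. Since the initial conditions coincide, $w$ is purely stochastic, and Burkholder--Davis--Gundy gives
\begin{equation*}
F_{m,n}(t,x) := \|w(t,x)\|_2^2 \lesssim \int_0^t \!\! \int_\dR G^2(t-s,x-y) \bigl\| \sigma^{(m)}(u^{(m)}(s,y)) - \sigma^{(n)}(u^{(n)}(s,y)) \bigr\|_2^2 \, dy\,ds.
\end{equation*}
I would decompose via the triangle inequality into (i) $|\sigma^{(m)}(u^{(m)}) - \sigma^{(n)}(u^{(m)})|$, which is pointwise bounded by $2\sigma(1/n)$ because the two truncations agree on $[1/n,\infty)$ and are each at most $\sigma(1/n)$ on $[0,1/n]$, and (ii) $|\sigma^{(n)}(u^{(m)}) - \sigma^{(n)}(u^{(n)})|$. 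A short check shows that $\sigma^{(n)}$ itself satisfies Assumption \ref{assumption:basic condition for sigma} with the same $\fd, L_\fd$, while the monotonicity of $v\mapsto \sigma(v)/v$ on $(0,\fd]$ forces $\sigma^{(n)} \leq \sigma$ pointwise. Together with Lemma \ref{lem:property of sigma} this gives $|\sigma^{(n)}(u^{(m)})-\sigma^{(n)}(u^{(n)})| \leq L_\fd |w| + \sigma(|w|)$, and therefore
\begin{equation*}
\bigl\|\sigma^{(m)}(u^{(m)}) - \sigma^{(n)}(u^{(n)}) \bigr\|_2^2 \lesssim \sigma^2(1/n) + F_{m,n}(s,y) + \dE[\sigma^2(|w(s,y)|)].
\end{equation*}

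The crux is to control $\dE[\sigma^2(|w|)]$ by a concave function of $F_{m,n}$. By \eqref{eq:positivity_condition} one has $\sigma^2(v) \lesssim v^2 (\log(1/v))^{2\alpha}$ for $v$ in a right neighborhood of $0$; since $2\alpha<1/2$, a direct calculation shows $f(r) := r(\log(1/r))^{2\alpha}$ is concave near $r=0$, and after splitting into $\{|w|\leq\fd\}$ (to which Jensen applies) and $\{|w|>\fd\}$ (where Assumption \ref{assumption:basic condition for sigma}(b) yields $\sigma^2(|w|)\lesssim |w|^2$) one obtains $\dE[\sigma^2(|w(s,y)|)] \leq C \phi(F_{m,n}(s,y))$ for a concave nondecreasing $\phi$ on $[0,\infty)$ with $\phi(r) \asymp r(1+(\log_+ 1/r)^{2\alpha})$ near $0$ and $\int_0^1 dr/\phi(r) = \infty$. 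Writing $H(t):=\sup_{s\leq t,\, x\in\dR} F_{m,n}(s,x)$ and using $\int_\dR G^2(t-s,y)\,dy \lesssim (t-s)^{-1/2}$, these estimates collapse to
\begin{equation*}
H(t) \leq C \sigma^2(1/n) T^{1/2} + C \int_0^t (t-s)^{-1/2} \bigl[ H(s) + \phi(H(s)) \bigr] \, ds,
\end{equation*}
and a fractional-kernel Osgood-type Gronwall argument (iterating the inequality and invoking $\int_0^\cdot dr/\phi(r)=\infty$) forces $\sup_{t\leq T}H(t)\to 0$ as $n\to\infty$. The limit $u$ inherits $L^2(\Omega)$-continuity, measurability, and moment bounds from the approximants, and one passes to the limit inside the mild formulation via dominated convergence, using \eqref{eq:uniform bound for sigma m}, Lemma \ref{lem:uniform moment bound for u^m} and continuity of $\sigma$. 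For uniqueness, the same estimate applied to two mild solutions $u,\tilde u$ removes the $\sigma(1/n)$ remainder and leaves $H(t) \lesssim \int_0^t (t-s)^{-1/2}\phi(H(s))\,ds$, whence Osgood yields $H\equiv 0$.

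I expect the main obstacle to be the careful packaging of the various truncation errors (the $\sigma(1/n)$ remainders, the tail of $|w|$ exceeding $\fd$, and the log-Lipschitz behaviour of $\sigma$ near $0$) into a single concave modulus $\phi$ satisfying Osgood, together with the singular-kernel (fractional) version of Osgood's lemma needed to close the Gronwall loop. This is precisely the step where the hypothesis $\alpha<1/4$ (forcing $2\alpha<1/2$) is used in an essentially sharp way.
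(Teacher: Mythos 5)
Your high-level strategy — show $\{u^{(m)}\}$ is Cauchy in $C([0,T]\times\dR;L^2(\Omega))$, pass to the limit in the mild formulation, then re-run the estimate on two candidate solutions — is the same as the paper's, and your decomposition of $\sigma^{(m)}(u^{(m)})-\sigma^{(n)}(u^{(n)})$ and the resulting integral inequality are fine as far as they go (the paper prefers consecutive differences $u^{(m)}-u^{(m-1)}$, but that is cosmetic). The genuine gap is at the Gronwall step, and it is not a technical nuisance but the heart of the proposition. You invoke a ``fractional-kernel Osgood-type Gronwall argument'' whose hypothesis you state as $\int_0^1 dr/\phi(r)=\infty$. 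For $\phi(r)\asymp r(\log(1/r))^{2\alpha}$ that integral diverges precisely when $\alpha\leq 1/2$, so if such a lemma were available as a black box under that hypothesis, Proposition~\ref{prop:existence of strong mild solution} would hold for all $\alpha<1/2$ — which the paper does not claim and which the singular kernel in fact forbids. The kernel $(t-s)^{-1/2}$ genuinely raises the threshold: the standard non-singular Osgood lemma does not survive the convolution structure, because the naive bound $H(t)\leq a + 2C\sqrt{T}\,\phi(\sup_{s\leq t}H(s))$ is vacuous for small $H$ (as $\phi(r)/r\to\infty$ at $0$), and the iteration of the inequality does not close for a concave $\phi$ since $\phi$ of an un-normalized integral has no useful Jensen-type estimate. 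A related slip: you attribute ``$2\alpha<1/2$'' to the concavity of $f(r)=r(\log(1/r))^{2\alpha}$, but a direct computation of $f''$ shows $f$ is concave in a right neighborhood of $0$ for \emph{every} $\alpha>0$, so concavity gives no constraint on $\alpha$ either.

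What actually forces $\alpha<1/4$, and what the paper does, is quantitative: linearize $\phi$ at scale $a$ (in the paper's packaging this is exactly what the truncated $\sigma^{(n)}$, with Lipschitz constant $\asymp(\log n)^{\alpha}$, accomplishes), feed this into the half-order Volterra inequality, and obtain the Mittag--Leffler bound $H\lesssim a\,\exp\bigl(C(\log(1/a))^{4\alpha}T\bigr)$; this tends to $0$ as $a\to 0$ iff the exponent $(\log(1/a))^{4\alpha}$ is $o(\log(1/a))$, i.e.\ iff $4\alpha<1$. That is precisely the paper's claim \eqref{eq:claim to end Gronwall inequality for mild solution}, and it is stronger than (and not implied by) the Osgood integral condition. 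To repair your argument you would have to replace the appeal to an unproved Osgood lemma by this explicit tangent-line-plus-Mittag--Leffler computation, at which point the proof coincides with the paper's.
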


\begin{proof} 
Throughout the proof, we fix a constant $T>0$ and assume that $t\in [0,T]$. We first  show that for each $(t,x) \in [0,T]\times \dR$,  $\{u^{(m)}(t,x)\}_{m\geq 1} $ is a Cauchy sequence in $L^2(\Omega).$ Set 
\begin{equation*}
   \bJ_m(t,x) : = u^{(m)}(t,x) -u^{(m-1)}(t,x).
\end{equation*} By the It\^o-Walsh isometry, we have 
\begin{equation}\label{eq:eq for bold J_m}
  \begin{aligned}
    \|\bJ_m(t,x) \|_2^2 = \int_0^t \int_{\dR } G^2(t-s,y-x) \| \sigma^{(m)}(u^{(m)}(s,y) )-\sigma^{(m-1)}(u^{(m-1)}(s,y)) \|_2^2 dyds.
  \end{aligned}
\end{equation} Observe that
\begin{equation*}
  \begin{aligned}
    \| \sigma^{(m)}(u^{(m)}(s,y) )-\sigma^{(m-1)}(u^{(m-1)}(s,y)) \|_2^2 \lesssim &\| \sigma^{(m)}(u^{(m)}(s,y) )-\sigma^{(m)}(u^{(m-1)}(s,y)) \|_2^2\\
    &+ \| \sigma^{(m)}(u^{(m-1)}(s,y) )-\sigma^{(m-1)}(u^{(m-1)}(s,y)) \|_2^2.
  \end{aligned}
\end{equation*} From the definition of $\sigma^{(m)}$ (see \eqref{eq:definition of sigma^m}), we can follow the same line in the proof of Lemma \ref{lem:property of sigma} to obtain that for all sufficiently large $m\geq1 $
\begin{equation*}
  \| \sigma^{(m)}(u^{(m)}(s,y) )-\sigma^{(m)}(u^{(m-1)}(s,y)) \|_2^2 \lesssim m^2\sigma^2(m^{-1})\|\bJ_m(s,y) \|_2^2. 
\end{equation*} Moreover, by \eqref{eq:definition of sigma^m} and Assumption \ref{assumption:basic condition for sigma}, there exists $m_0\geq 1 $ such that for all $m\geq m_0$
\begin{align*}
  &|\sigma^{(m)}(u )-\sigma^{(m-1)}(u) | \\
  &\leq | \sigma(u) - (m-1)\sigma((m-1)^{-1})u|\cdot\1_{m^{-1}\leq u < (m-1)^{-1}} \\&\quad\quad+ |m\sigma(m^{-1}) - (m-1)\sigma((m-1)^{-1})| u \cdot\1_{0\leq u< m^{-1}}\\
  &\leq | \sigma((m-1)^{-1}) - (m-1)\sigma((m-1)^{-1})m^{-1}|+ |m\sigma((m-1)^{-1}) - (m-1)\sigma((m-1)^{-1})| \cdot m^{-1}\\
  &\leq 2 m^{-1} \sigma((m-1)^{-1}).
\end{align*} Therefore, it turns out that 
\begin{equation}\label{eq:difference of sigma_m and sigma_m-1}
  \| \sigma^{(m)}(u^{(m)}(s,y) )-\sigma^{(m-1)}(u^{(m-1)}(s,y)) \|_2^2 \lesssim m^2\sigma^2(m^{-1})\|\bJ_m(s,y) \|_2^2 + m^{-2}\sigma^2((m-1)^{-1}).
\end{equation} Let $\fJ_m(t) : = \sup_{ x\in \dR} \|\bJ_m(t,x)\|_2^2 $. Then, from \eqref{eq:eq for bold J_m} and \eqref{eq:difference of sigma_m and sigma_m-1}, there exists $C>0$ such that 
\begin{equation*}
  \fJ_m(t) \leq C \left( m^2\sigma^2(m^{-1})\int_0^t (t-s)^{-1/2} \fJ_m(s) ds +  m^{-2}\sigma^2((m-1)^{-1})\right),
\end{equation*} where we have used \eqref{eq:heat kernel bound}. Then, we use the Gr\"onwall inequality as in the proof of Lemma \ref{lem:uniform moment bound for u^m} to obtian that for some $C>0$,
\begin{equation*}
  \fJ_m(t) \leq C m^{-2}\sigma^2((m-1)^{-1}) E_{\frac{1}{2}}\left( C m^2\sigma^2(m^{-1}) t^{\frac{1}{2}} \right).
\end{equation*} Again using the fact $E_{\frac{1}{2}}(z) \leq \exp(z^2) $ for all $z\in \dR$, we have 
\begin{equation}\label{eq:Grownall's inequality for sigma_m}
    \fJ_m(t) \leq C m^{-2}\sigma^2((m-1)^{-1}) \exp\left( C^2 m^4\sigma^4(m^{-1}) T \right),
\end{equation} for all $t\in [0,T]$.
To conclude that $u^{(m)}(t,x)$ is Cauchy, we will show that $\sum_{m=1}^{\infty} (\fJ_m(t))^{1/2} <\infty$. Specifically, for all $n>m\geq 1$, we have
\begin{equation*}\label{eq:difference u^n-u^m}
\| u^{(n)}(t,x) - u^{(m)}(t,x) \|_2 \leq \sum_{k=m+1}^{n} \left(\fJ_k(t) \right)^{1/2}\quad \text{for all }t\geq 0.
\end{equation*}
Now it is clear that to show $\sum_{m=1}^{\infty} (\fJ_m(t))^{1/2} <\infty$ is sufficient for the proof. To prove this, we claim that for any constant $c>0$, there exists $m_1\geq 1$ such that for all $m\geq m_1$
\begin{equation}\label{eq:claim to end Gronwall inequality for mild solution}
m^{-1}\sigma((m-1)^{-1}) \exp ( cm^4\sigma^{4}(m^{-1}) )\leq m^{-3/2}.
\end{equation}
This inequality follows from the assumption \eqref{eq:positivity_condition}. In particular, by \eqref{eq:positivity_condition}, there exist constants $\alpha\in(0,\frac{1}{4})$ and $m_1=m_1(\alpha)>0$ such that for all $m\geq m_1$
\begin{equation}\label{eq:bound for the growh of sigma}
  m \cdot \sigma(m^{-1}) \leq (\log m )^\alpha.
\end{equation} Now we find a constant $m_2= m_2(\alpha)>m_1$ such that for all $m\geq m_2$
\begin{equation*}
\begin{aligned}
    m^{-1}\sigma((m-1)^{-1}) \exp ( cm^4\sigma^{4}(m^{-1}) ) &\leq m^{-2} \log(m)^{\alpha}\cdot \exp (c (m\sigma(m^{-1}))^4)\\
    & \leq  m^{-2} (\log m)^{\alpha} \cdot \exp \left(c (\log m)^{4\alpha} \right) \\
    &\leq m^{-3/2}.
\end{aligned}
\end{equation*} This completes the proof of the claim. Now, from \eqref{eq:Grownall's inequality for sigma_m}, for any $t\in(0,T]$ there exists $C=C(T)>0$ such that 
\begin{equation}\label{eq:bound for J}
   (\fJ_m(t))^{1/2} \leq   \frac{C}{m^{3/2}},
\end{equation} which shows that as $m,n\to \infty$
\begin{equation}\label{eq:Cauchy bound for u^m}
  \sup_{0\leq t \leq T }\sup_{x\in\dR }\| u^{(n)}(t,x) - u^{(m)}(t,x) \|_2 \leq \sum_{k=m+1}^{n} \left(\fJ_k(t) \right)^{1/2} \leq \sum_{k=m+1}^{n} \frac{C}{k^{3/2}} \to 0.
\end{equation} This concludes that $\{u^{(m)}\}_{m=1}^\infty$ is a Cauchy sequence in $C([0,T]\times \dR ; L^2(\Omega)) $. Therefore, for each $(t,x) \in [0,T] \times \dR$, we can define the $L^2-$limit of $u^{(m)}(t,x)$, say $\tilde{u}(t,x)$, which also satisfies  $\tilde{u}\in C([0,T]\times \dR ; L^2(\Omega)) $. One can easily verify that $u$ is indeed a mild  solution to \eqref{eq:SHE}, which is adapted to the filtration $\{\cF_t\}_t$. Indeed,  $u^{(m)} - u^{(n)}$ is equal to ${\bf{N}}^{(m)} - {\bf{N}}^{(n)}$ where ${\bf{N}}^{(m)}$ is defined as 
\begin{equation*}
  {\bf{N}}^{(m)}(t,x) = \int_0^t \int_{\dR} G(t-s,y-x)\sigma^{(m)}(u^{(m)}(s,y))W(dsdy).
\end{equation*} This shows that we can define ${\bf{N}}$ by the limit of ${\bf{N}}^{(m)}$ in $C([0,T]\times \dR ; L^2(\Omega))$, which implies that for all $(t,x)\in[0,T]\times \dR$
\begin{equation*}
  \tilde{u}(t,x) : = (G(t,\cdot ) *u_0)(x) + {\bf N }(t,x) = \lim_{m\to \infty} u^{(m)}(t,x)
\end{equation*} is well-defined. Moreover, $\tilde{u}(t,x)$ is adapted to $\cF_t$ as a stochastic process in $C([0,T]\times \dR ; L^2(\Omega))$. We then can define the It\^o-Walsh integral
\begin{equation*}
\int_0^t \int_{\dR} G(t-s,y-x)\sigma(\tilde{u}(s,y))W(dsdy).
\end{equation*}
To justify this, we note that $\tilde{u}$ satisfies the uniform boundedness of the second moment of $\sigma(\tilde{u})$. Specifically, by \eqref{eq:uniform bound for sigma m}, there exists a constant $C=C(L_\fd, T)>0$ such that
\begin{equation*}
\sup_{0\leq t\leq T}\sup_{x\in \dR}\dE[\sigma^2(\tilde{u}(t,x))] \leq C\left(1+ \sup_{0\leq t\leq T}\sup_{x\in \dR}\dE[|\tilde{u}(t,x)|^2]\right) <\infty.
\end{equation*}
This uniform boundedness ensures the existence of the It\^o-Walsh integral involving $\sigma(\tilde{u})$.
To conclude that $\tilde{u}$ is indeed a mild solution $u$ of \eqref{eq:SHE} (see Definition \ref{def:definition of mild solution}), it remains to show that the above stochastic integral coincides with ${\bf N}(t,x)$ for each $(t,x)\in[0,T]\times \dR$. Once again by \eqref{eq:definition of sigma^m}, we have that 
\begin{equation}\label{eq:difference of sigma and sigma^m}
\begin{aligned}
  |\sigma(\tilde{u}) - \sigma^{(m)}(u^{(m)})| &\leq  |\sigma(\tilde{u}) - \sigma^{(m)}(\tilde{u})| + |\sigma^{(m)}(\tilde{u}) - \sigma^{(m)}(u^{(m)})| \\
  &\leq \sigma(m^{-1}) + m\sigma(m^{-1}) | \tilde{u} - u^{(m)}|.
\end{aligned}
\end{equation} This implies that by the It\^o-Walsh isometry,
\begin{align*}
  \dE&\left[\left|\int_0^t \int_{\dR} G(t-s,y-x)\sigma(\tilde{u}(s,y))W(dsdy) - {\bf N}^{(m)}(t,x) \right|^2\right] \\
  &= \int_0^t \int_{\dR}G^2(t-s,y-x) \| \sigma(\tilde{u}(s,y)) - \sigma^{(m)}(u^{(m)}(s,y)) \|_2^2dy ds.
\end{align*} On the other hand, by \eqref{eq:bound for the growh of sigma} and \eqref{eq:Cauchy bound for u^m}, there exists $C=C(T)>0$ such that for all $(s,y)\in [0,T] \times \dR$
\begin{equation*}
\begin{aligned}
    m\sigma(m^{-1}) \| \tilde{u}(s,y) - u^{(m)}(s,y)\|_2  &= m\sigma(m^{-1} ) \lim_{n\to \infty}\| u^{(n)}(s,y) - u^{(m)}(s,y)\|_2 \\&\leq C  (\log m)^{\alpha} \sum_{k=m+1}^\infty \frac{1}{k^{3/2}},
\end{aligned}
\end{equation*} for all sufficiently large $m\geq 1 $. Therefore, by the dominated convergence theorem, we have 
\begin{equation*}
  \int_0^t \int_{\dR} G(t-s,y-x)\sigma(\tilde{u}(s,y))W(dsdy) = \lim_{m\to \infty} {\bf N}(t,x),
\end{equation*}  which shows that $u:= \tilde{u}$ is indeed a mild solution of \eqref{eq:SHE}. 

The uniqueness of the solution follows in a similar way. Suppose that $u$ and $v$ are solutions to \eqref{eq:SHE} with same initial data. Then, by the It\^o-Walsh isometry and Lemma \ref{lem:property of sigma},
\begin{equation*}
  \begin{aligned}
    \| u (t,x) - v(t,x) \|_2^2 &= \int_0^t \int_{\dR} G^2(t-s,y-x) \| \sigma(u(s,y)) -\sigma(v(s,y)) \|_2^2 dyds \\
    &\lesssim \int_0^t \int_{\dR} G^2(t-s,y-x) \left (n^2 \sigma^2(n^{-1})\| u (s,y) - v(s,y) \|_2^2 + \sigma^2(n^{-1}) \right) dyds,
  \end{aligned}
\end{equation*} for all sufficiently large $n\geq 1 $. As before, we then get 
\begin{equation*}
  \begin{aligned}
    \sup_{x\in \dR} \| u (t,x) - v(t,x) \|_2^2 \lesssim n^2 \sigma^2(n^{-1}) \int_0^t (t-s)^{-1/2} \cdot \sup_{x\in \dR} \| u (s,x) - v(s,x) \|_2^2 ds + \sigma^2(n^{-1}). 
  \end{aligned}
\end{equation*} By the Gr\"onwall inequality, there exists a constant $C>0$ such that 
\begin{equation*}
\begin{aligned}
  \sup_{0\leq t \leq T}\sup_{x\in \dR} \| u (t,x) - v(t,x) \|_2^2  &\leq \sigma^2(n^{-1}) E_{\frac{1}{2}}\left(C n^2 \sigma^2(n^{-1}) T^{1/2}  \right)\\
  &\leq \sigma^2(n^{-1})\exp \left(C^2 n^4 \sigma^4(n^{-1}) T  \right).
\end{aligned}
\end{equation*} By \eqref{eq:bound for the growh of sigma} and the argument below \eqref{eq:bound for the growh of sigma}, we can derive that $\sup_{0\leq t \leq T}\sup_{x\in \dR} \| u (t,x) - v(t,x) \|_2 \to 0 $ as $n\to \infty$. This concludes that $u=v$ in $C([0,T] \times \dR ; L^2(\Omega))$.
\end{proof}

From now on, we assume that $u(t, x)$ is the unique mild solution of \eqref{eq:SHE} from Proposition \ref{prop:existence of strong mild solution}. In the next lemma, we present the moment estimates for the solution $u$ of \eqref{eq:SHE}, which is a key input for the proofs of the H\"older continuity in time and space and also the strict positivity of $u$. 

\begin{lemma}\label{lem:finiteness of moment}  There exist constants $C=C(\|u_0\|_{L^\infty})>0$ and $K=K(L_\fd)>0$ such that for any $T>0$, $p\geq 2 $, and $\delta \in (0, \fd \wedge 1 )$,
\begin{equation}\label{eq:p_th moment of u}
 \sup_{0\leq  t\leq T}\sup_{x\in \dR}\dE [|u(t,x)|^p] \leq C^p \exp\left( K \delta^{-4}\sigma^4(\delta )p^3 T  \right).
\end{equation} 
  \end{lemma}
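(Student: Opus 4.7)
\noindent\textbf{Proof plan for Lemma \ref{lem:finiteness of moment}.}  The strategy is the standard BDG + fractional Gr\"onwall argument, tracking how the effective Lipschitz constant of $\sigma$ depends on the truncation level $\delta$. The key input is the pseudo-Lipschitz estimate \eqref{eq:delta-Lipschitz continuity of sigma} from Lemma \ref{lem:property of sigma}, applied with $v=0$ to turn a Lipschitz bound into a linear growth bound:
\begin{equation*}
|\sigma(u)| \;\leq\; A_\delta\, |u| + \sigma(\delta), \qquad A_\delta := L_\fd \vee \delta^{-1}\sigma(\delta).
\end{equation*}
Since $\delta\in(0,\fd\wedge 1)$ and $v\mapsto\sigma(v)/v$ is nonincreasing on $(0,\fd]$ by Assumption \ref{assumption:basic condition for sigma}(a), we have $\sigma(\delta)\leq\delta\cdot\delta^{-1}\sigma(\delta)\leq A_\delta$ and $\fd^{-1}\sigma(\fd)\leq\delta^{-1}\sigma(\delta)$, which lets us absorb both $L_\fd$ and $\sigma(\delta)$ into multiples of $\delta^{-1}\sigma(\delta)$ with constants depending only on $L_\fd$ and $\fd$. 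Consequently $\|\sigma(u(s,y))\|_p^2\lesssim A_\delta^2\bigl(1+\|u(s,y)\|_p^2\bigr)$.

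\noindent First I would apply the Burkholder--Davis--Gundy inequality (see \cite[Theorem B.1]{khoshnevisan2014analysis}) to the stochastic integral in the mild formulation \eqref{eq:definition of mild solution} to obtain, for every $p\geq 2$,
\begin{equation*}
\|u(t,x)\|_p^2 \;\leq\; 2\|u_0\|_{L^\infty}^2 + C p \int_0^t \int_\dR G^2(t-s,x-y)\,\|\sigma(u(s,y))\|_p^2 \,dy\,ds,
\end{equation*}
where the deterministic contribution is bounded using $\|G(t,\cdot)*u_0\|_{L^\infty}\leq\|u_0\|_{L^\infty}$. Substituting the linear growth bound above and using \eqref{eq:heat kernel bound}, the function $M(t):=\sup_{x\in\dR}\|u(t,x)\|_p^2$ (finite in $t\in[0,T]$ by Proposition \ref{prop:existence of strong mild solution} and Lemma \ref{lem:uniform moment bound for u^m}) satisfies
\begin{equation*}
M(t) \;\leq\; C_1\|u_0\|_{L^\infty}^2 + C_2\, p\, A_\delta^2 \int_0^t (t-s)^{-1/2}\bigl(1+M(s)\bigr)\,ds,
\end{equation*}
with $C_1, C_2$ depending only on $L_\fd$.

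\noindent Next I would invoke the generalized Gr\"onwall inequality for singular kernels (\cite[Corollary 2]{ye2007generalized}), exactly as in the proof of Lemma \ref{lem:uniform moment bound for u^m}, to deduce
\begin{equation*}
M(t) \;\leq\; C_3\bigl(1+\|u_0\|_{L^\infty}^2\bigr)\, E_{1/2}\!\left(C_4\, p\, A_\delta^2\, t^{1/2}\right),
\end{equation*}
and then use $E_{1/2}(z)\lesssim\exp(z^2)$ to get $M(t)\leq C^2\exp(K_0\, p^2 A_\delta^4\, t)$. Raising both sides to the $p/2$ power and replacing $A_\delta^4$ by its majorant $C(L_\fd,\fd)\,\delta^{-4}\sigma^4(\delta)$ (justified in the first paragraph) yields
\begin{equation*}
\dE\bigl[|u(t,x)|^p\bigr] \;\leq\; M(t)^{p/2} \;\leq\; C^p \exp\!\left(K\,\delta^{-4}\sigma^4(\delta)\, p^3\, T\right),
\end{equation*}
which is exactly \eqref{eq:p_th moment of u}. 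I do not anticipate a serious obstacle; the only care needed is bookkeeping the $\delta$-dependence of constants, in particular checking that the additive term $\sigma(\delta)$ and the constant $L_\fd$ can both be dominated by $\delta^{-1}\sigma(\delta)$ up to constants depending only on $\fd$ and $L_\fd$, so that the moment estimate acquires the announced scaling $\delta^{-4}\sigma^4(\delta)\, p^3$.
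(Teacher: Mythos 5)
Your proof is correct and achieves the stated bound, but you take a slightly different route from the paper. Both arguments start identically: apply BDG to the mild formulation, use the linear‑growth bound $|\sigma(u)| \leq A_\delta|u|+\sigma(\delta)$ with $A_\delta = L_\fd \vee \delta^{-1}\sigma(\delta)$ derived from \eqref{eq:delta-Lipschitz continuity of sigma}, and absorb $\sigma(\delta)\leq A_\delta$. Where you diverge is in closing the resulting singular integral inequality: you reuse the fractional Gr\"onwall/Mittag--Leffler technique already employed in Lemma~\ref{lem:uniform moment bound for u^m}, whereas the paper instead introduces the weighted norm $\cN_{\alpha,p}(\Phi)=\sup_{t,x}e^{-\alpha t}\|\Phi(t,x)\|_p$, establishes a self‑improving inequality $\cN_{\alpha,p}(u)\leq C_1'+C_4 p\alpha^{-1/2}(\sigma^2(\delta)+\delta^{-2}\sigma^2(\delta)\cN_{\alpha,p}(u))$, and picks $\alpha=K\delta^{-4}\sigma^4(\delta)p^2$ so that the coefficient of $\cN_{\alpha,p}(u)$ is at most $1/4$. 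The two methods are interchangeable here and give the same scaling $p^3\delta^{-4}\sigma^4(\delta)T$; yours has the minor virtue of consistency with the earlier truncated estimate, while the paper's weighted‑norm route avoids invoking a separate Gr\"onwall theorem and makes the choice of rate $\alpha$ explicit. One small caveat worth noting: your step bounding $L_\fd\leq C\,\delta^{-1}\sigma(\delta)$ needs $C=L_\fd\fd/\sigma(\fd)$, which depends on $\sigma(\fd)$ and not just on $L_\fd$ and $\fd$ as stated; the paper sidesteps this by simply assuming WLOG that $\delta^{-1}\sigma(\delta)\geq L_\fd$ (``otherwise the proof is much simpler''), which amounts to the same bookkeeping and is harmless for the theorem's conclusion.
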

\begin{proof} 
Let $\Phi:= \{ \Phi(t,x) \}_{(t,x) \in \dR_+\times \dR}$ be a random field, and for every $\alpha >0$ and $p\geq 2$,  define 
\begin{equation}\label{eq:definition of N norm}
  \cN_{\alpha, p}(\Phi): = \sup_{t\geq 0}\sup_{x\in \dR}\left(e^{-\alpha t}\| \Phi(t,x)\|_p \right).
\end{equation} Note that $\cN_{\alpha, p}(u)<\infty$, since by Fatou's lemma and Lemma \ref{lem:uniform moment bound for u^m}, we have that for all $x\in \dR$ 
\begin{equation*}
 \|u(t,x)\|_p \leq \liminf_{m\to \infty} \|u^{(m)}(t,x) \|_p <\infty. 
 \end{equation*} We write $u(t,x) $ as
\begin{equation*}
  u(t,x)  = {\bf I}(t,x)+ \bN(t,x), \quad \text{for all }(t,x)\in \dR_+\times \dR,
\end{equation*} where $ {\bf I}(t,x) : = (G(t,\cdot)*u_0)(x) $ and  $\bN(t,x)$ is defined as 
\begin{equation*}
  \bN(t,x) : = \int_0^t\int_{\dR} G(t-s,y-x)\sigma(u(s,y)) W(dsdy).
\end{equation*} Let us fix an arbitrary $\delta \in (0, \fd \wedge 1 )$. From \eqref{eq:delta-Lipschitz continuity of sigma} with $v=0$, we can see that for all $u\in \dR$
\begin{equation}\label{eq:upper bound of sigma(u)}
  |\sigma(u)| \leq \left| (\delta^{-1} \sigma(\delta) \vee L_\fd) u \right| + \sigma(\delta).
\end{equation} From now on, we assume that $\delta^{-1}\sigma(\delta)\geq L_\fd$, otherwise the proof is much simpler.  For $p\geq 2 $, we apply the BDG inequality and  to get 
\begin{equation}\label{eq:moment bound for the stochastic term}
\begin{aligned}
    \| \bN(t,x) \|_p^2 &\leq C p \int_0^t\int_{\dR} G^2(t-s, y-x) \| \sigma(u(s,y)) \|_2^p dyds\\
    &\leq Cp \int_0^t\int_{\dR} G^2(t-s, y-x) \left( \delta^{-2} \sigma^2(\delta)\| u(s,y) \|_2^p  + \sigma^2(\delta)\right) dyds\\
    &\leq Cp  \delta^{-2} \sigma^2(\delta)\int_0^t\int_{\dR} G^2(t-s, y-x)\| u(s,y) \|_2^p   dyds + C p  \sigma^2(\delta) t^{1/2},
\end{aligned}
\end{equation} for some constant $C=C(L_\fd)>0$. We note that $C$ depends on $L_\fd$ due to \eqref{eq:upper bound of sigma(u)}. On the other hand, there exists $C_1>0$ only depending on $\|u_0\|_{L^\infty}$ such that 
  \begin{equation}\label{eq:estimates for Initial data}
    \| \bI(t,x) \|_p^2 \leq C_1  \quad \text{for all }p\geq2, (t,x) \in \dR_+\times \dR.
  \end{equation}By \eqref{eq:moment bound for the stochastic term} and \eqref{eq:estimates for Initial data}, we see that there exist constants $C'_1= C'_1(\|u_0\|_{L^\infty})>0$ and  $C_2 = C_2(L_\fd)>0$ satisfying 
\begin{equation}\label{eq:estimates for p_th moment of u greater than delta via BDG}
\begin{aligned}
      \sup_{x\in\dR}\|u(t,x)\|_p^2 \leq  C'_1 + C_2p\left(\sigma^2(\delta) t^{1/2} + \delta^{-2}\sigma^2(\delta)\int_0^t \int_{\dR}G^2(t-s,y-x)\sup_{x\in \dR} \|u(s,x)\|_p^2 dyds \right).
\end{aligned}
 \end{equation} For the third term on the RHS, by the definition of $\cN_{\alpha,p}$ (see \eqref{eq:definition of N norm}), there exists a universal constant $C_3>0$ such that 
 \begin{equation*}
 \begin{aligned}
    \int_0^t \int_{\dR}G^2(t-s,y-x)\sup_{x\in \dR} \|u(s,x)\|_p^2 dyds  &\leq \left[\cN_{\alpha,p}( u)\right]^2\int_0^t\int_{\dR} e^{2\alpha s} G^2(t-s,y-x)dyds\\
    &= \left[\cN_{\alpha,p}( u)\right]^2  \int_0^t e^{2\alpha s }(4\pi (t-s))^{-1/2} ds\\
    &= \left[\cN_{\alpha,p}( u)\right]^2 e^{2\alpha t} \int_0^t e^{-2\alpha r}(4\pi r)^{-1/2} dr\\
    & \leq C_3\left[\cN_{\alpha,p}( u)\right]^2 \alpha^{-1/2},
 \end{aligned}
 \end{equation*}  where we have used the following calculations:
 \begin{equation*}
  \int_{\dR} G^2(t-s,y-x)^2 dy = (4\pi (t-s))^{-1/2},
 \end{equation*} and 
 \begin{equation*}
  \int_0^\infty e^{-2\alpha r}(4\pi r)^{-1/2} dr \lesssim \alpha^{-1/2}.
 \end{equation*} Therefore, by multiplying $e^{-2\alpha t}$ and taking the supremum over $t\geq 0$, there exists $C_4=C_4(L_\fd)>0$  such that
 \begin{equation*}
 \begin{aligned}
   \cN_{\alpha,p} (u  ) \leq C'_1 + C_4 p\left(  \alpha^{-1/2}\sigma^2(\delta)  + \alpha^{-1/2}\delta^{-2}\sigma^2(\delta)  \cN_{\alpha,p} (u )  \right). 
 \end{aligned}
 \end{equation*} Choose $\alpha  = K \delta^{-4}\sigma^4(\delta)p^2$ with $K=K(L_\fd)>0$ satisfying 
 \begin{equation*}
  C_4 p\left[ (\alpha^{-1/2}\sigma^2(\delta)) \vee (\alpha^{-1/2}\delta^{-2}\sigma^2(\delta))\right] \leq \frac{1}{4}.
 \end{equation*} We remark that the constant $K$ is independent of $\delta\in(0,\fd\wedge 1 )$ and $p\geq 2 $. This ensures that
 \begin{equation*}
   \cN_{\alpha,p} (u ) \leq C'_1+ \frac{1}{4}\left( 1+  \cN_{\alpha,p} (u )\right),
 \end{equation*} which implies 
\begin{equation*}
    \cN_{\alpha,p} (u) \leq C: =  \frac{4}{3}\left( C'_1 + \frac{1}{4} \right).
  \end{equation*} Now it turns out that for all $p\geq 2 $ and $T>0$,
  \begin{equation*}
    \sup_{0\leq t \leq T}\sup_{x\in \dR}\dE |u(t,x)|^p\leq C^p \exp\left( K \delta^{-4}\sigma^4(\delta )p^3 T  \right).
  \end{equation*} This concludes the proof of \eqref{eq:p_th moment of u}.
\end{proof}

We can now provide the proof of Theorem \ref{thm:strict positivity} (i), which establishes the existence, uniqueness, and H\"older continuity of a mild solution. We will also prove Theorem \ref{thm:weak comparison principle}, which states the weak comparison principle. 

\begin{proof}[\textbf{Proof of Theorem \ref{thm:strict positivity} (i)}]
  The existence and uniqueness of a mild solution follows from Proposition \ref{prop:existence of strong mild solution}. For the H\"older continuity of $u$, the proof follows the same lines of the proof of \cite[Theorem 3.1]{chen2014holder} with Lemma \ref{lem:finiteness of moment} and the fact that $\dE[\sigma^2(u(t,x))] \leq C(1+ \dE[|u(t,x)|^2]) $. 
\end{proof}

\begin{proof}[\textbf{ Proof of Theorem \ref{thm:weak comparison principle}}]
By Proposition \ref{prop:existence of strong mild solution}, we see that $u^{(m,1)}(t,x) \to u^{(1)}(t,x)$ and $u^{(m,2)}(t,x) \to u^{(2)}(t,x)$ in $L^2(\Omega)$ for each $(t,x)\in \dR_+\times \dR$ with initial data $u^{(1)}_0$ and $u^{(2)}_0$ respectively. In addition, $u^{(1)}$ and $u^{(2)}$ are both continuous in $\dR_+\times \dR$ by Theorem \ref{thm:strict positivity} (i). Thus, it is clear that Lemma \ref{lem:weak comparison for u^m} completes the proof of Theorem \ref{thm:weak comparison principle}.
\end{proof}

\section{Strict positivity}\label{sec:strict positivity}
In this section, our aim is to prove Theorem \ref{thm:strict positivity} (ii). We start by relaxing the problem into simpler setting. We claim that to prove strict positivity, it is sufficient to consider the solution $u$ of the SPDE
\begin{equation}\label{eq:SPDE for strict positivity}
  \begin{aligned}
    \begin{cases}
      \partial_tu(t,x) = \frac{1}{2}\partial^2_x u(t,x) + \sigma(u(t,x))\dot{W}(t,x), \quad(t,x)\in \dR_+ \times \dR,\\
      u(0,x) = \1_{[-r,r]}(x), \quad x\in\dR, 
    \end{cases}
  \end{aligned} 
\end{equation} where $r>0$. Indeed, since we assumed that $u_0$ is a nonnegative function in $C(\dR)$ which is not identically zero, by the weak comparison principle (Theorem \ref{thm:weak comparison principle}), it suffices to consider $u_0(x) = c\1_{[a,b]}$ for any $a,b\in \dR$ and $c>0$. Appealing to the translation $u_0(\cdot)\mapsto u_0(\cdot-(a+b)/2)$, we can assume that $u_0(x) = c\1_{[-r,r]}$ for some $r>0$. Moreover, we can consider $\bar{u}(t,x) : = c\cdot u(t,x)$ which is the unique mild solution of \eqref{eq:SHE} with the initial function $\1_{[-r,r]}$ and with the nonlinearity $c\cdot \sigma(c^{-1}x)$ instead of $\sigma(x)$. Observe that $c \cdot \sigma(c^{-1}\cdot)$ satisfies Assumption \ref{assumption:basic condition for sigma} and \eqref{eq:positivity_condition} if $\sigma$ does, with $c\cdot \fd $ instead of $\fd$. In particular, we emphasize that Assumption \ref{assumption:basic condition for sigma} (b) holds with $L_{c\fd} = L_\fd$. Thus, in the rest of the section we only consider the solution $u$ of \eqref{eq:SPDE for strict positivity} with the initial function of the form $u_0 = \1_{[-r,r]}$ and $\sigma$ satisfying Assumption~\ref{assumption:basic condition for sigma} and \eqref{eq:positivity_condition}.

\subsection{Probability estimates for strict positivity}

In what follows, we present the main contribution of this section, which will be iteratively used to prove Theorem \ref{thm:strict positivity} (ii). The following proposition provides the lower tail estimate for the unique mild solution $u$ to \eqref{eq:SPDE for strict positivity} with the initial function $u_0 =\1_{[-r,r]} $ for $r>0$, which is a generalization of similar estimates in \cite{chen2017comparison,mueller1991support,shiga1994two} into a larger class of non-Lipschitz nonlinearity $\sigma$. The main input of the proof is the moment estimate in Lemma \ref{lem:finiteness of moment} (see the proof of Lemma \ref{lem:estimation of stochastic term}).

\begin{proposition}\label{prop:key prop for strict positivity} Fix $\fr>0$. Let $u$ be the unique mild solution to \eqref{eq:SPDE for strict positivity} with an initial condition $u_0=\1_{[-r,r]}$ and satisfy Assumption~\ref{assumption:basic condition for sigma} and \eqref{eq:positivity_condition}. For any $T>0$, $M>\fr$, $\eta\in(0,1/4)$, and $\delta \in(0,\fd\wedge 1 )$, there exists $C = C(T,M, L_\fd,\eta)>0 $ and $m_0 = m_0(T,M,L_\fd,\eta,\delta,\fr)>0$ such that for all $m\geq m_0$ and $r\in[\fr,2M]$
  \begin{equation}\label{eq:key probability estimate for stric positivity}
    \begin{aligned}
        \dP&\left(u(s,x) \geq \eta \1_{[-r-M/m, r+ M/m]}(x) \text{ for all }(s,x) \in \left[ \frac{T}{2m}, \frac{T}{m}\right]\times \dR \right) \\
        &\quad \geq 1-  \exp \left\{-C m^{1/2}(\log m )^{3/2}\left(\frac{\delta }{\sigma(\delta)} \right)^2  \right\}.
    \end{aligned}      
   \end{equation} 
\end{proposition}
\begin{remark}\label{rmk:uniform_in_r} We point out that the constants $C$ and $m_0$ in Proposition \ref{prop:key prop for strict positivity} are independent of $r\in[\fr,2M]$ while $m_0$ depends on the lower bound $\fr$. The uniform-in-$r$ bound \eqref{eq:key probability estimate for stric positivity} is crucial since we will exploit Proposition \ref{prop:key prop for strict positivity} repeatedly with $r=\fr + \frac{kM}{m}$ for $k=0,1,...,m-1$ and $m\geq m_0$ instead of a fixed $r$ (see Section \ref{subsec:proof of positivity}).

\end{remark}
To prove Proposition \ref{prop:key prop for strict positivity}, we consider the unique mild solution of \eqref{eq:SPDE for strict positivity}, as in the previous section. Specifically, we work with the following integral equation for $u$:
\begin{equation*}
  u(t,x)  = (G(t,\cdot)*\1_{[-r,r]})(x) + \bN(t,x), \quad \text{for all }(t,x)\in \dR_+\times \dR,
\end{equation*} where the stochastic term $\bN(t,x)$ is defined as 
\begin{equation}\label{eq:noise} 
  \bN(t,x) : = \int_0^t\int_{\dR} G(t-s,y-x)\sigma(u(s,y)) W(dsdy).
\end{equation} The following two lemmas are essential to prove Proposition \ref{prop:key prop for strict positivity}.

\begin{lemma}\label{lem:lower bound for the heat propagation} 
Fix $\fr>0$. Let $T,M>0$ and $\eta\in(0,1/4)$. There exists a constant $m_0=m_0(T,M,\eta,\fr)\geq 1 $ such that for all $m\geq m_0$ and $r\geq \fr$ we have 
\begin{equation}\label{eq:lower bound for the heat propagation}
  \inf_{s\in[T/(2m),T/m ]}(G(s,\cdot) * \1_{[-r,r]})(x) \geq  2\eta\1_{[-r -M/m, r+ M/m]}(x) \quad \text{for all } x\in \dR.
\end{equation} 
\end{lemma}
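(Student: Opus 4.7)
The claim is a deterministic Gaussian heat-kernel calculation, so my plan is to verify it by direct computation. First, I would write
$$(G(s, \cdot) * \1_{[-r, r]})(x) = \int_{-r}^{r} \frac{1}{\sqrt{2\pi s}} \exp\!\left(-\frac{(x-y)^2}{2s}\right) dy = \int_{x-r}^{x+r} G(s, z)\, dz,$$
and note that this function of $x$ is even and strictly decreasing in $|x|$; this follows from differentiating the last integral in $x$ and using the evenness and unimodality of $G(s, \cdot)$. Hence the infimum over $x \in [-r - M/m, r + M/m]$ is attained at the endpoints $x = \pm(r + M/m)$, and by symmetry it suffices to bound the value at $x = r + M/m$ from below by $2\eta$.

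After the substitution $z = (x-y)/\sqrt{s}$, I would rewrite this boundary value as
$$(G(s, \cdot) * \1_{[-r, r]})(r + M/m) = \Phi\!\left(\frac{2r + M/m}{\sqrt{s}}\right) - \Phi\!\left(\frac{M/m}{\sqrt{s}}\right),$$
where $\Phi$ is the standard normal distribution function. For $s \in [T/(2m), T/m]$, the lower argument obeys $0 < M/(m\sqrt{s}) \leq M\sqrt{2/(Tm)}$, and the upper argument obeys $(2r + M/m)/\sqrt{s} \geq 2r\sqrt{m/T}$. As $m \to \infty$ the former tends to $0$ and the latter to $+\infty$, both uniformly in $s$ in the specified range. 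Consequently, the right-hand side converges to $\Phi(\infty) - \Phi(0) = \tfrac{1}{2}$ uniformly in $s$.

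Since $2\eta < 1/2$ by hypothesis, I can choose $m_0$ large enough so that the above difference exceeds $2\eta$ for all $m \geq m_0$ and all $s \in [T/(2m), T/m]$, yielding the lower bound on $[-r - M/m, r + M/m]$. Outside that interval the inequality reduces to $(G(s, \cdot) * \1_{[-r, r]})(x) \geq 0$, which is immediate from positivity of the heat kernel. There is no serious obstacle: the argument is a routine interval estimate for the normal CDF, the only care being the uniformity in $s$ (and the implicit dependence of $m_0$ on $r$ through the rate at which $2r\sqrt{m/T} \to \infty$).
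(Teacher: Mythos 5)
Your proof is correct and takes essentially the same route as the paper: both rewrite the convolution as a probability that a standard Gaussian lands in an interval, and both show this probability tends to $1/2$ uniformly for $s \in [T/(2m), T/m]$ as $m \to \infty$, using that one endpoint of the rescaled interval tends to $0$ and the other to $\pm\infty$. The only cosmetic difference is that you first argue by monotonicity of $x \mapsto \int_{x-r}^{x+r} G(s,z)\,dz$ that the infimum over $|x| \leq r + M/m$ is attained at the endpoints before computing, whereas the paper bounds the rescaled interval uniformly over all such $x$ directly; the two are equivalent in effort. Your remark that $m_0$ implicitly depends on $r$ (through the rate at which $2r\sqrt{m/T} \to \infty$) is correct and matches the paper's explicit requirement $m > \sqrt{2}M/r$, despite the lemma's stated dependence being only $m_0(T,M,\eta)$; in the paper's application $r$ is a fixed positive number and the iterated radii only increase, so this is harmless.
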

\begin{proof}
  The proof follows similarly as in the proof of \cite[Lemma 4.1]{chen2017comparison}. Let $Z$ be a standard normal distribution with the density $G(1,x)$. For the case where $-r-M/m\leq x \leq 0$, for all $ s\in[T/(2m),T/m ]$, we have that whenever $m>\sqrt{2}M/\fr$, 
  \begin{equation*}
    \begin{aligned}
      (G(s,\cdot) * \1_{[-r,r]})(x) &= \int_{-r}^r G(s,x-y) dy \\
      &= \int_{(x-r)/\sqrt{s}}^{(x+r)/\sqrt{s}} G(1, y) dy  \\
      &\geq \dP \left( -\frac{r\sqrt{m}}{\sqrt{T}} \leq Z \leq -\frac{\sqrt{2}M}{\sqrt{mT}} \right).
    \end{aligned}
  \end{equation*} Similarly, in the case $0\leq x\leq r+  M/m$, we have 
  \begin{equation*}
    \begin{aligned}
            (G(s,\cdot) * \1_{[-r,r]})(x)  &\geq \dP \left(\frac{\sqrt{2}M}{\sqrt{mT}} \leq  Z \leq \frac{r\sqrt{m}}{\sqrt{T}} \right).
    \end{aligned}
  \end{equation*} Note that as $m\to \infty$, 
\begin{equation*}
  \dP \left( -\frac{\fr\sqrt{m}}{\sqrt{T}} \leq Z \leq -\frac{\sqrt{2}M}{\sqrt{mT}} \right) \wedge \dP \left(\frac{\sqrt{2}M}{\sqrt{mT}} \leq  Z \leq \frac{\fr\sqrt{m}}{\sqrt{T}} \right)\to \frac{1}{2}.
\end{equation*}
   Therefore, due to $r\geq \fr$, we can choose $m_0=m_0(T,M,\eta , \fr)\geq 1 $ large so that \eqref{eq:lower bound for the heat propagation} holds for all $m\geq m_0$.
\end{proof}

\begin{lemma}\label{lem:estimation of stochastic term} Fix $\fr>0$. Let $T,\eta>0$, $M>\fr$ and $\delta \in(0,\fd\wedge 1 )$.  Then, there exist some constants $C = C(T,M, L_\fd,\eta)>0 $ and $m_0 = m_0(T,M,L_\fd, \eta,\delta)>0$ such that for all  $m\geq m_0$  and $r\in[\fr, 2M]$ 
\begin{equation*}
  \dP \left( \sup_{\substack{(s,y)\in S } } |\bN(s,x)| \geq \eta \right) \leq\exp \left\{-C m^{1/2}(\log m )^{3/2}\left(\frac{\delta }{\sigma(\delta)} \right)^2  \right\},
\end{equation*} where $S : = \left\{ (s,y) : 0\leq s\leq T/m, |y| \leq r+  M/m \right\}$.
\end{lemma}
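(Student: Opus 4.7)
The plan is to combine a Burkholder-Davis-Gundy moment bound for $\bN$ with a chaining argument and Markov's inequality, optimizing the exponent $p$ so as to capture the factor $m^{1/2}(\log m)^{3/2}$. Applying the pseudo-Lipschitz estimate of Lemma~\ref{lem:property of sigma} to write $|\sigma(u)|\leq L|u|+\sigma(\delta)$ with $L:=\delta^{-1}\sigma(\delta)\vee L_\fd$, and using BDG together with the moment bound of Lemma~\ref{lem:finiteness of moment} restricted to the time window $[0,T/m]$, I expect to obtain, for each $(t,x)\in S$ and $p\geq 2$,
\[
\|\bN(t,x)\|_p \leq C\sqrt{p}\,(T/m)^{1/4}\,L\,\Lambda_p,\qquad \Lambda_p:=C\exp\!\bigl(K\delta^{-4}\sigma^4(\delta)p^2 T/m\bigr).
\]

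To pass from the pointwise bound to the supremum over $S$, I would use analogous increment estimates for $\bN$ combined with a Kolmogorov-type chaining argument. A standard heat-kernel computation gives $\int_0^t\!\int_{\dR}(G(t-s,y-x)-G(t-s,y-x'))^2\,dy\,ds \leq C\min(|x-x'|,\sqrt{t})$, so on the short interval $[0,T/m]$ both the spatial and temporal $L^p$-increments of $\bN$ are uniformly capped at the pointwise scale $C\sqrt{p}(T/m)^{1/4}L\Lambda_p$. This cap is crucial: a naive $|x-x'|^{1/2}$ bound would introduce a spurious factor $M^{1/2}$ that is independent of $m$ and would spoil the final estimate. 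Threading this saturated increment bound through a chaining argument on the compact set $S$ costs only a polylogarithmic-in-$m$ factor, yielding
\[
\bigl\|\sup_{(s,y)\in S}|\bN(s,y)|\bigr\|_p \leq C\sqrt{p}\,(T/m)^{1/4}\,L\,\Lambda_p\,(\log m)^{a}
\]
for some $a>0$ that will be harmless in the final step.

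Finally, I apply Markov's inequality and optimize over $p$. Choosing $p_*\asymp (\delta/\sigma(\delta))^2\sqrt{m\log m}$ (with a small implicit constant), the exponent $K\delta^{-4}\sigma^4(\delta)p_*^2 T/m$ becomes only $O(\log m)$, so $\Lambda_{p_*}$ is polynomial in $m$; meanwhile the $p_*$-th power of $(T/m)^{1/4}$ contributes $-(p_*/4)\log m$ to the log-probability. Careful bookkeeping then produces several competing contributions of size $(\delta/\sigma(\delta))^2 m^{1/2}(\log m)^{3/2}$, and by tuning the implicit constant in $p_*$ sufficiently small (depending on $K,T,L_\fd,M,\eta$) the negative term dominates, producing the desired tail bound. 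The main obstacle is the chaining step: exploiting heat-kernel localization to keep all increments on $[0,T/m]\times S$ at the pointwise scale $(T/m)^{1/4}$, rather than at the worst-case spatial diameter $\sim M^{1/2}$, is what allows the polylogarithmic overhead from chaining to be absorbed by the exponential in the final Markov estimate.
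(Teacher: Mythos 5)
Your strategy mirrors the paper's in its skeleton: BDG plus Lemma~\ref{lem:finiteness of moment} for moments of $\bN$ on the short window $[0,T/m]$, a chaining bound for $\sup_{S}|\bN|$, Chebyshev, and optimization over $p$ with $p_*\asymp(\delta/\sigma(\delta))^2\sqrt{m\log m}$ so that the dominant negative contribution $-(p_*/4)\log m$ produces the factor $m^{1/2}(\log m)^{3/2}$. Where you diverge is in how the $(T/m)^{1/4}$ scaling of the supremum is extracted without picking up the worst-case spatial diameter $\sim M^{1/2}$. The paper exploits the observation that $\bN(0,y)=0$ for every $y$: writing
\begin{equation*}
\sup_{(s,y)\in S}\frac{|\bN(s,y)|}{(T/m)^{\theta/4}}\;\leq\;\sup_{(s,y)\in S}\frac{|\bN(s,y)-\bN(0,y)|}{s^{\theta/4}}\;\leq\;\sup_{(s,y)\neq(s',y')\in S}\frac{|\bN(s,y)-\bN(s',y')|}{\bigl(|y-y'|^{1/2}+|s-s'|^{1/4}\bigr)^{\theta}},
\end{equation*}
and then applying the quantitative Kolmogorov continuity theorem directly to the space-time H\"older seminorm, using only the standard $(|y-y'|^{1/2}+|s-s'|^{1/4})$ increment estimate. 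This makes the $(T/m)^{\theta/4}$ factor appear for free, with no refinement of the heat-kernel bound needed. You instead invoke the localized estimate $\int_0^t\!\int(G(t-s,y-x)-G(t-s,y-x'))^2\,dy\,ds\lesssim\min(|x-x'|,\sqrt{t})$ to cap spatial increments at scale $(T/m)^{1/4}$; this bound is correct and the resulting chaining would go through, but it is harder to make quantitative (you leave the polylog overhead unspecified, and the ``saturated'' modulus of continuity does not plug directly into the off-the-shelf Kolmogorov theorem the paper cites), and it reproduces with more work what the $\bN(0,\cdot)=0$ trick gives immediately. One small bookkeeping remark: the Kolmogorov-type argument produces $(T/m)^{\theta p/4}$ with $\theta<1-6p^{-1}$, not $(T/m)^{p/4}$; this only shifts harmless constants, but your claim of the clean exponent $1/4$ would need the same caveat.
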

\begin{proof} We begin the proof by applying the BDG inequality, \cite[Proposition~4.4]{chen2014holder}, and Lemma \ref{lem:finiteness of moment} to get that  there exists $C_1>0$ such that for all $p\geq 2 $, $\delta \in(0,\fd\wedge 1 )$ and $(s,y),(s',y')\in S$, 
\begin{equation*}
\begin{aligned}
    \dE[|\bN(s,y) - \bN(s',y') |^p] &\leq C_1^p ( | y-y'|^{1/2} + |s-s'|^{1/4})^p  \cdot \sup_{(t,x)\in S}\| \sigma(u (t,x)) \|_p^p \\
      &\leq C_1^p \bM_{T,p,\delta} ( | y-y'|^{1/2} + |s-s'|^{1/4})^p, \end{aligned}
\end{equation*}  where 
\begin{equation*}
  \bM_{T,p,\delta} : =  C^p \exp \left\{ Kp^3 \left( \frac{T}{m}\right) \left( \frac{\sigma(\delta) }{\delta}\right)^4\right\}.
\end{equation*} with some constant $C>0$ and the constant $K=K(L_\fd)>0$ in Lemma \ref{lem:finiteness of moment}. Here, we used the fact that $\| \sigma(u (t,x)) \|_p^p \leq  2^p{L_{\fd}}^p (1 +\| u (t,x) \|_p^p) $.  On the other hand, for all $\theta \in  (0 , 1-6p^{-1})$ and $p>6$,  we have
\begin{equation*}
  \begin{aligned}
    \dE\left[ \sup_{(s,y)\in S} \left|\frac{ \bN (s,y)}{(T/m)^{\theta/4}} \right|^p \right] &\leq \dE\left[ \sup_{(s,y)\in S} \left|\frac{ \bN (s,y) - \bN(0,y)}{s^{\theta/4}} \right|^p \right]\\
    &\leq \dE\left[ \sup_{\substack{(s,y),(s',y')\in S\\  (s,y)\neq (s',y')}} \left|\frac{ \bN (s,y) - \bN(s',y')}{(|y-y'|^{1/2} + |s-s'|^{1/4})^{\theta}} \right|^p \right],
  \end{aligned}
\end{equation*} where we have used that $\bN(0,x) = 0$ for all $x\in\dR$. Note that  $S\subset [0,3M]^2 $ for all $m\geq 1 $ and $r\in[\fr,2M]$. Then, by the quantitative form of the Kolmogorov continuity theorem (see e.g. \cite[Theorem C.6]{khoshnevisan2014analysis}), we find that there exists $C_2=C_2(M,L_\fd)> 0 $  such that for all $p>6$ and $m\geq 1$
\begin{equation}\label{eq:moment estimate of supremum of N}
  \dE\left[ \sup_{(s,y)\in S} \left| \bN (s,y) \right|^p \right] \leq C_2^p(T/m )^{\frac{\theta p }{4}}\exp \left\{ Kp^3 \left( \frac{T}{m}\right) \left( \frac{\sigma(\delta) }{\delta}\right)^4\right\}.
\end{equation} This implies that for all $\eta>0$,
\begin{equation}\label{eq:chebyshev ineq for stochastic term}
\begin{aligned}
  \eta^{-p} \dE \left[ \sup_{(s,y)\in S} \left| \bN (s,y) \right|^p \right] &\leq \eta^{-p}C_2^p (T/m )^{\frac{\theta p }{4}} \exp \left\{ Kp^3 \left( \frac{T}{m}\right) \left( \frac{\sigma(\delta) }{\delta}\right)^4\right\}  \\
  &\leq \exp\left\{ p \left( \log C_2+ \log (\eta^{-1})  - \frac{\theta  }{4} \log \frac{m}{T} \right)+ K p^3  \left( \frac{\sigma(\delta) }{\delta}\right)^4 \frac{T}{m} \right\} .
\end{aligned}
\end{equation} We can choose a sufficiently large $m_1:=m_1(T,M,L_\fd , \eta)\geq 1 $ such that for all $m\geq m_1$ 
\begin{equation*}
  \log C_2+ \log (\eta^{-1})  - \frac{\theta  }{4} \log \frac{m}{T} < 0.
\end{equation*} Notice that we fixed an arbitrary $\theta \in (0,p)$ for some $p>6$ which will be chosen below , thus we drop the dependence on $\theta$.  We find that the function $ p \mapsto f(p) :  = ap^3 - bp $ for $a,b>0$ is minimized at $p_0= \sqrt{b/(3a)}$ and $f(p_0) = -2 (b/3)^{3/2}a^{-1/2}$. Keep this in mind and let 
\begin{equation*}
  p_1= \left( \frac{\frac{\theta  }{4} \log  (m/T)-\log C_2-\log (\eta^{-1})  }{ 3K   \left( \frac{\sigma(\delta) }{\delta}\right)^4 \frac{T}{m} }\right)^{1/2}.
\end{equation*} We can take $m_2= m_2(T,M,L_\fd,\eta,\delta)>0 $  that guarantees $p_1>6$ for all $m\geq m_2$. With this choice of $p=p_1$ and by the Chebyshev inequality, we obtain from \eqref{eq:chebyshev ineq for stochastic term} that there exists  $C = C(T,M,\eta, L_\fd)>0 $ such that 
\begin{equation*}
  \begin{aligned}
    \dP \left( \sup_{\substack{(s,y)\in S } } |\bN(s,x)| \geq \eta \right)\leq \eta^{-p} \dE \left[ \sup_{(s,y)\in S} \left| \bN (s,y) \right|^p \right] &\leq \exp \left\{-C m^{1/2}(\log m )^{3/2}\left(\frac{\delta }{\sigma(\delta)} \right)^2  \right\}.
  \end{aligned}
\end{equation*} We note that $C$ and $m_2$ depend on $L_\fd$ since $K$ does. This completes the proof.

\end{proof}
  
The proof of Proposition \ref{prop:key prop for strict positivity} now follows from Lemmas \ref{lem:lower bound for the heat propagation} and \ref{lem:estimation of stochastic term}. 

\begin{proof}[\textbf {Proof of Proposition \ref{prop:key prop for strict positivity}}]
   Thanks to Lemma \ref{lem:lower bound for the heat propagation}, for any  $\eta \in (0,1/4)$ and all $x\in \dR$
  \begin{equation*}
    \inf_{s\in[T/(2m),T/m ]}(G(s,\cdot) * \1_{(-\fr,\fr)})(x) \geq 2\eta\1_{(-r -M/m , r+ M/m)}(x).
  \end{equation*} Define 
  $$S: = \{ (t,x) : T/(2m) \leq t \leq T/m , |x|\leq r + M/m\}.$$ 
  Then, we have 
  \begin{equation*}
    \begin{aligned}
      \dP&\Big(u(s,x) <  \eta \1_{(-r-M/m, r+ M/m)}(x) \text{ for some }(s,x) \in [T/(2m),T/m ]\times \dR \Big)\\&\leq \dP\left( \sup_{(s,x)\in S} |\bN(s,x)| \geq \eta\right). 
    \end{aligned}
  \end{equation*} By Lemma \ref{lem:estimation of stochastic term}, we obtain \eqref{eq:key probability estimate for stric positivity}.

\end{proof}

\subsection{Proof of strict positivity: Theorem \ref{thm:strict positivity} (ii)}\label{subsec:proof of positivity}

In this section, we prove Theorem \ref{thm:strict positivity} (ii) (strict positivity of the solution to \eqref{eq:SHE}). 

\begin{proof}[\textbf{Proof of Theorem \ref{thm:strict positivity} (ii)}]

 Let $T>0$ and $M>\fr>0$ be fixed but arbitrary constants. We will prove that 
\begin{equation}\label{eq:probability_of_strict_positivity}
  \dP\left( u(s,x) > 0 \text{ for all }(s,x) \in \left[\frac{T}{2}, T \right] \times\left[ -\frac{M}{2},\frac{M}{2} \right]  \right)=1,
\end{equation} where $u$ is the unique mild solution to \eqref{eq:SPDE for strict positivity} subject to the initial function $u_0 = \1_{[-\fr,\fr]}$ for any fixed $\fr>0$. Since our choices of $T>0$ and $M>\fr>0$ are arbitrary, by the weak comparison principle (Theorem~\ref{thm:weak comparison principle}) with the discussion at the beginning of Section~\ref{sec:strict positivity}, \eqref{eq:probability_of_strict_positivity} is sufficient for the proof of Theorem~\ref{thm:strict positivity} (ii). 

Let $m_0\geq 1 $ be a large integer which will be specified later (see \eqref{eq:lower bound for the probability of w_k} below). For any integer $m\geq m_0$, we define for $k=0,1,...,m-1$ 
\begin{equation*}
  S^m_k : = \left\{ x\in \dR : |x|\leq \fr + \frac{Mk}{m} \right\}.
\end{equation*}  For $k\geq 1$, let  $w_k$ be the solution to the following SPDE:
\begin{equation}\label{eq:SPDE for iteration}
  \begin{aligned}
     \begin{cases}
      \partial_tw_k(t,x) = \frac{1}{2}\partial^2_x w_k(t,x) + \sigma_k(w_k(t,x))\dot{W}_k(t,x), \quad(t,x)\in \dR_+ \times \dR,\\
      w_k(0,x) = \mathds{1}_{S^m_{k-1}}(x), \quad x\in\dR,
     \end{cases}
  \end{aligned}
 \end{equation}  where $\dot{W}_k(s,x) : = \dot{W}(s+ kT/m , x)$ is the time-shifted space-time white noise. Fix $\eta \in(0,1/4\wedge \fd)$. We set
 \begin{equation*}
  \sigma_k(u) : = \eta^{-k}\sigma(\eta^k u) \quad \text{for all }u\in \dR.
 \end{equation*} 
 Let $v_k(t,x)$ be a mild solution to \eqref{eq:SHE} driven by the time-shifted noise $\dot{W}_k$ starting from $v_k(0,x) : = \eta^{k} \1_{S^m_{k-1}}$.

Now let us describe our strategy for proving \eqref{eq:probability_of_strict_positivity}. First note that $u(s+kT/m, x) \geq v_k(s,x)$ for all $s\geq 0 $ and $x\in\dR$, conditioned on $u(kT/m,x)\geq v_k(0,x) =  \eta^k \1_{S^m_{k-1}}(x)$ for all $x\in\dR$ by the weak comparison principle. Thus, we aim to show that $v_k$ is strictly positive with high probability in each time interval of length $T/(2m)$ inductively for all $k=0,1,...,m-1$ (see \eqref{eq:lower bound for the probability of w_k} below) by applying Proposition~\ref{prop:key prop for strict positivity}. However, since the size of the initial condition of $v_k$ (i.e. $\eta^k$) is not fixed as $k$ varies, and the constants in Proposition~\ref{prop:key prop for strict positivity} depend on it, we instead apply the proposition to the solution $w_k$ of \eqref{eq:SPDE for iteration} which has a fixed-size initial condition $\1_{S^m_{k-1}}$.

To this end, we first prove that $v_k = \eta^k w_k$ by uniqueness (Theorem \ref{thm:strict positivity} (i)). Thus, we verify that Assumption \ref{assumption:basic condition for sigma} is satisfied by $\sigma_k$: The monotonicity of $u\mapsto \sigma_k(u)$ and $u\mapsto \sigma_k(u)/u$ follows from that of $\sigma $ with $\fd$ replaced by $\fd_k := \eta^{-k}\fd$. In addition, $\sigma_k(u) \leq L_{\fd_k} u$ with $L_{\fd_k} = L_{\fd}$. Moreover, it is easy to see that the condition \eqref{eq:positivity_condition} also holds for $\sigma_k$ uniformly over all $k$. Therefore, by Theorem \ref{thm:strict positivity} (i), there exists a unique mild solution $w_k$ of \eqref{eq:SPDE for iteration}, and by uniqueness we have that $v_k(t,x) = \eta^k w_k(t,x)$. 

 Next we apply Proposition~\ref{prop:key prop for strict positivity} to $w_k$ for all $k=0,1,...,m-1$. Regarding the assumptions in Proposition \ref{prop:key prop for strict positivity}, we find that replacing $r$ by $\fr+(Mk)/m$ is valid since we have chosen $M>\fr$ to ensure that $r= \fr+(Mk)/m\in[\fr,2M]$ (see Remark~\ref{rmk:uniform_in_r}), and $\eta < \fd \wedge 1\leq \fd_k \wedge 1 $. Then by Proposition \ref{prop:key prop for strict positivity}, there exist constants $C=C(T,M,L_\fd,\eta) \geq 1 $ and $m_0 = m_0(T,M,L_\fd,\eta,\fr)\geq 1 $ such that for all integers $m\geq m_0$ and $k=0,1,...,m-1$ we have 
\begin{equation}\label{eq:lower bound for the probability of w_k}
\begin{aligned}
  \dP&\left(v_k(s,x) \geq \eta^{k+1} \1_{S^m_k}(x) \text{ for all }(s,x) \in \left[ \frac{T}{2m},\frac{T}{m}\right]\times \dR \right) \\
  & = \dP\left(w_k(s,x) \geq \eta \1_{S^m_k}(x) \text{ for all }(s,x) \in \left[ \frac{T}{2m},\frac{T}{m}\right]\times \dR \right)\geq 1 - g(k),
\end{aligned}
 \end{equation} where $g(k)$ is defined as
 \begin{equation*}
 \begin{aligned}
    g(k) &: = \exp \left( -C   m^{1/2}(\log m )^{3/2}\left[\frac{\eta }{\sigma_k(\eta )} \right]^2\right) \\
  &= \exp \left( -C  m^{1/2}(\log m )^{3/2}\left[\frac{\eta^{k+1} }{\sigma(\eta^{k+1} )} \right]^2\right).
 \end{aligned} 
 \end{equation*} By Assumption \ref{assumption:basic condition for sigma} (a), uniformly over all $k=0,1,...,m-1$, we have 
 \begin{equation}
  \fg(m) : = \exp \left( -C  m^{1/2}(\log m )^{3/2}\left[\frac{\eta^m }{\sigma(\eta^m)} \right]^2\right) \geq g(k).
 \end{equation} Furthermore, by \eqref{eq:positivity_condition}, there exists a constant $C=C(\alpha,\eta)>0$ such that 
\begin{equation*}
  \frac{\eta^m}{\sigma(\eta^m)} = \frac{e^{-m\log\eta^{-1} }}{\sigma(e^{-m\log\eta^{-1}})}\geq  Cm^{-\alpha},
 \end{equation*} for all sufficiently large $m\geq 1 $. This leads to an upper bound of $\fg(m)$ as follows: There exist constants $C_1=C_1(T,M,L_{\fd},\eta)>0$ and  $m_1= m_1(T,M,L_{\fd},\eta,\fr,\alpha)>0$ such that for all $m\geq m_1$
 \begin{equation}
  \fg(m) \leq \exp \left(  -C_1  m^{1/2}(\log m )^{3/2} m^{-2\alpha}\right) = \exp (-C_1 m^{\bar{\alpha}} (\log m)^{3/2}),
 \end{equation} where $\bar{\alpha} = \frac{1}{2} - 2\alpha >0$. 
 
 We define the events  
\begin{equation*}
  \begin{aligned}
    &{\bf A}_k : = \left\{u(s,x) \geq \eta^{k+1}  \1_{S^m_k}(x) \text{ for all }(s,x) \in \left[ \frac{(2k+1)T}{2m}, \frac{(k+1)T}{m} \right]\times \dR \right\}, \text{ for $k=0,1,...,m-1$}, \\
    &{\bf B}_k : = \left\{u(s,x) \geq \eta^{k+1}  \1_{S^m_k}(x) \text{ for all }(s,x) \in \left[ \frac{kT}{m} ,\frac{(2k+1)T}{2m}\right]\times \dR \right\}, \text{ for $k= 1,\dots , m-1$}, \\
    &{\bf B}_0 : = \left\{u\left( \frac{T}{2m}, x\right) \geq \eta \1_{S^m_0}(x) \text{ for all }x\in \dR \ \right\}.
  \end{aligned} 
\end{equation*} Note that the decomposition of the events into ${\bf A_k}$ and ${\bf B_k}$ is required since we have to wait for an intermidiate time of length $T/2M$ to apply Proposition~\ref{prop:key prop for strict positivity} iteratively. Then, by \eqref{eq:lower bound for the probability of w_k} with the weak comparison principle (Theorem \ref{thm:weak comparison principle}) and the Markov property of $u$ (see \cite[Section 9]{da2014stochastic}) we have that for all $1\leq k \leq m-1$
\begin{equation*}
  \dP ( {\bf A}_k \mid \cF_{kT/m}) \geq 1 - g(k) \geq 1 - \fg(m) ,\quad \text{a.s. on }{\bf A}_{k-1},
\end{equation*} since $u(s+kT/m,x) \geq v_k(s,x)$ for all $(s,y)\in\dR_+\times \dR$, a.s. on ${\bf A}_{k-1}$. We also have 
\begin{equation*}
  \dP ( {\bf A}_k \mid {\bf A}_{k-1} \cap \cdots \cap {\bf A}_0) \geq 1 - \fg(m).
\end{equation*} Similarly, for $1\leq k \leq m-1$ we have 
\begin{equation*}
  \dP ( {\bf B}_k \mid {\bf B}_{k-1} \cap \cdots \cap {\bf B}_0) \geq 1 - \fg(m).
\end{equation*} and $\dP({\bf B}_0) \geq\dP({\bf A}_0)\geq 1 - \fg(m)  $. Therefore, we get 
 \begin{equation*}
 \begin{aligned}
    \dP \left( \bigcap_{0\leq k \leq m-1 } [{\bf A}_k \cap {\bf B}_k]  \right) &\geq 1 - \left( 1- \dP \left( \bigcap_{0\leq k \leq m-1 } {\bf A}_k \right)\right)-\left( 1- \dP \left( \bigcap_{0\leq k \leq m-1 } {\bf A}_k \right)\right)\\
  &\geq (1-\fg(m))^{m-1} \dP ( {\bf A}_0) + (1-\fg(m))^{m-1} \dP({\bf B}_0) -1 \\
  &\geq 2( 1- \fg(m))^m -1.
 \end{aligned}
 \end{equation*} Let us define 
\begin{equation*}
  {\bf A} : = {\bf A}_{T,M}:=  \left\{ u(s,x) > 0 \text{ for all }(s,x) \in \left[\frac{T}{2}, T \right] \times\left[ -\frac{M}{2},\frac{M}{2} \right]  \right\}. 
 \end{equation*} Now it holds that 
 \begin{equation*}
  \begin{aligned}
    \dP({\bf A})    &\geq \lim_{m\to \infty}\dP \left( \bigcap_{0\leq k \leq m-1 } [{\bf A}_k \cap {\bf B}_k]  \right)\geq \lim_{m\to\infty} 2( 1- \fg(m))^m -1.
  \end{aligned}
 \end{equation*} The proof of \eqref{eq:probability_of_strict_positivity} is completed if we can show 
\begin{equation*}%\label{eq:last claim for strict positivity}
  \lim_{m\to \infty}( 1- \fg(m))^m =1.
\end{equation*} This follows immediately from the definition of $\fg(m)$ and L'Hopital's rule.
\end{proof}

\section{Compact support property}
\label{sec:proof of compact support property}

The goal of this section is to prove Theorem \ref{thm:cpt_support}, demonstrating the compact support property of the solution to \eqref{eq:SHE} under the assumption \eqref{eq:csp_condition}. The main ingredient for the proof is Lemma \ref{lem:key estimate for CSP}, which states  a stochastic version of the maximum principle. We first provide the proof of Theorem \ref{thm:cpt_support} using Lemma \ref{lem:key estimate for CSP} and then prove Lemma \ref{lem:key estimate for CSP}. Throughout this section, $u$ denotes a nonnegative weak solution of \eqref{eq:SHE} under the assumptions in Theorem \ref{thm:cpt_support}.

\subsection{Proof of compact support property: Theorem \ref{thm:cpt_support}}

For $a >0$, define 
\begin{equation}\label{eq:definition of Psi_a}
  \Psi_a(x) : = \frac{1}{\cosh(ax)}.
\end{equation} The follwing lemma is useful to control the $L^1([0,\tau])$-norm of the solution. 
\begin{lemma}[Lemma 3.1 of \cite{han2023compact}]\label{lem:limit of the time L1 norm} Let $T>0$ and let $\tau\leq T$ be a bounded stopping time. Let $u\in C([0,\tau];C_{tem}(\dR))$ be a nonnegative weak solution to \eqref{eq:SHE} satisfying the assumptions in Theorem~\ref{thm:cpt_support}. Suppose that there exist constants $a,H>0$ satisfying
\begin{equation*}
\sup_{t\leq\tau}\sup_{x\in \dR} \Psi_a(x)u(t,x)\leq H \quad \text{almost surely},
\end{equation*}
where $\Psi_a$ is defined  in \eqref{eq:definition of Psi_a}. Then,  we have
\begin{equation}
\label{eq:limit of the time L1 norm}
\limsup_{R\to\infty}\dE\left[\int_0^\tau  u(t,R)   dt\right]= 0.
\end{equation}
\end{lemma}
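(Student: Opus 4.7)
The plan is to bound $\dE[\int_0^\tau u(t,R)\,dt]$ by a heat-kernel convolution $\int_0^T (G_r * u_0)(R)\,dr$ via a duality argument, and then use the (implicit) assumption that $u_0$ has compact support to conclude that this bound vanishes as $R \to \infty$. For $\epsilon > 0$, the right test function is
\begin{equation*}
\phi_\epsilon(s,x) := \int_s^T G(r-s+\epsilon, x-R)\,dr, \quad (s,x)\in[0,T]\times\dR,
\end{equation*}
which is smooth, nonnegative, uniformly bounded in $(s,x)$, has Gaussian decay in $x$ centered at $R$, and satisfies the backward-heat identity $\partial_s \phi_\epsilon + \frac{1}{2}\partial_x^2 \phi_\epsilon = -G(\epsilon,\cdot - R)$ together with $\phi_\epsilon(T,\cdot) = 0$.

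First I would extend the weak form \eqref{eq:sol_int_eq_form}, which is stated for time-independent $\phi \in \cS$, to the time-dependent function $\phi_\epsilon$. This is done by approximating $\phi_\epsilon$ by a sequence of Schwartz functions in $(s,x)$ (spatial truncation plus a Riemann-sum decomposition in time) and passing to the limit, using the a.s. bound $u(t,x) \leq H\cosh(ax)$ together with the Gaussian decay of $\phi_\epsilon$ to control all terms. This yields the identity
\begin{equation*}
(u(\tau), \phi_\epsilon(\tau,\cdot))_{L^2(\dR)} = (u_0, \phi_\epsilon(0,\cdot))_{L^2(\dR)} - \int_0^\tau (u(s,\cdot), G(\epsilon,\cdot-R))_{L^2(\dR)}\,ds + M^\epsilon_\tau,
\end{equation*}
where $M^\epsilon$ is the stochastic integral of $\sigma(u)\phi_\epsilon$ against $W$. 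By Assumption \ref{assumption:basic condition for sigma} we have $|\sigma(u)| \leq C(1+|u|) \leq C(1+H\cosh(ax))$, and combined with the Gaussian tails of $\phi_\epsilon$, one checks that $\dE\langle M^\epsilon\rangle_\tau < \infty$. Hence $M^\epsilon$ is a true martingale on $[0,T]$, and $\dE[M^\epsilon_\tau] = 0$ by optional stopping.

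Taking expectations and using $u \geq 0$, $\phi_\epsilon(\tau,\cdot) \geq 0$ to drop the nonnegative LHS term,
\begin{equation*}
\dE\left[\int_0^\tau (u(s,\cdot), G(\epsilon,\cdot-R))_{L^2(\dR)}\,ds\right] \leq (u_0, \phi_\epsilon(0,\cdot))_{L^2(\dR)} = \int_0^T (G_{r+\epsilon} * u_0)(R)\,dr.
\end{equation*}
Letting $\epsilon \to 0$, the continuity $u \in C([0,\tau]; C_{tem}(\dR))$ ensures that $(u(s,\cdot), G(\epsilon,\cdot-R))_{L^2(\dR)} \to u(s,R)$ pointwise, and the bound $u \leq H\cosh(ax)$ gives a dominating integrable function (since $G(\epsilon,\cdot-R)$ is an approximate identity and $\cosh(a\cdot)$ is locally bounded near $R$). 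Thus
\begin{equation*}
\dE\left[\int_0^\tau u(s,R)\,ds\right] \leq \int_0^T (G_r * u_0)(R)\,dr.
\end{equation*}
Since $u_0$ is compactly supported in some $[-K,K]$, the RHS is dominated by $C \int_0^T r^{-1/2} e^{-(R-K)^2/(2r)}\,dr$, which vanishes as $R\to\infty$ by dominated convergence in $r$, yielding \eqref{eq:limit of the time L1 norm}.

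The main technical obstacle is the first step: the weak form \eqref{eq:sol_int_eq_form} is only stated for time-independent Schwartz test functions, whereas $\phi_\epsilon$ is time-dependent and only has Gaussian (not Schwartz) spatial decay. The extension requires careful approximation, but the a.s. bound $\Psi_a u \leq H$ together with the at-most-linear growth of $\sigma$ (Assumption \ref{assumption:basic condition for sigma}) gives enough integrability to push all the convergences through. Verifying the martingale property of $M^\epsilon$ and the dominated convergence as $\epsilon \to 0$ are the other points where the a.s. bound is essential; everything else is routine heat-kernel estimation.
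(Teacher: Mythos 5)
The paper does not prove this lemma; it is cited verbatim from \cite{han2023compact}, so there is no in-paper proof to compare against. Your self-contained argument is correct and is the natural way to establish the result directly from the weak formulation. The key move --- introducing the duality test function $\phi_\epsilon(s,x)=\int_s^T G(r-s+\epsilon,x-R)\,dr$, which solves the backward heat equation with source $-G(\epsilon,\cdot-R)$ and terminal value zero, and thereby reducing $\dE\bigl[\int_0^\tau u(s,R)\,ds\bigr]$ to $\int_0^T (G_r*u_0)(R)\,dr$ --- is exactly the right heat-kernel duality for this statement, and the conclusion then follows from compact support of $u_0$. Your integrability checks also go through: since $u\leq H\cosh(a\cdot)$ a.s.\ and $|\sigma(u)|\leq L(1+|u|)$ by Assumption~\ref{assumption:basic condition for sigma}, while $\phi_\epsilon(s,\cdot)$ has Gaussian decay in $x$, the bracket $\langle M^\epsilon\rangle_\tau$ is bounded by a deterministic constant, so $M^\epsilon$ is a square-integrable martingale and optional stopping gives $\dE[M^\epsilon_\tau]=0$; the two dominated-convergence passages as $\epsilon\to0$ are controlled by $(u(s,\cdot),G(\epsilon,\cdot-R))\leq H\cosh(aR)e^{a^2\epsilon/2}$ on the left and $(G_{r+\epsilon}*u_0)(R)\leq\|u_0\|_{L^\infty}$ on the right. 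You are also right to flag that the hypothesis alone cannot imply the conclusion --- with $\sigma\equiv0$ and $u_0\equiv1$ the bound $\Psi_a u\leq1$ holds but the conclusion fails --- so compact support of $u_0$ (available from Theorem~\ref{thm:cpt_support}, where the lemma is invoked) must be used.

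One simplification: the spatial truncation you propose in the approximation step is unnecessary, since $\phi_\epsilon(s,\cdot)$ is already a Schwartz function of $x$ for each fixed $s\in[0,T]$ (the heat kernel and all its spatial derivatives have Gaussian decay). Only the piecewise-constant-in-time discretization is needed to upgrade \eqref{eq:sol_int_eq_form} to the time-dependent form, and the bound $u\leq H\cosh(a\cdot)$ provides the domination required to pass to the mesh-zero limit.
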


We now introduce the key lemma (Lemma \ref{lem:key estimate for CSP} below) which will be used in the proof of Theorem \ref{thm:cpt_support}. The lemma will be proven in the next section. We denote by $I_R$ an interval $[-R,R]$ for $R>0$.

\begin{lemma}
\label{lem:key estimate for CSP} Let $T>0$ and let $\tau\leq T$ be a bounded stopping time. Let $u\in C([0,\tau];C_{tem}(\dR))$ be a nonnegative weak solution to \eqref{eq:SHE} satisfying the assumptions in Theorem~\ref{thm:cpt_support}. Suppose that there exist constants $a, H>0$ such that 
\begin{equation}
\label{eq:bound for the Holder coefficient of u}
\| \Psi_a u\|_{C^\gamma([0,\tau]\times \dR)}\leq H \quad \text{almost surely},
\end{equation} 
where $\Psi_a$ is defined in \eqref{eq:definition of Psi_a}. Furthermore, assume that $\supp(u_0)\subseteq I_{R_0}$ for some $R_0>0$.
Then, the following hold:

\begin{itemize}
\item[(i)] \label{item:key estimate for CSP_1} For every  $R>R_0 \vee 1 $,
\begin{equation*}
\dP \left(  \int_0^\tau    u(s,R)  ds = 0 \right) 
=\dP \Big( u(s,y) =0 \quad\text{for all } s\in[0,\tau] \text{ and }  y>R \Big). 
\end{equation*} 

\item[(ii)] \label{item:key estimate for CSP_2} Let $\lambda= \gamma/(1+\gamma) \in (0,1)$. For every $R>R_0 \vee 1 $, $p,q\in(0,\fd^{1/\lambda} \wedge 1 )$ , $r>0$, and $\beta\in (0,1)$, there exists  $ x \in (r,2r)$  such that 
\begin{equation}
\label{eq:ito inequality for CSP}
\begin{aligned}
\dP& \left( \int_0^{\tau} \sigma(u(s,R+x)) ds \geq p \right) 
\leq \dP\left( \int_0^{\tau}   \sigma(u(s,R))  ds \geq q \right) + C   r^{-\frac{3\beta}{2}} \left(\frac{q^{1+\lambda} }{p\sigma(q^\lambda)}\right)^{\beta },
\end{aligned}
\end{equation}
where $C=C(\beta,\gamma, H,T)>0$. 
%The above inequality also holds for $-R-x$ and $-R$ instead of $R+x$ and $R$ respectively.
\end{itemize}

\end{lemma}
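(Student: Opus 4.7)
The plan is to treat the two parts of Lemma \ref{lem:key estimate for CSP} separately. For part (i), the inclusion $\{u(\cdot,y)\equiv 0\text{ for all }y>R\}\subseteq\{\int_0^\tau u(s,R)\,ds=0\}$ is immediate by continuity of $u$, so the real work is the reverse, up to a null set. On the event $A:=\{\int_0^\tau u(s,R)\,ds=0\}$, joint continuity and nonnegativity of $u$ force $u(s,R)=0$ for every $s\in[0,\tau]$, and combined with $u_0\equiv 0$ on $(R,\infty)$ (since $R>R_0$) this gives vanishing initial and ``boundary'' data on $[R,\infty)$. I would test the weak formulation \eqref{eq:sol_int_eq_form} against $\cS$-approximations of the piecewise-linear tent $\phi_K(y):=\min((y-R)_+,K)$, whose distributional second derivative is $\delta_R-\delta_{R+K}$. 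Passing to the limit---with localization of the stochastic integral justified by \eqref{eq:bound for the Holder coefficient of u} and Lemma \ref{lem:limit of the time L1 norm}---yields
\begin{equation*}
(u(t\wedge\tau),\phi_K)_{L^2(\dR)}
= \tfrac12\int_0^{t\wedge\tau}\bigl(u(s,R)-u(s,R+K)\bigr)\,ds + M_K(t\wedge\tau)
\end{equation*}
with $M_K$ a martingale. On $A$ the drift reduces to $-\tfrac12\int_0^\cdot u(s,R+K)\,ds\le 0$ while the left-hand side is nonnegative; taking expectations against $\mathbf 1_A$ then forces $\dE[\mathbf 1_A\int_0^\tau u(s,R+K)\,ds]=0$. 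Arbitrariness of $K>0$ combined with continuity places $A$ inside $B=\{u(s,y)=0\text{ for all }s\in[0,\tau],\,y>R\}$, and the probability inequality follows.

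For part (ii) I would first localize with the stopping time
\[
\rho:=\inf\Bigl\{t\ge 0:\,\int_0^t\sigma(u(s,R))\,ds\ge q\Bigr\}\wedge\tau,
\]
so that $\{\int_0^\tau\sigma(u(s,R))\,ds<q\}=\{\rho=\tau\}$. Setting $X_x(t):=\int_0^t\sigma(u(s,R+x))\,ds$, on this event we have $X_x(\tau)=X_x(\rho)$, so
\[
\dP\Bigl(\int_0^\tau\sigma(u(s,R+x))\,ds\ge p\Bigr)
\le \dP\Bigl(\int_0^\tau\sigma(u(s,R))\,ds\ge q\Bigr)+\dP\bigl(X_x(\rho)\ge p\bigr),
\]
and Markov's inequality of order $\beta\in(0,1)$ bounds the last term by $p^{-\beta}\dE[X_x(\rho)^\beta]$. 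To produce the required $x\in(r,2r)$, I would integrate over $x$ and apply Jensen's inequality twice (for $\dE$ and for the $dx$-average of $(\cdot)^\beta$):
\[
\int_r^{2r}\dE[X_x(\rho)^\beta]\,dx
\le r^{1-\beta}\Bigl(\dE\int_0^\rho\int_r^{2r}\sigma(u(s,R+x))\,dx\,ds\Bigr)^\beta,
\]
from which averaging selects some $x\in(r,2r)$ with $\dE[X_x(\rho)^\beta]\le r^{-\beta}\bar B^\beta$, where $\bar B$ is the inner expectation.

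The core computation---and the novelty beyond Krylov \cite{krylov1997result}---is the estimate on $\bar B$ for $\sigma$ outside the pure-power regime. Jensen's inequality against $\sigma$ is unavailable, and the replacement is the time-partition strategy advertised in the Introduction. Split $[0,\rho]$ into $T_{\rm lg}:=\{s\le\rho:u(s,R)>q^\lambda\}$ and $T_{\rm sm}:=[0,\rho]\setminus T_{\rm lg}$. On $T_{\rm lg}$, monotonicity of $\sigma$ gives $\sigma(u(s,R))\ge\sigma(q^\lambda)$, so the definition of $\rho$ forces $|T_{\rm lg}|\le q/\sigma(q^\lambda)$. On $T_{\rm sm}$, the Hölder hypothesis \eqref{eq:bound for the Holder coefficient of u}---after undoing the $\Psi_a$ weight at the cost of a factor depending on $R$ and $r$---gives $u(s,R+x)\le q^\lambda+H'x^\gamma$, and subadditivity of $\sigma$ on $[0,\fd]$ (a consequence of $v\mapsto\sigma(v)/v$ being decreasing, Assumption \ref{assumption:basic condition for sigma}(a)) then yields $\sigma(u(s,R+x))\le\sigma(q^\lambda)+\sigma(H'x^\gamma)$. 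The exponent $\lambda=\gamma/(1+\gamma)$ is precisely the value equating the $r^\gamma$-sized Hölder correction to the threshold $q^\lambda$ at the spatial scale $x\sim r$; tracking the resulting $r$-dependence through the $x$-average and the two-regime decomposition should reproduce the factor $r^{-3\beta/2}(q^{1+\lambda}/(p\sigma(q^\lambda)))^\beta$ in \eqref{eq:ito inequality for CSP}.

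The hard part will be the final bookkeeping: extracting clean powers of $r$, $q$, and $p$ from an estimate that depends on $\sigma$ only through the structural constraints of Assumption \ref{assumption:basic condition for sigma} and \eqref{eq:csp_condition}, while simultaneously respecting the monotonicity of $\sigma(v)/v$ at the threshold $q^\lambda$ and the linear-growth bound beyond $\fd$. This is precisely where the two-regime partition substitutes for the Jensen-against-$u^\gamma$ manipulation used by Krylov \cite{krylov1997result}, and getting the $r^{-3\beta/2}$ power (rather than the weaker $r^{-\beta}$ that averaging alone yields) is the main technical hurdle.
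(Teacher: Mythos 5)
Your plan diverges from the paper's in a structural way: the paper's proof of \emph{both} parts (i) and (ii) passes through Lemma~\ref{prop:estimation of quadratic variation}, the Krylov-style maximum-principle estimate
\begin{equation*}
\dE\Bigl[\Bigl( \int_0^\tau\int_R^\infty (x-R)^2\sigma^2(u(s,x)) \,dx\,ds \Bigr)^{\beta/2} \Bigr]
\leq C\,\dE\Bigl[\Bigl(\int_0^\tau u(s,R)\, ds\Bigr)^\beta\Bigr],
\end{equation*}
obtained by testing the weak form with $(x-R)_+\psi_n$, optional stopping, and the Lenglart-type domination \cite[Thm.\ III.6.8]{diffusion}. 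You bypass this lemma entirely, and that is where your argument breaks.

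For part (i), your tent-function computation correctly produces $(u(t\wedge\tau),\phi_K)=\tfrac12\int_0^{t\wedge\tau}\bigl(u(s,R)-u(s,R+K)\bigr)ds+M_K(t\wedge\tau)$, and on $A=\{\int_0^\tau u(s,R)\,ds=0\}$ the drift is nonpositive. But the step ``taking expectations against $\mathbf 1_A$ forces $\dE[\mathbf 1_A\int_0^\tau u(s,R+K)\,ds]=0$'' does not follow: $A$ is $\cF_\tau$-measurable, not $\cF_0$-measurable, so $\dE[\mathbf 1_A M_K(\tau)]$ is not zero. You would need to first control the quadratic variation of $M_K$ on $A$, which is precisely what Lemma~\ref{prop:estimation of quadratic variation} supplies. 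The paper instead uses a stopping time $\kappa$ (entry of $\int_0^\cdot u(s,R)\,ds$ to level $q$), Chebyshev, and the quadratic-variation bound, then sends $q\downarrow 0$ and $p\downarrow 0$ to get the probability inequality---no pathwise conditioning on $A$ is needed.

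For part (ii) the gap is more serious. Your plan bounds $\bar B=\dE\int_0^\rho\int_r^{2r}\sigma(u(s,R+x))\,dx\,ds$ directly by spatial H\"older continuity: on $T_{\rm sm}$ you get $u(s,R+x)\le q^\lambda+H'x^\gamma$ and hence $\sigma(u(s,R+x))\le\sigma(q^\lambda)+\sigma(H'x^\gamma)$ by subadditivity. But on $T_{\rm lg}$, where $u(s,R)>q^\lambda$, you control only $|T_{\rm lg}|\le q/\sigma(q^\lambda)$ and have no pointwise upper bound on $\sigma(u(s,R+x))$: the hypothesis controls $\Psi_a u$, so the only available pointwise bound is $u(s,R+x)\le H\cosh(a(R+x))$, introducing an $R$-dependent factor incompatible with $C=C(\beta,\gamma,H,T)$. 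The paper avoids this entirely: it never needs to bound $\sigma(u(\cdot,R+x))$ pointwise. Instead it (a) converts the spatial $dx$-average into the \emph{weighted} quadratic variation using $(x-R)\ge r$ (giving one $r^{-\beta}$) plus Cauchy--Schwarz in time and Jensen in $x$ (giving another $r^{-\beta/2}$, hence $r^{-3\beta/2}$), (b) applies Lemma~\ref{prop:estimation of quadratic variation} to reduce to $\dE[(\int_0^\kappa u(s,R)\,ds)^\beta]$, and (c) bounds $\int_0^\kappa u(s,R)\,ds$ pathwise by $Cq^{1+\lambda}/\sigma(q^\lambda)$ using \emph{temporal} H\"older continuity of $u(\cdot,R)$ at the single point $R$ together with the excursion decomposition $\{u(\cdot,R)>q^\lambda\}=\dot\cup(s_q^j,t_q^j)$ and the identity $1-\lambda/\gamma>\lambda$. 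Your two-regime idea does appear in the paper, but only in step (c), applied at the boundary point $R$---not to the interior values $u(s,R+x)$. As you yourself flag, the $r^{-3\beta/2}$ is not produced by averaging alone; it comes from the $(x-R)^2$ weight in the quadratic-variation estimate, which your route never sees.
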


\begin{remark}\label{remark:negative part of key estimate for CSP}
  One can easily show that the results of Lemma \ref{lem:key estimate for CSP} also hold for the negative real line with appropriate modifications. Specifically, the following inequalities hold:
  \begin{equation*}
    \dP \left(  \int_0^\tau    u(s,-R)  ds = 0 \right) 
\leq\dP \Big( u(s,y) =0 \quad\text{for all } s\in[0,\tau] \text{ and }  y<-R \Big),
  \end{equation*} and 
  \begin{equation*}
    \dP \left( \int_0^{\tau} \sigma(u(s,-R-x)) ds \geq p \right) 
\leq \dP\left( \int_0^{\tau}   \sigma(u(s,-R))  ds \geq q \right) + C  r^{-\frac{3\beta}{2}} \left(\frac{q^{1+\lambda} }{p\sigma(q^\lambda)}\right)^{\beta }.
  \end{equation*} respectively.
\end{remark}

We now present the proof of Theorem \ref{thm:cpt_support}.

\begin{proof}[\textbf {Proof of Theorem \ref{thm:cpt_support}}] 
The proof of Theorem \ref{thm:cpt_support} (i) is precisely the same as that of \cite[Theorem 2.9]{han2023compact}. Indeed, Assumption \ref{assumption:basic condition for sigma} implies that we have a universal constant $L>0$ such that $|\sigma(u)| \leq L(1+|u|)$. Since $\sigma$ is continuous, we can follow the same line in \cite[Appendix]{han2023compact}. 

In the remainder of the proof, we prove Theorem \ref{thm:cpt_support} (ii). Fix any $a>0$ and define a stopping time 
\begin{equation*}
\tau_n := n \,\wedge\, \tau \wedge \, \inf\left\{ t\geq0: \|\Psi_a u\|_{C^{\gamma}([0,t]\times\dR)} \geq n \right\}.
\end{equation*} By Theorem \ref{thm:cpt_support} (i), there exists $n_0=n_0(\omega)>0$ almost surely such that $\tau_n = \tau$ for all $n\geq n_0$. Henceforth, throughout the proof, we can assume that there exists a constant $H>0$  satisfying
$$ \tau \leq H \quad\text{and} \quad  \|\Psi_a u\|_{C^\gamma([0,\tau]\times\dR)} \leq H,
$$  and it is sufficient for the proof of Theorem \ref{thm:cpt_support}. For $R > 0$, we set 
\begin{equation*}
\Omega_R := \{ \omega\in\Omega : u(\omega , t,x) = 0\quad\text{for all }t\leq \tau,\, x > R \}.
\end{equation*}
In order to prove the theorem, we aim to show that $\dP(\cup_R \Omega_R) = 1$ by employing Lemma \ref{lem:key estimate for CSP}. Once we prove that $\dP(\cup_R \Omega_R) = 1$, we can use the same argument along with Lemma \ref{lem:key estimate for CSP} and Remark \ref{remark:negative part of key estimate for CSP} to show that
\begin{equation*}
\dP \Big( \text{there exists $R>0$ such that }u(t,x) =0\text{ for all }t\leq \tau \text{ and } x< -R \Big)=1.
\end{equation*}
Moreover, since $\Omega_{R_0}\subset \Omega_{R_1}$ for $R_0 \leq R_1$, it suffices to prove that
\begin{equation*}
\lim_{R\to\infty}\dP(\Omega_R) = 1.
\end{equation*} By Lemma \ref{lem:key estimate for CSP} (i), we have
\begin{equation*}
\dP\left(   \int_0^{\tau}   u(s,R)    ds = 0 \right) \leq \dP(\Omega_R).
\end{equation*} From Assumption \ref{assumption:basic condition for sigma}, we observe that 
\begin{equation*}
  \dP\left( \int_0^{\tau} \sigma(u(s,R))   ds = 0 \right)  \leq \dP\left(  u(s,R) = 0 \,\,\text{for all}\,\, s\leq \tau \right) \leq \dP\left( \int_0^{\tau} u(s,R)   ds = 0 \right). 
\end{equation*} Therefore, the proof is completed once we prove   
\begin{equation}\label{eq: claim in lemma}
\limsup_{R\to\infty} \dP\left( \int_0^{\tau} \sigma(u(s,R))   ds >0 \right) = 0.
\end{equation}

 Let $m\in \dZ_+$ be a sufficiently large positive integer such that $e^{-m} < \fd\wedge 1 $. Set 
\begin{equation*}
  r_k := \frac{1}{2(k+1)^{1+c/3}} \quad \text{ and }\quad p_k : = e^{-\frac{k+m}{\bar{\gamma}}} \quad \text{for all } k\in \dZ_+,
\end{equation*} where $c>0$. Here the choice of $c$ will be specified later.  Let us fix $\beta \in (0,1)$ and $x_0 := R_0 +1$ where $R_0>0$ satisfies $\supp(u_0) \subseteq I_{R_0}$. By Lemma \ref{lem:key estimate for CSP} (ii), there exists $x_1\in (x_0+r_0,x_0+2r_0)$ such that 
\begin{equation*}
\begin{aligned}
&\dP\left( \int_0^{\tau}  \sigma(u(s,x_1))   ds \geq p_1 \right) 
\leq \dP\left(  \int_0^{\tau}  \sigma(u(s,x_0))    ds \geq p_{0} \right) + C r_0^{-\frac{3\beta}{2}}\left(\frac{p_0^{1+\lambda}}{p_{1}\sigma(p_0^\lambda)}\right)^{\beta},
\end{aligned} 
\end{equation*}
where $C = C(\beta,\lambda, \gamma,H)>0$. Now, we proceed the iteration as follows. For $k\geq 2 $, take $x_{k-1}$ as a new origin and apply Lemma \ref{lem:key estimate for CSP} (ii) to find $x_k\in (x_{k-1}+r_{k-1},x_{k-1}+2r_{k-1})$ satisfying
\begin{equation}
    \label{iteration}
\begin{aligned}
&\dP\left( \int_0^{\tau}  \sigma(u(s,x_{k})) ds \geq p_k \right) 
\leq \dP\left(  \int_0^{\tau}  \sigma(u(s,x_{k-1}))  ds \geq p_{k-1} \right) + Cr_{k-1}^{-\frac{3\beta}{2}}\left(\frac{p_{k-1}^{1+\lambda}}{p_{k}\sigma(p_{k-1}^\lambda)}\right)^{\beta}.
\end{aligned}
\end{equation} Since $|x_k-x_{k-1}|\leq 2r_{k-1} = k^{-1-c/3}$ for all $k\geq 1 $, there exists $y$ such that $x_k \rightarrow y$ as $k\rightarrow \infty$. From \eqref{iteration}, by summing over $k=1,2,\dots$, we have  
\begin{equation}\label{eq: sum of iteration}
\begin{aligned}
\dP\left( \int_0^{\tau} \sigma(u(s,y)) ds > 0 \right) \leq \dP\left(  \int_0^{\tau} \sigma(u(s,x_0)) ds \geq p_0 \right) + \fR(m),
\end{aligned}
\end{equation}
where
\begin{equation*}
\begin{aligned}
\fR(m)
&:= C\sum_{k=1}^\infty \left( r_{k-1}^{-\frac{3}{2}} \frac{p_{k-1}^{1+\lambda}}{p_{k}\sigma(p_{k-1}^\lambda)}\right)^{\beta}.
\end{aligned}
\end{equation*} 

Now note that \eqref{eq:csp_condition} implies that there exist constants $\alpha>\frac{5}{2}$ and $k_0(\alpha)\geq 1 $ such that for all $k\geq k_0$ 
\begin{equation}\label{eq:decay of nonlinearity for CSP}
  \frac{e^{-k}}{\sigma(e^{-k})} \leq k^{-\alpha}.
 \end{equation} 
 On the other hand, observe that 
 \begin{equation*}
  \frac{p_{k-1}}{p_{k}} = e^{1/\lambda}
 \end{equation*} for all $k\geq 1$. Therefore, we can choose $c=c(\alpha)>0$ such that for all $k\geq k_0$
\begin{equation*}
  \frac{p_{k-1}^{1+\lambda}}{p_{k}\sigma(p_{k-1}^\lambda)} \leq \frac{e^{1/\lambda}}{( k-1+  m )^{5/2+c}},
\end{equation*} where we applied \eqref{eq:decay of nonlinearity for CSP}. % to get 
%\begin{equation*}
%   \frac{p_{k-1}^\lambda}{\sigma(p_{k-1}^\lambda) } \leq \frac{1}{( k-1+  m )^{5/2+c}}.
% \end{equation*} 
Note that $k_0$ can be chosen independently of $m\geq 1 $. Thus, we have an upper bound of $\fR(m)$:
\begin{equation*}
  \fR(m) \leq C\sum_{k=1}^\infty \left(\frac{e^{1/\lambda}k^{(3+c)/2}}{(k-1+ m)^{5/2+c}}\right)^\beta.
\end{equation*} In addition, for any $c>0$ and $\beta\in(0,1)$, we have  
\begin{equation*}
  \left( \left( \frac{5}{2} +c \right) - \frac{3+c}{2} \right)^\beta >1.
\end{equation*} Now it follows that for all sufficiently large $m$, we have $\fR(m)<\infty$.  Moreover, it is easy to see that  $\lim_{m\to \infty} \fR(m) =0$ by the dominated convergence theorem. We point out that \eqref{eq: sum of iteration} also holds for any choice of $x_0 = R > R_0 +1$ with the same constants $C>0$ and $y>0$ which are independent of $R$. In other words, for any $R>R_0 +1$, we have
\begin{equation*}
\dP\left( \int_0^\tau \sigma(u(s,R+y))ds > 0  \right) 
\leq \dP\left( \int_0^\tau \sigma(u(s,R))ds  >  e^{-m/\lambda} \right) + \fR(m),
\end{equation*} since $p_0 = e^{-m/\lambda}$. We now send $R\to \infty$ to get that 
\begin{equation}
\label{eq:last ineq}
\limsup_{R\to\infty}\dP\left( \int_0^\tau \sigma(u(s,R))ds > 0  \right) 
\leq \limsup_{R\to\infty}\dP\left( \int_0^\tau \sigma(u(s,R))ds  > e^{-m/\lambda} \right) + \fR(m).
\end{equation} Regarding the first term on the right-hand side of \eqref{eq:last ineq}, by combining Lemma \ref{lem:property of sigma} and Lemma \ref{lem:limit of the time L1 norm}, we have for any $\delta \in (0, \infty)$,
\begin{equation*}
\begin{aligned}
    \limsup_{R\to\infty}\dE \left[  \int_0^\tau \sigma(u(s,R)) ds \right] &\leq \limsup_{R\to\infty}\dE \left[  \int_0^\tau \left[ \left( L_\fd \vee \delta^{-1} \sigma(\delta) \right)u(s,R) + \sigma(\delta) \right]ds \right]\\
    &\leq  \dE\left[ \int_0^\tau \sigma(\delta )ds \right].
\end{aligned}
\end{equation*} Letting $\delta \to 0$ in the last line and applying Chebyshev's inequality yields
\begin{equation*}
     \limsup_{R\to\infty}\dP\left( \int_0^\tau \sigma(u(s,R))ds  >e^{-m/\lambda} \right)  =0.
      \end{equation*} Thus, by taking $m\to \infty$ in \eqref{eq:last ineq} and recalling that $\lim_{m\to \infty} \fR(m) = 0$, we get 
\begin{equation*}
  \limsup_{R\to\infty}\dP\left( \int_0^\tau \sigma(u(s,R))ds  >  0  \right) =0,
\end{equation*} which completes the proof.

\end{proof}

%%%%%%%%%%%%%%%%%%%%%%%%%%%%%%%%%%%%%%%%%%%%%%%%%%%%%%%%%%%%%%%%%%%%%%%%%%%%%%%%%%%%%%%%%%%%%%%%%%%%%%%%%%%%%%%%%%%%%%%%

\subsection{Proof of Lemma \ref{lem:key estimate for CSP}}\label{subsec:proof of props}

In this section, we prove Lemma \ref{lem:key estimate for CSP}, which is a key ingredient in the proof of Theorem \ref{thm:cpt_support}. The following lemma presents the estimation of the quadratic variation outside the initial support in the weak formulation \eqref{eq:sol_int_eq_form}. This estimation is obtained using a suitable test function and is controlled by the time integral of the solution $u$ at the boundary. The result of this lemma will be used later in the proof of Lemma \ref{lem:key estimate for CSP}.

\begin{lemma}
\label{prop:estimation of quadratic variation} Let $T>0$ and let $\tau\leq T$ be a bounded stopping time. Let $u\in C([0,\tau];C_{tem}(\dR))$ be a nonnegative weak solution to \eqref{eq:SHE} satisfying the assumptions in Theorem~\ref{thm:cpt_support}. Let $\eta\leq \tau$ be a bounded stopping time and let $R>R_0 \vee 1 $ where $R_0>0$ satisfies $\supp(u_0)\subseteq I_{R_0}$. Then, for every  $\beta\in(0,1)$, there exists $C= C(\beta) >0$ such that 
\begin{equation}
\label{eq:estimation of quadratic variation}
\begin{aligned}
&\dE\left[\left( \int_0^\eta\int_R^\infty (x-R)^2|\sigma(u(s,x))|^2 dxds \right)^{\beta/2} \right]
\leq C\dE\left[\left(\int_0^\eta u(s,R) ds\right)^\beta\right].\\
\end{aligned}
\end{equation}
\end{lemma}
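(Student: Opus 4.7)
The plan is to test the weak formulation \eqref{eq:sol_int_eq_form} against a function approximating $\phi(x) = (x-R)^+$. Formally, $\phi''(x) = \delta_R$ as a distribution and $\phi(x)^2 = (x-R)^2\1_{x\geq R}$; since $\supp u_0 \subseteq I_{R_0} \subset (-R,R)$ kills the initial contribution, this should yield the identity
\begin{equation*}
\int_\dR u(t,x)(x-R)^+ dx = \tfrac{1}{2}\int_0^t u(s,R)\,ds + M_t,
\end{equation*}
where $M$ is a continuous martingale with quadratic variation $\langle M\rangle_t = \int_0^t \int_R^\infty (x-R)^2 \sigma(u(s,x))^2 dxds$. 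The Burkholder-Davis-Gundy (BDG) inequality at exponent $\beta \in (0,1)$ would then deliver \eqref{eq:estimation of quadratic variation} once $\sup_{t\leq\tau} \int u(t,x)(x-R)^+ dx$ is suitably controlled.

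Since $(x-R)^+ \notin \cS(\dR)$, I would carry out the argument with the compactly supported approximation $\phi_N(x) := (x-R)^+ \eta((x-R)/N)$, where $\eta \in C^\infty(\dR)$ is a smooth cutoff equal to $1$ on $(-\infty, 1]$ and $0$ on $[2, \infty)$. A direct computation gives $\phi_N'' = \delta_R + \psi_N$ distributionally, with $\psi_N$ smooth, supported in $[R+N, R+2N]$, and $\|\psi_N\|_\infty \leq C/N$. Plugging $\phi_N$ into \eqref{eq:sol_int_eq_form} yields
\begin{equation*}
M^N_t := \int_0^t\!\!\int_\dR \sigma(u(s,x))\phi_N(x)\,W(dsdx) = (u(t,\cdot),\phi_N) - \tfrac{1}{2}\!\int_0^t u(s,R)\,ds - \tfrac{1}{2}\!\int_0^t\!\!\int_\dR u(s,x)\psi_N(x)\,dxds.
\end{equation*}
Using the pointwise bound $\phi_N^2 \geq (x-R)^2 \1_{[R,R+N]}$, BDG, and subadditivity of $y\mapsto y^\beta$, the $\beta/2$-th moment of the truncated quadratic variation is bounded by the target $\dE[(\int_0^\tau u(s,R)ds)^\beta]$, plus an error from the $\psi_N$-term (controlled via Jensen and Lemma \ref{lem:limit of the time L1 norm}: since $|\psi_N|\leq C/N$ on a strip at distance $N$ from $R$, the bound $\dE[(\int_0^\tau\!\int|u\psi_N|\,dxds)^\beta] \leq (\tfrac{C}{N}\int_{R+N}^{R+2N}\dE[\int_0^\tau u(s,x)ds]\,dx)^\beta$ vanishes as $N\to\infty$), plus the residual term $\dE[\sup_{t\leq\tau}(u(t,\cdot),\phi_N)^\beta]$. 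Monotone convergence in $N$ then recovers the full integral $\int_R^\infty$ on the LHS.

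The main obstacle is the residual $\dE[\sup_{t\leq\tau}(u(t,\cdot),\phi_N)^\beta]$: the assumption $\|\Psi_a u\|_{C^\gamma}\leq H$ only delivers the pointwise bound $u(t,x) \leq H\cosh(ax)$, so $(u(t,\cdot),\phi_N)$ a priori grows exponentially in $N$. To close the estimate I would re-use the identity for $M^N$ to write $(u(t,\cdot),\phi_N) = \tfrac12\int_0^t u(s,R)ds + \tfrac12\int_0^t\int u\psi_N\,dxds + M^N_t$, and introduce the stopping time $\sigma_L := \inf\{t : (u(t,\cdot),\phi_N)>L\}$; since $(u(t,\cdot),\phi_N)$ is a.s.\ bounded for each fixed $N$, $\sigma_L \uparrow \infty$ a.s.\ as $L\to\infty$. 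On $[0,\tau\wedge\sigma_L]$ the residual term is bounded by $L^\beta$, and after inserting the identity into the BDG chain one can absorb the resulting $\sup_t(u,\phi_N)^\beta$ contribution (using $\beta<1$ and a careful bookkeeping of the BDG constant) to obtain an $L$-independent estimate in terms of the target and the $\psi_N$-error. Sending $L\to\infty$ by Fatou and $N\to\infty$ by monotone convergence yields \eqref{eq:estimation of quadratic variation}.
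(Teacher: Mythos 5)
Your setup — the test function $(x-R)^+$, the truncation $\phi_N$, the identity with martingale $M^N_t$ and the $\psi_N$-error, and the $N\to\infty$ passage — agrees with the paper's. The decisive step is different, however, and that is exactly where your proposal has a genuine gap.

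The obstacle you identify (controlling $\dE[\sup_{t\leq\tau}(u(t,\cdot),\phi_N)^\beta]$) is correctly diagnosed, but the proposed ``absorption via BDG'' does not close. Tracing your chain: the lower BDG bound gives $\dE[\langle M^N\rangle_\tau^{\beta/2}] \leq c_\beta^{-1}\dE[\sup_t|M^N_t|^\beta]$; the identity and subadditivity of $x\mapsto x^\beta$ give $\dE[\sup_t|M^N_t|^\beta] \leq \dE[\sup_t(u,\phi_N)^\beta] + \dE[A^\beta] + \mathrm{err}_N$ with $A = \int_0^\tau u(s,R)ds$; substituting the identity once more for $(u,\phi_N)$ and using the upper BDG bound produces $\dE[\sup_t(u,\phi_N)^\beta] \leq C_\beta\dE[\langle M^N\rangle_\tau^{\beta/2}] + \dE[A^\beta] + \mathrm{err}_N$. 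Plugging back yields
\begin{equation*}
\dE\bigl[\langle M^N\rangle_\tau^{\beta/2}\bigr] \;\leq\; c_\beta^{-1}C_\beta\,\dE\bigl[\langle M^N\rangle_\tau^{\beta/2}\bigr] + 2c_\beta^{-1}\bigl(\dE[A^\beta] + \mathrm{err}_N\bigr),
\end{equation*}
and absorption requires $c_\beta^{-1}C_\beta < 1$. But by the very form of the two-sided BDG inequality, the admissible constants satisfy $c_\beta \leq C_\beta$, so $c_\beta^{-1}C_\beta \geq 1$; there is no bookkeeping of universal BDG constants that breaks this. The stopping-time localization $\sigma_L$ only makes $\dE[\langle M^N\rangle^{\beta/2}]$ finite; it cannot repair the circularity.

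The paper closes the estimate by a different mechanism that you should use: it exploits the \emph{nonnegativity of $u$}, which is never invoked in your proposal. Since $u\geq 0$, the left side $(u(t,\cdot),\phi_N)\geq 0$, so the identity gives the one-sided bound $(\fM^N_t)_- \leq \int_0^t u(s,R)\,ds + |\fA^N_t|$ for all $t$. Combined with $\dE[(\fM^N_{\bar\tau})_+] = \dE[(\fM^N_{\bar\tau})_-]$ from optional stopping (and Fatou through a localizing sequence), this yields $\dE[|\fM^N_{\bar\tau}|] \leq 2\,\dE[\bar\fA^N_{\bar\tau}]$ for \emph{every} stopping time $\bar\tau\leq\tau$, where $\bar\fA^N$ is nondecreasing. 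One then applies the Lenglart domination inequality (\cite[Theorem~III.6.8]{diffusion}) — not BDG — to convert this first-moment domination at all stopping times into the $\beta$-th moment bound $\dE[\sup_{t\leq\tau}|\fM^N_t|^\beta] \leq C(\beta)\,\dE[\sup_{t\leq\tau}|\bar\fA^N_t|^\beta]$ for $\beta\in(0,1)$. Only after this does BDG enter, to pass from $|\fM^N|$ back to $\langle\fM^N\rangle^{1/2}$, and the $n\to\infty$ limit kills the $\fA^N$-error. The two ingredients you are missing are thus: (i) using $u\geq 0$ to obtain the one-sided martingale bound, and (ii) Lenglart domination in place of a circular BDG argument.
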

\begin{proof} The proof of this lemma follows the same arguments as those used in the proof of \cite[Lemma 4.1]{han2023compact} or \cite[Lemma 2.1]{krylov1997result}. However, for the sake of completeness, we provide a sketch of the proof here.

For each $n \geq 1$, let $\psi \in C^\infty_c(\dR)$ be a nonnegative even function with $\psi(0) = 1$, and define $\psi_n(x) := \psi((x-R)/n)$. Then, by choosing the test function $\phi(x) := (x-R)_+ \psi_n(x)$ in the definition of a weak solution $u$ (see \eqref{eq:sol_int_eq_form}), we obtain
\begin{equation}\label{eq:weak formulation of u with test function x psi}
  \begin{aligned}
    0\leq \int_{R}^\infty (x-R) \psi_n(x) u(t,x) dx = \int_0^t u(s,R) ds + \fA^n_t + \fM^n_t,
  \end{aligned}
\end{equation} where 
\begin{equation*}
  \begin{aligned}
    \fA^n_t : = \int_0^t \int_R^\infty u(s,x)\cdot \frac{1}{2}\partial^2_{xx}(x\psi_n(x)) dxds=\int_0^t \int_R^\infty u(s,x) \left[\frac{\psi'_n(x) + (1+x) \psi''_n(x)}{2}\right] dx,
  \end{aligned}
\end{equation*} and $\fM^n_t$ is a local martingale with quadratic variation
\begin{equation*}
  \begin{aligned}
    \langle \fM^n\rangle_t = \int_0^t \int_R^\infty (x-R)^2 \psi_n^2(x) \sigma^2(u(s,x)) dxds.
  \end{aligned}
\end{equation*} Note that $\phi$ belongs to $C_c(\dR)$, but not in $\cS$. However, \eqref{eq:weak formulation of u with test function x psi} can be justified by a simple approximation argument (see for instance \cite[Lemma 3.1]{krylov1997result}).  Observe that $|\psi'_n(x)| \lesssim n^{-1}$ and $|\psi''_n(x)| \lesssim n^{-2}$ for all $x\in\dR$. Thus, we have that almost surely
\begin{equation}\label{eq:limit of A^n}
  \begin{aligned}
    \lim_{n\to \infty } \sup_{t\leq T}|\fA^n_t| \lesssim \lim_{n\to \infty} \frac{1}{n}\left(  \sup_{t\leq T} \int_R^\infty (x-R) u(t,x) dx\right) =0. 
  \end{aligned}
\end{equation} Here we used the fact that the supremum of the integral on the RHS is finite owing to \cite[Lemma 5.1]{han2023compact}. We have obtained that for all $t\leq T$
\begin{equation*}
 0\leq \int_0^t u(s,R) ds + |\fA^n_t| + \fM^n_t. 
\end{equation*} This implies that for any stopping time $\bar{\eta}\leq \eta$,
\begin{equation*}
  (\fM^n_{\bar{\eta}})_- \leq \int_0^{\bar{\eta}} u(s,R) ds + |\fA^n_{\bar{\eta}}|. 
\end{equation*} Since $\fM^n_t$ is a local martingale, for any fixed $n$, we can choose a sequence of stopping times $\{\eta^i\}_{i=1}^\infty$ such that $\lim_{i\to \infty} \eta^i =\infty$ and $\fM^n_{t\wedge \eta^i}$ is a martingale on $[0,T]$ for each $i\in \dN$. Then, by the optional stopping time, $\fM^n_{t\wedge \eta^i \wedge \eta}$ is a martingale and $\dE [ \fM^n_{t\wedge \eta^i \wedge {\bar{\eta}}}]=0$ for any $i$ and $t\in[0,T]$. This shows that $\dE [ (\fM^n_{t\wedge \eta^i \wedge {\bar{\eta}}})_+]=\dE [ (\fM^n_{t\wedge \eta^i \wedge {\bar{\eta}}})_-]$. Since $u$ is nonnegative, we know that $\int_0^t u(s,R) ds + |\fA^n_t| $ is nondecreasing in $t\in[0,T]$, which ensures that 
\begin{equation*}
  \dE\left[|\fM^n_{ \eta^i \wedge \bar{\eta}}|  \right] \leq 2 \dE \left[ \bar{\fA}^n_{\bar{\eta}}\right],
\end{equation*} where 
\begin{equation*}
  \bar{\fA}^n_{\bar{\eta}} := \int_0^{\bar{\eta}} u(s,R) ds + |\fA^n_{\bar{\eta}}|. 
\end{equation*} By Fatou's lemma, we further obtain that 
\begin{equation*}
  \dE\left[|\fM^n_{ {\bar{\eta}}}|  \right] \leq 2 \dE \left[ \bar{\fA}^n_{\bar{\eta}} \right].
\end{equation*} Since ${\bar{\eta}}\leq \eta$ is arbitrary, we then apply \cite[Theorem III.6.8]{diffusion} to obtain that for any $\beta\in(0,1)$ and any stopping time $\eta\leq T$,
\begin{equation*}
  \dE\left[\sup_{\eta\leq T}  |\fM^n_\eta| ^\beta\right] \leq C_1 \dE \left[ \sup_{\eta\leq T} |\bar{\fA}^n_\eta|^\beta  \right]
\end{equation*} for some constant $C_1= C_1(\beta)>0$. Therefore, by the BDG inequality and letting $n\to \infty,$  there exists $C=C(\beta)>0$ such that 
\begin{equation*}
  \dE\left[\left( \int_0^\eta\int_R^\infty (x-R)^2|\sigma(u(s,x))|^2 dxds \right)^{\beta/2} \right]
\leq C \cdot \limsup_{n\to \infty} \dE \left[ \sup_{\eta\leq T} |\bar{\fA}^n_\eta|^\beta  \right]= C \dE\left[\left(\int_0^\eta u(s,R) ds\right)^\beta\right],
\end{equation*} where the last equality follows from \eqref{eq:limit of A^n}.
\end{proof}

We now present the proof of Lemma \ref{lem:key estimate for CSP}, which is analogous to Lemma 2.1 in \cite{krylov1997result}, as mentioned in Section \ref{sec: introduction}. However, our proof differs from Krylov's \cite{krylov1997result} since we consider a broader class of $\sigma$ than the ones in \cite{krylov1997result} (specifically, $\sigma(u)=u^\gamma$ for $\gamma \in (0, 1)$). In particular, we use a novel approach to derive \eqref{eq:controlling int of u(s,0)}, rather than applying Jensen's inequality as in \cite{krylov1997result}, which relies on partitioning the time interval where the solution remains not too small.

\begin{proof}[\textbf{Proof of Lemma \ref{lem:key estimate for CSP}}]

\textit{Proof of Lemma \ref{lem:key estimate for CSP} (i)}. We only prove that the probability of the LHS is bounded above by the one of the RHS since the proof of the other direction follows immediately from the fact that $u$ is continuous almost surely. Let $p,q\in(0,\fd\wedge 1 )$, $\beta \in (0,1)$ and $R>R_0\vee 1 $. Define a stopping time 
\begin{equation}
\label{eq:definition of kappa}
\kappa := \tau\wedge\inf\left\{ t\geq0: \int_0^t u(s,R) ds \geq q \right\}.
\end{equation}
By Chebyshev's inequality and \eqref{eq:estimation of quadratic variation} with the stopping time $\eta=\kappa$, we have
\begin{equation}
\label{eq:ineq to prove maximum principle}
\begin{aligned}
\dP&\left( \int_0^\tau \int_R^\infty (x-R)^2|\sigma(u(s,x)))|^2 dxds \geq p \right) 
\\&\leq  \dP\left( \kappa < \tau \right) + \dP\left( \int_0^\kappa \int_R^\infty (x-R)^2|\sigma(u(s,x)))|^2 dxds \geq p \right) \\
&\leq \dP\left( \int_0^\tau u(s,R) ds \geq q \right) + p^{-\beta/2}C\dE\left[\left(\int_0^\kappa  u(s,R)  ds\right)^{\beta}\right] \\
&\leq \dP\left( \int_0^\tau u(s,R) ds \geq q \right) + C\frac{q^\beta}{p^{\beta/2}}
\end{aligned}
\end{equation}
where $C = C(\beta)>0$ is the constant given in Lemma \ref{prop:estimation of quadratic variation} and we have used the definition of $\kappa$ in \eqref{eq:definition of kappa} in the second inequality. By letting $q\downarrow0$ and then $p\downarrow0$, we have
\begin{equation*}
\begin{aligned}
\dP\left( \int_0^\tau \int_R^\infty (x-R)^2|\sigma(u(s,x)))|^2 dxds >0  \right) 
\leq \dP\left( \int_0^\tau u(s,R) ds > 0 \right).
\end{aligned}
\end{equation*}
Thus,
\begin{equation*}
\begin{aligned}
\dP\left( \int_0^\tau u(s,R) ds = 0 \right) 
&\leq \dP\left( \int_0^\tau \int_R^\infty (x-R)^2|\sigma(u(s,x)))|^2 dxds = 0  \right) \\
&\leq\dP \left( u(s,x) =0 \quad \text{for all }s\in[0,\tau] \text{ and } x > R\right).
\end{aligned}
\end{equation*} The proof is completed.

\vspace{2mm}

\textit{Proof of Lemma \ref{lem:key estimate for CSP} (ii)}.  Let us fix $\beta \in (0,1)$, and $p,q\in(0,\fd^{1/\lambda} \wedge 1 )$. For $R>R_0\vee 1$,  we define  a stopping time $\kappa$ as 
\begin{equation}\label{eq:definition of kappa 2}
\kappa := \tau\wedge\inf\left\{ t\geq0: \int_0^t \sigma(u(s,R)) ds \geq q \right\},
\end{equation} which differs from the previous definition of $\kappa$ in \eqref{eq:definition of kappa}. Let $ r>0$ and note that 
\begin{equation*}
\begin{aligned}
&r^{-1}\int_{R+r}^{R+2r} \dP\left( \int_0^\tau \sigma(u(s,x)) ds \geq p \right)dx 
\leq  \dP(\kappa<\tau) + r^{-1}\int_{R+r}^{R+2r} \dP\left( \int_0^{\kappa}  \sigma(u(s,x)) ds \geq p \right)dx. \\
\end{aligned} 
\end{equation*} It follows from the definition of $\kappa$ (see \eqref{eq:definition of kappa 2}) that 
\begin{equation}
\label{eq:upper bound using kappa}
\begin{aligned}
r^{-1}&\int_{R+r}^{R+2r} \dP\left( \int_0^\tau \sigma(u(s,x)) ds \geq p \right)dx \\
&\leq  \dP\left(\int_0^\tau \sigma(u(s,R))ds \geq q  \right) + r^{-1}\int_{R+r}^{R+2r} \dP\left( \int_0^{\kappa}  \sigma(u(s,x)) ds \geq p \right)dx. 
\end{aligned} 
\end{equation} Then, by Chebyshev's inequality, \eqref{eq:estimation of quadratic variation} with $\eta=\kappa$, and Jensen's inequality, we have 
\begin{equation}
\label{eq:proof of main ineq}
\begin{aligned}
r^{-1}\int_{R+r}^{R+2r} &\dP\left( \int_0^{\kappa}  \sigma(u(s,x)) ds \geq p\right)dx \\
&\leq r^{-1}p^{-\beta}\int_{R+r}^{R+2r} \dE\left[\left(\int_0^\kappa \sigma(u(s,x)) ds  \right)^\beta\right] dx \\
&\leq  C r^{-\frac{3\beta}{2}}p^{-\beta} \dE\left[\left(\int_0^\kappa\int_R^\infty (x-R)^2|\sigma(u(s,x))|^2 dxds  \right)^{\beta/2}\right] \\
&\leq  C r^{-\frac{3\beta}{2}}p^{-\beta} \dE\left[\left(\int_0^\kappa u(s,R) ds  \right)^{\beta}\right].
\end{aligned} 
\end{equation} for some constant $C=C(T, \beta) >0$. Next, we claim that there exists $C=C(\gamma, H)>0$ such that for all $\omega\in \Omega$ satisfying \eqref{eq:bound for the Holder coefficient of u}, we have  
\begin{equation}
\label{eq:controlling int of u(s,0)}
\int_0^{\kappa}    u(s,R) ds  \leq C \frac{q^{1+\lambda}}{\sigma(q^\lambda)}.
\end{equation} If this claim holds, by substituting \eqref{eq:controlling int of u(s,0)} into \eqref{eq:proof of main ineq}, 
\begin{equation*}
  \begin{aligned}
    r^{-1}\int_{R+r}^{R+2r} \dP\left( \int_0^{\kappa}  \sigma(u(s,x)) ds \geq p\right)dx \leq C r^{-\frac{3\beta}{2}} \left(  \frac{q^{1+\lambda}}{p\sigma(q^\lambda)}\right)^\beta,
  \end{aligned}
\end{equation*}  where $C= C(\beta,\gamma, H,T )>0$. Therefore, from \eqref{eq:upper bound using kappa}, we can deduce that 
\begin{equation}\label{eq:inequality for mean value theorem}
\begin{aligned}
r^{-1}&\int_{R+r}^{R+2r} \dP\left( \int_0^\tau \sigma(u(s,x)) ds \geq p \right)dx \\
&\leq  \dP\left(\int_0^\tau \sigma(u(s,R))ds \geq q  \right) + C r^{-\frac{3\beta}{2}} \left(  \frac{q^{1+\lambda}}{p\sigma(q^\lambda)}\right)^\beta.
\end{aligned} 
\end{equation} Applying the mean value theorem into \eqref{eq:inequality for mean value theorem}, we find a $x\in(r,2r)$ to complete the proof of \eqref{eq:ito inequality for CSP}.

For the remainder of the proof, we will prove the claim \eqref{eq:controlling int of u(s,0)}. Recall that  $q<\fd^{1/\lambda} \wedge 1$ and $\lambda = \gamma/(1+\gamma)\in (0,1)$. Therefore, by Assumption \ref{assumption:basic condition for sigma}, 
\begin{equation}\label{eq:bound for u smaller than q^lambda}
\begin{aligned}
\int_0^{\kappa} u(s,R)\1_{u(s,R)\leq q^\lambda} ds 
\leq \frac{q^\lambda}{\sigma(q^\lambda)}\int_0^{\kappa} \sigma(u(s,R)) ds \leq \frac{q^{1+\lambda}}{\sigma(q^\lambda)}.
\end{aligned}
\end{equation}
Additionally, by \eqref{eq:definition of kappa} and the fact that $u\mapsto \sigma(u)$ is nondecreasing, we have
\begin{equation*}
\begin{aligned}q&\geq \int_0^{\kappa} \sigma(u(s,R)) \1_{u(s,R) > q^\lambda} ds 
\geq\sigma(q^\lambda)\int_0^{\kappa}  \1_{u(s,R) > q^\lambda} ds.
\end{aligned}
\end{equation*}
Therefore, we obtain
\begin{equation}
\label{eq:controlling int of u(s,0)2}
\int_0^{\kappa} \1_{u(s,R) > q^\lambda} ds \leq \frac{q}{\sigma(q^\lambda)}.
\end{equation}On the other hand, by the continuity of $u$, we can decompose the open set $\{s\in (0,\kappa) : u(s,R) >q^\lambda\}$ as 
\begin{equation*}
\begin{aligned}
\left\{ s \in (0,\kappa): u(s,R) > q^\lambda \right\} = \dot\cup_{j}(s_q^{j},t_q^{j}), \\
\end{aligned}
\end{equation*} where $\dot\cup$ denotes the disjoint union. Observe that again by the continuity of $u(\cdot,R)$, we have $u(s_q^j,R) = q^\lambda$. We employ \eqref{eq:bound for the Holder coefficient of u} to proceed as 
\begin{equation*}
  \begin{aligned}
    \int_0^{\kappa} u(s,R) \1_{u(s,R) > q^\lambda} ds
&= \sum_{j}\int_{s_q^j}^{t_q^{j}} u(s,R) ds \\
&= \sum_{j}\left[ \int_{s_q^j}^{t_q^{j}} u(s,R) - u(s_q^j,R) ds + \int_{s_q^j}^{t_q^{j}} u(s_q^j,R) ds \right] \\
&\leq C\sum_{j} \int_{s_q^j}^{t_q^{j}} |s - s_q^j|^\gamma ds + q^\lambda\int_0^{\kappa} \1_{u(s,R)>q^\lambda}  ds  \\
&\leq C\sum_{j}  |t_q^j - s_q^j|^{1+\gamma} + q^\lambda\int_0^{\kappa} \1_{u(s,R)>q^\lambda}  ds.
  \end{aligned}
\end{equation*} for some constant $C=C(\gamma,H)>0$. Now apply \eqref{eq:controlling int of u(s,0)2} several times to yield 
\begin{equation}\label{eq:bound for u bigger than q^lambda}
\begin{aligned}
\int_0^{\kappa} u(s,R) \1_{u(s,R) > q^\lambda} ds
&\leq C\sum_{j}  \left| \int_{s_q^j}^{t_q^j} ds \right|^{1+\gamma} + \frac{q^{1+\lambda}}{\sigma(q^\lambda)} \\
&\leq C \left| \sum_{j} \int_{s_q^j}^{t_q^j} ds \right|^{1+\gamma} + \frac{q^{1+\lambda}}{\sigma(q^\lambda)}  \\
&\leq C \left|  \int_0^{\kappa} \1_{u(s,R)>q^\lambda} ds \right|^{1+\gamma} + \frac{q^{1+\lambda}}{\sigma(q^\lambda)}  \\
&\leq C \left(\frac{q}{\sigma(q^\lambda)}\right)^{1+\gamma} + \frac{q^{1+\lambda}}{\sigma(q^\lambda)}. \\
\end{aligned}
\end{equation} Recalling that $\lambda= \gamma/(1+\gamma)$, we see that $1-\lambda \gamma^{-1} > \lambda $, which in turn implies that 
\begin{equation}\label{eq:upper bound using lambda}
  \left(\frac{q}{\sigma(q^\lambda)}\right)^{1+\gamma} = \frac{q^{1+\lambda}}{\sigma(q^\lambda)} \left( \frac{q^{1-\lambda \gamma^{-1}}}{\sigma(q^\lambda)}\right)^\gamma \leq \frac{q^{1+\lambda}}{\sigma(q^\lambda)} \left( \frac{q^{\lambda}}{\sigma(q^\lambda)}\right)^\gamma,
  \end{equation} where we used that $q<1$. Moreover, due to the monotonicity of $u\mapsto u/\sigma(u)$, we can find a constant $C_1=C_1(\gamma)>0$ such that for all $q\in(0,\fd^{1/\lambda}\wedge 1 )$ 
  \begin{equation*}
  \left( \frac{q^{\lambda}}{\sigma(q^\lambda)}\right)^\gamma \leq C_1.
  \end{equation*} Therefore, by \eqref{eq:upper bound using lambda}, we have shown that 
  \begin{equation*}
    \left(\frac{q}{\sigma(q^\lambda)}\right)^{1+\gamma} \leq C_1 \frac{q^{1+\lambda}}{\sigma(q^\lambda)}.
  \end{equation*} Combining the above inequality with \eqref{eq:bound for u bigger than q^lambda}, we prove that there exists $C=C(\gamma, H)$ such that 
\begin{equation}\label{eq:bound for u bigger than q^lambda 2}
  \int_0^\kappa u(s,R)\1_{u(s,R)>q^\lambda} ds \leq C \frac{q^{1+\lambda}}{\sigma(q^\lambda)}.
\end{equation} By \eqref{eq:bound for u smaller than q^lambda} and \eqref{eq:bound for u bigger than q^lambda 2}, we can conclude that \eqref{eq:controlling int of u(s,0)} holds uniformly over all $q\in(0,\fd^{1/\lambda}\wedge 1 )$. 
\end{proof}

%=====================================================================

%%%%%%%%%%%%%%%%%%%%%%%%%%%%%%%%%%%%%%%%%%%%%%%%%%%%%%%%%%%%%%%%%%%
%%                                                               %%
%% Supplementary Material, if any, should be provided in         %%
%% {supplement} environment  with title and short description.   %%
%%                                                               %%
%%%%%%%%%%%%%%%%%%%%%%%%%%%%%%%%%%%%%%%%%%%%%%%%%%%%%%%%%%%%%%%%%%%

%%%%%%%%%%%%%%%%%%%%%%%%%%%%%%%%%%%%%%%%%%%%%%%%%%%%%%%%%%%%%%%%%%%
%%                                                               %%
%% Use the two commands below for producing your bibliography    %%
%% with bibtex, then comment again the commands and include the  %%
%% content of the .bbl file in this file below the commands.     %%
%%                                                               %%
%%%%%%%%%%%%%%%%%%%%%%%%%%%%%%%%%%%%%%%%%%%%%%%%%%%%%%%%%%%%%%%%%%%

%%%%%%%%%%%%%%%%%%%%%%%%%%%%%%%%%%%%%%%%%%%%%%%%%

% \bibliographystyle{alpha}		
%  \bibliography{refs}

\begin{thebibliography}{99}

\bibitem{burdzy2010nonuniqueness}
Burdzy, K., Mueller, C. and Perkins, E. A.: Nonuniqueness for nonnegative solutions of parabolic stochastic
  partial differential equations. \emph{Illinois J. Math.} \textbf{54}, (2010), 1481--1507. MR{2981857}

\bibitem{chen2014holder}
Chen, L. and Dalang, R. C.:  H{\"o}lder-continuity for the nonlinear stochastic heat equation with
  rough initial conditions. \emph{Stoch. Partial Differ. Equ. Anal. Comput.} \textbf{2}, (2014), 316--352. MR{3255231}

\bibitem{le2023superlinear}
Chen, L. and Huang, J.: Superlinear stochastic heat equation on $R^d$. \emph{Proc. Amer. Math. Soc.} \textbf{151}, (2023), 4063--4078. MR{4607649}

\bibitem{chen2017comparison}
Chen, L. and Kim, K.: On comparison principle and strict positivity of solutions to the
  nonlinear stochastic fractional heat equations. \emph{Ann. Inst. Henri Poincar\'e Probab. Stat.} \textbf{53}, (2017), 358--388. MR{3606745}

\bibitem{chen1995finite}
Chen, X-Y., Matano, H. and Mimura, M.: Finite-point extinction and continuity of interfaces in a nonlinear
  diffusion equation with strong absorption. \emph{J. Reine Angew. Math.} \textbf{459}, (1995), 1--36. MR{1319515}

\bibitem{dalang1999extending}
Dalang, R.~C.: Extending the martingale measure stochastic integral with
  applications to spatially homogeneous S.P.D.E.'s. \emph{Electron. J. Probab.} \textbf{4}, (1999). MR{1684157}

\bibitem{da2014stochastic}
Da Prato, G. and Zabczyk, J.: \emph{Stochastic equations in infinite dimensions}, Cambridge university press, 2014.

\bibitem{evans1979instantaneous}
Evans, L. C. and Knerr, B. F.: Instantaneous shrinking of the support of nonnegative solutions to
  certain nonlinear parabolic equations and variational inequalities. \emph{Illinois J. Math.} \textbf{23}, (1979), 153--166. MR{0516577}

\bibitem{galaktionov1994extinction}
Galaktionov, V. A.  and Vazquez, J. L.: Extinction for a quasilinear heat equation with absorption. I.
  technique of intersection comparison. \emph{Comm. Partial Differential Equations} \textbf{19}, (1994), 1075--1106. MR{1284802}

\bibitem{han2023compact}
Han, B-S., Kim, K. and Yi, J.: The compact support property for solutions to the stochastic partial
  differential equations with colored noise. \emph{SIAM J. Math. Anal.} \textbf{55}, (2023), 7665--7703, 2023. MR{4665042}

\bibitem{iscoe1988supports}
Iscoe, I.: On the supports of measure-valued critical branching brownian motion.
\emph{Ann. Probab.} \textbf{16}, (1988) 200--221. MR0920265

\bibitem{kalashnikov1974propagation}
Kalashnikov, A. S.: The propagation of disturbances in problems of non-linear heat
  conduction with absorption. \emph{USSR Comput. Math. Math. Phys.} \textbf{14}, (1974), 70--85.

\bibitem{khoshnevisan2014analysis}
Khoshnevisan, D.: \emph{Analysis of stochastic partial differential equations}, volume
  119, American Mathematical Soc., 2014.

\bibitem{knerr1979behavior}
Knerr, B. F.: The behavior of the support of solutions of the equation of nonlinear
  heat conduction with absorption in one dimension. \emph{Trans. Amer. Math. Soc.} \textbf{249}, (1979), 409--424. MR{0525681}

\bibitem{diffusion}
Krylov, N. V.: \emph{Introduction to the theory of diffusion processes.} Providence, 1995.

\bibitem{krylov1997result}
Krylov, N. V.: On a result of C. Mueller and E. Perkins. \emph{Probab. Theory Related Fields} \textbf{108}, (1997), 543--557. MR{1465641}

\bibitem{mueller1992compact}
Mueller, C. and Perkins, E. A.: The compact support property for solutions to the heat equation with
  noise. \emph{Probab. Theory Related Fields} \textbf{93}, (1992), 325--358. MR{1180704}

\bibitem{mytnik2011pathwise}
Mytnik, L. and Perkins,  E. A.: Pathwise uniqueness for stochastic heat equations with H{\"o}lder
  continuous coefficients: the white noise case. \emph{Probab. Theory Related Fields} \textbf{149}, (2011), 1--96. MR{2773025} 

\bibitem{mytnik2006pathwise}
Mytnik, L., Perkins,  E. A. and Sturm, A.: On pathwise uniqueness for stochastic heat equations with
  non-lipschitz coefficients. \emph{Ann. Probab} \textbf{34}, (2006), 1910--1959. MR{2271487}

\bibitem{mueller1991support}
Mueller, C.: On the support of solutions to the heat equation with noise.\emph{Stochastics Stochastics Rep} \textbf{37}, (1991), 225--245. MR{1149348}

\bibitem{shiga1994two}
Shiga, T.: Two contrasting properties of solutions for one-dimensional
  stochastic partial differential equations.  \emph {Canad. J. Math.} \textbf{46}, (1994), 415--437. MR{1271224}

\bibitem{walsh1986introduction}
Walsh, J. B.: An introduction to stochastic partial differential equations. In {\emph {\'E}cole d'{\'E}t{\'e} de Probabilit{\'e}s de Saint Flour  XIV-1984}, 265--439. Springer, 1986.

\bibitem{ye2007generalized} 
 Ye, H., Gao, J. and Ding, Y.: A generalized Gronwall inequality and its application to a fractional
  differential equation. \emph{J. Math. Anal. Appl.}  \textbf{328}, (2007), 1075--1081. MR{2290034} 

% \bibitem{doob} Doob, J. L.: Heuristic approach to the Kolmogorov-Smirnov
%   theorems. \emph{Ann. Math. Statistics} \textbf{20}, (1949), 393--403.
%   MR{0030732}

% \bibitem{gnekol} Gnedenko, B. V. and Kolmogorov, A. N.: Limit distributions for
%   sums of independent random variables. Translated and annotated by K. L.
%   Chung. With an Appendix by J. L. Doob. \emph{Addison-Wesley}, Cambridge,
%   1954. ix+264 pp. MR{0062975}

% \bibitem{ito} It\^o, K.: Multiple Wiener integral. \emph{J. Math. Soc. Japan}
%   \textbf{3}, (1951), 157--169. MR{0044064}

% \bibitem{levy} L\'evy, P.: Sur certains processus stochastiques homog\`enes.
%   \emph{Compositio Math.} \textbf{7}, (1939), 283--339. MR{0000919}

% \bibitem{grisha} Perelman, G.: The entropy formula for the Ricci flow and its
%   geometric applications, \ARXIV{math.DG/0211159}

% \bibitem{smisch} Smirnov, S. and Schramm, O.: On the scaling limits of planar
%   percolation, \ARXIV{1101.5820}

\end{thebibliography}

\begin{small}

\noindent\textbf{Beom-Seok Han} [\texttt{b\_han@sungshin.ac.kr}]\\
\noindent Sungshin Women's University, Seoul, South Korea\\

\noindent\textbf{Kunwoo Kim} [\texttt{kunwoo@postech.ac.kr}]\\
\noindent Pohang University of Science and Technology (POSTECH), Pohang, Gyeongbuk, South Korea \\

\noindent\textbf{Jaeyun Yi} [\texttt{jaeyun.yi@epfl.ch}]\\
\noindent \'Ecole Polytechnique F\'ed\'erale de Lausanne (EPFL), Lausanne, Switzerland

\end{small}

\end{document}